\documentclass[11pt]{article}
\usepackage{amssymb}
\usepackage{amsmath}
\usepackage{amsthm}
\usepackage{latexsym}
\usepackage[english]{babel}

\usepackage{amsmath,amssymb,amscd,enumerate,eucal,bm}
\usepackage{graphicx}
\usepackage{epstopdf}
\usepackage{amsmath}
\usepackage{amsfonts}
\usepackage{amssymb}
\usepackage{amsmath}
\usepackage{amsthm}
\usepackage{latexsym}
\usepackage{graphicx}
\usepackage{epstopdf}
\usepackage{float}
\usepackage{caption}
\usepackage{subfigure}
\usepackage{tikz}
%\usetikzlibrary{matrix,arrows}
\usetikzlibrary{matrix,arrows,decorations.pathmorphing}

%\usepackage{hyperref}

%``lower''

%\linespread{1.5}

\topmargin -0.5 cm \oddsidemargin .3 cm

\textwidth 15.5 cm

\textheight 22.0 cm

\numberwithin{equation}{section}

\newcommand{\R}{\mathbb{R}}

\newcommand{\N}{\mathbb{N}}

\newcommand{\U}{U_\infty}
\newcommand{\Ps}{\Psi_\infty}

\newtheorem{theorem}{Theorem}[section]

\newtheorem{corollary}[theorem]{Corollary}

\newtheorem{lemma}[theorem]{Lemma}

\newtheorem{proposition}[theorem]{Proposition}
\newtheorem{remark}[theorem]{Remark}

\begin{document}

\title{Characterization of the Palais-Smale sequences for the conformal Dirac-Einstein problem and applications}

\author{Ali Maalaoui$^{(1)}$ \& Vittorio Martino$^{(2)}$}
\addtocounter{footnote}{1}
\footnotetext{Department of mathematics and natural sciences, American University of Ras Al Khaimah, PO Box 10021, Ras Al Khaimah, UAE. E-mail address:
{\tt{ali.maalaoui@aurak.ac.ae}}}
\addtocounter{footnote}{1}
\footnotetext{Dipartimento di Matematica, Universit\`a di Bologna,
piazza di Porta S.Donato 5, 40127 Bologna, Italy. E-mail address:
{\tt{vittorio.martino3@unibo.it}}}

\date{}
\maketitle

\vspace{5mm}

{\noindent\bf Abstract} {\small In this paper we study the Palais-Smale sequences of the conformal Dirac-Einstein problem. After we characterize the bubbling phenomena, we prove an Aubin type result leading to the existence of a positive solution. Then we show the existence of infinitely many solutions to the problem provided that the underlying manifold exhibits certain symmetries.}
\vspace{8mm}

\noindent
{\small Keywords: Dirac-Einstein equation, bubbling phenomena, critical exponent.}

\vspace{4mm}

\noindent
{\small 2010 MSC. Primary: 58J05, 58E15.  Secondary: 53A30, 58Z05.}

\noindent

\vspace{4mm}
\tableofcontents

\noindent

%%%%%%%%%%%%%%%%%%%%%%%%%%%%%%%%%%%%%%%%%%%%%%%%%%%%%%%%%%%%%%%%%%%%%%%%%%%%%%%%%%%%%%%%%%%%%%%%%%%%%%%%%%%%%%%%%%%%%%%%%%%%
%%%%%%%%%%%%%%%%%%%%%%%%%%%%%%%%%%%%%%%%%%%%%%%%%%%%%%%%%%%%%%%%%%%%%%%%%%%%%%%%%%%%%%%%%%%%%%%%%%%%%%%%%%%%%%%%%%%%%%%%%%%%
%%%%%%%%%%%%%%%%%%%%%%%%%%%%%%%%%%%%%%%%%%%%%%%%%%%%%%%%%%%%%%%%%%%%%%%%%%%%%%%%%%%%%%%%%%%%%%%%%%%%%%%%%%%%%%%%%%%%%%%%%%%%

\section{Introduction and statement of the results}

\noindent
We start by recalling the super-symmetric model consisting of coupling gravity with fermionic interaction. We fix a three dimensional closed (compact, without boundary) manifold $M$, then we define the energy functional $\mathcal{E}$ of this model by
$$\mathcal{E}(g,\psi)=\int_{M}R_{g}dv_{g}+\int_{M}\langle D_{g}\psi,\psi \rangle-\langle \psi,\psi\rangle  dv_{g},$$
where $g$ is a Riemannian metric on $M$, $\psi$ is a spinor in the spin bundle $\Sigma M$ on $M$, $R_g$ is the scalar curvature, $D_{g}$ is the Dirac operator and $\langle \cdot, \cdot \rangle$ is the compatible Hermitian metric on $\Sigma M$; we will give the precise definitions in the next section. The functional $\mathcal{E}$ generalizes the classical Hilbert-Einstein functional and it is invariant under the group of diffeomorphisms of $M$ as well; we address the reader to \cite{Belg, Fin, Kim}, where it was introduced and studied.\\
As in the classical case of the Hilbert-Einstein functional, since the group of diffeomorphisms is usually big, in first instance we restrict the functional to a fixed conformal class, namely given a Riemannian metric $g$, we set
$$[g]=\left\{u^{4}g;\; u\in C^{2,\alpha}(M)\right\} .$$
In this way, the energy functional reads as
\begin{equation}
E(u,\psi)=\frac{1}{2}\left(\int_{M}u L_{g}u+\langle D_{g} \psi,\psi \rangle -|u|^{2}|\psi|^{2} dv_{g}\right),
\end{equation}
where $L_{g}$ is the conformal Laplacian of the metric $g$.\\
This energy functional can also be seen as the three dimensional version of the super-Liouville equation investigated in \cite{J1,J2}, which is fundamental in the study of string fermions, see \cite{Fuk}.\\
By the first variation of the functional $E$, we see that its critical points satisfy the coupled system
\begin{equation}\label{el}
\left\{\begin{array}{ll}
L_{g} u=|\psi|^{2}u\\
& \text{on } M .\\
D_{g}\psi=|u|^{2}\psi
\end{array}
\right.
\end{equation}
Since the functional is conformally invariant, one expects compactness to be violated for this problem; moreover, due to the presence of the Dirac operator, it is strongly indefinite. For the later part, the authors studied an effective method, based on a homological approach \cite{M,MV,MV1}, for general functionals with this feature of being strongly indefinite; here we will focus on the first issue, that is the lack of compactness.\\

\noindent
We recall that a $C^{1}$ function $F$ satisfies the Palais-Smale condition (PS) if: for any sequence $x_{k}$ such that $F(x_{k})\to c$ and $\nabla F(x_{k})\to 0$ (such a sequence is then called a (PS) sequence), there exists a converging subsequence.\\
The (PS) condition is fundamental in the study of problems with variational structure as min-max theorems or Morse type methods, which rely heavily on this condition since it guaranties the convergence of the deformation flow. In several geometric problems though, such condition is violated, mainly because of the conformal invariance. We recall the widely investigated cases of prescribing curvatures as the Yamabe problem or the $Q$-curvature problem (see for instance \cite{L,B,Br,Ben,Ben2} and the references therein). In the previously stated problems, the lack of compactness is well understood and there is a specific characterization for the (PS) sequences. The first result in this paper concerns the study and the characterization of the (PS) sequences of the functional $E$, in particular we will show the following:
\begin{theorem}\label{first}
Let us assume that $M$ has a positive Yamabe constant $Y_g(M)$ and let $(u_{n},\psi_{n})$ be a Palais-Smale sequence for $E$ at level $c$. Then there exist $u_{\infty}\in C^{2,\alpha}(M)$, $\psi_{\infty}\in C^{1,\beta}(\Sigma M)$ such that $(u_{\infty},\psi_{\infty})$ is a solution of $(\ref{el})$, $m$ sequences of points $x_{n}^{1},\cdots, x_{n}^{m} \in M$ such that $\lim_{n\to \infty}x_{n}^{k}= x^{k}\in M$, for $k=1,\dots,m$ and $m$ sequences of real numbers $R_{n}^{1},\cdots, R_{n}^{m}$ converging to zero, such that:
\begin{itemize}
\item[i)]   $\displaystyle u_{n}=u_{\infty}+\sum_{k=1}^{m} v_{n}^{k}+o(1)$  in  $H^{1}(M)$,
\item[ii)]  $\displaystyle \psi_{n}=\psi_{\infty}+\sum_{k=1}^{m}\phi_{n}^{k}+o(1)$ in $H^{\frac{1}{2}}(\Sigma M)$,
\item[iii)] $\displaystyle E(u_{n},\psi_{n})=E(u_{\infty},\psi_{\infty})+\sum_{k=1}^{m}E_{\mathbb{R}^{3}}(U_\infty^{k},\Psi_\infty^{k})+o(1)$,
\end{itemize}
where
$$v_{n}^{k}=(R_{n}^{k})^{-\frac{1}{2}}\beta_{k}\sigma_{n,k}^{*}(U_\infty^{k}) ,$$
$$\phi_{n}^{k}=(R_{n}^{k})^{-1}\beta_{k}\sigma_{n,k}^{*}(\Psi_\infty^{k}) ,$$
with $\sigma_{n,k}=(\rho_{n,k})^{-1}$ and $\rho_{n,k}(\cdot)=exp_{x_{n}^k}(R_{n}^k \cdot)$ is the exponential map defined in a suitable neighborhood of $\R^{3}$.
Also, here $\beta_{k}$ is a smooth compactly supported function, such that $\beta_{k}=1$ on $B_{1}(x^{k})$ and $supp(\beta_{k})\subset B_{2}(x^{k})$ and $(U_\infty^{k},\Psi_\infty^{k})$ are solutions to our equations (\ref{el}) on $\mathbb{R}^3$ with its Euclidian metric $g_{\R^3}$.
\end{theorem}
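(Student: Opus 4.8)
The plan is to run the standard concentration-compactness / bubbling induction adapted to the coupled Dirac-Einstein system, following the template of the Yamabe and Brezis-Coron analyses but keeping track of the spinorial component. First I would establish that any (PS) sequence $(u_n,\psi_n)$ is bounded in $H^1(M)\times H^{1/2}(\Sigma M)$: testing $\nabla E(u_n,\psi_n)\to 0$ against $(u_n,0)$ and $(0,\psi_n)$ and combining with $E(u_n,\psi_n)\to c$, using the positivity of the Yamabe constant $Y_g(M)$ to control $\int u_n L_g u_n$ from below by $\|u_n\|_{H^1}^2$, and the invertibility of $D_g$ on the relevant subspace (or a spectral splitting $\psi_n=\psi_n^+ +\psi_n^-$) to control the spinor. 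The coupling term $\int |u_n|^2|\psi_n|^2$ is the dangerous one; I would absorb it using the Sobolev embeddings $H^1(M)\hookrightarrow L^6(M)$ and $H^{1/2}(\Sigma M)\hookrightarrow L^3(\Sigma M)$, which is exactly the critical/borderline case in dimension $3$, so boundedness should follow but with some care.

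Next, passing to a weak limit $(u_\infty,\psi_\infty)$, elliptic regularity for the decoupled limiting equations gives $u_\infty\in C^{2,\alpha}(M)$, $\psi_\infty\in C^{1,\beta}(\Sigma M)$, and one checks it solves $(\ref{el})$ by weak continuity of the nonlinearities away from concentration points. Then I would set $(u_n^1,\psi_n^1):=(u_n-u_\infty,\psi_n-\psi_\infty)$ and show it is a (PS) sequence for the ``limit functional'' at level $c-E(u_\infty,\psi_\infty)$, using a Brezis-Lieb type lemma for both the quadratic and the quartic coupling terms. If this sequence converges strongly to zero we are done with $m=0$; otherwise energy does not vanish, and I would locate a concentration point: choose $x_n^1$ and a scale $R_n^1\to 0$ via a Lions-type concentration function $Q_n(r)=\sup_x \int_{B_r(x)}(|\nabla u_n^1|^2+|\psi_n^1|^3+\dots)$ normalized at a fixed small level. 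Rescaling by the exponential map $\rho_{n,1}(x)=\exp_{x_n^1}(R_n^1 x)$ and using that $(R_n^1)^{-1/2}$, $(R_n^1)^{-1}$ are the conformal weights that make $L_g$ and $D_g$ scale correctly (the metric $(R_n^1)^{-2}\rho_{n,1}^* g\to g_{\R^3}$ in $C^2_{loc}$), the rescaled pair converges in $H^1_{loc}\times H^{1/2}_{loc}$ to a nontrivial solution $(U_\infty^1,\Psi_\infty^1)$ of $(\ref{el})$ on $(\R^3,g_{\R^3})$. Subtracting the cut-off bubble $v_n^1=(R_n^1)^{-1/2}\beta_1\sigma_{n,1}^*(U_\infty^1)$, $\phi_n^1=(R_n^1)^{-1}\beta_1\sigma_{n,1}^*(\Psi_\infty^1)$ and re-applying Brezis-Lieb drops the energy by the quantized amount $E_{\R^3}(U_\infty^1,\Psi_\infty^1)$, which is bounded below by a universal positive constant; hence the induction terminates after finitely many steps $m$, yielding i), ii), iii) with the error $o(1)$.

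The main obstacle, I expect, is handling the spinorial bubbling rigorously: unlike the scalar Yamabe case, $D_g$ is a first-order operator, it is \emph{not} bounded below (the functional is strongly indefinite in $\psi$), and $H^{1/2}$ is a fractional space, so the usual ``iterate Rellich compactness'' argument needs the right functional-analytic setup — in particular a uniform control of the negative spectral part of $\psi_n$ and an $\epsilon$-regularity / removable-singularity statement for the Dirac equation with $L^3$ potential to guarantee the limit bubble is genuinely a finite-energy solution on $\R^3$ (and that it is not killed by the cut-off). A secondary technical point is proving the bubbles separate (either $R_n^k/R_n^\ell\to 0$ or the centers $x_n^k$ diverge at the rescaled scale) so that the cross terms among distinct bubbles, and between bubbles and $(u_\infty,\psi_\infty)$, are $o(1)$ in both $H^1$ and $H^{1/2}$ and in the energy; this requires the interaction estimates for the coupled quartic term $|u|^2|\psi|^2$, which mixes the two components and is slightly more delicate than the purely scalar interaction integrals. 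Once the $\epsilon$-regularity for the coupled system and these interaction estimates are in place, the finiteness of $m$ follows from the energy quantization, exactly as in the classical references \cite{Ben,Br}.
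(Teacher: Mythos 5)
Your proposal follows essentially the same route as the paper: boundedness via the spectral splitting $\psi_n=\psi_n^++\psi_n^-$ and the identity $2E(z_n)-\langle dE(z_n),z_n\rangle=\int_M|u_n|^2|\psi_n|^2\,dv_g$, weak limit plus regularity, a Brezis--Lieb type splitting, an $\epsilon$-regularity lemma, a Lions concentration function (the paper normalizes $\sup_x\int_{B_r(x)}|u_n|^2|\psi_n|^2\,dv_g$, equivalent to your choice), conformal rescaling with weights $R_n^{-1/2}$ and $R_n^{-1}$, and termination of the induction via the universal lower bound $E_{\R^3}(U,\Psi)\geq c_3$. The only point worth noting is that the paper extracts bubbles one at a time, so the explicit bubble-separation estimates you anticipate are not needed; each subtracted sequence is simply shown to be a new (PS) sequence converging weakly to zero, and the argument is re-applied.
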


\begin{remark}
The assumption on $M$ of having a positive Yamabe constant implies in particular that there are no harmonic spinors, namely the Dirac operator $D_g$ has no kernel: this will be used in the proof. In fact, by conformal invariance of the Dirac operator, the vanishing of the kernel is preserved by conformal change. So if the Yamabe constant is positive, then the conformal class of the metric contains a metric with positive scalar curvature, hence using the Schr\"{o}dinger-Lichnerowicz formula for this last metric and denoting by $\Delta_{\Sigma}$ the connection Laplacian, we have that
$$D_{g}^{2}=-\Delta_{\Sigma}+\frac{R_g}{4},$$
which implies the vanishing of the kernel of $D_g$.
\end{remark}

\noindent
Here $H^{1}(M)$ and $H^{\frac{1}{2}}(\Sigma M)$ are suitable Sobolev spaces on which the functional $E$ is well defined (see next section).\\

\noindent
Now, for non-trivial  $(u,\psi) \in H^1(M) \times H^{\frac{1}{2}}(\Sigma M)$, we define the functionals
$$\tilde{E}(u,\psi)=\frac{\left(\displaystyle\int_{M}uL_{g}udv_{g}\right)\left(\displaystyle\int_{M}\langle D_g \psi,\psi\rangle dv_{g}\right)}{\displaystyle\int_{M}|u|^{2}|\psi|^{2}dv_{g}} ,\quad I(\psi)=\frac{\displaystyle\int_{M}\langle D_g \psi,\psi\rangle dv_{g}}{\displaystyle\int_{M}|u|^{2}|\psi|^{2}dv_{g}}.$$
Also, we let $P^{-}$ be the projector on $H^{\frac{1}{2},-}$ (the negative space of $H^{\frac{1}{2}}(\Sigma M)$ according to the splitting given by the eigenspinors of the Dirac operator), so that for a given $\psi \in H^{\frac{1}{2}}(\Sigma M)$
$$P^{-}(\psi)=0 \Longleftrightarrow \int_{M}\langle \psi,\varphi \rangle dv_{g}, \quad \forall \varphi\in H^{\frac{1}{2},-} .$$
As for the Yamabe problem, we define a conformal constant
$$\tilde{Y}_g (M)=\inf\left\{\begin{array}{ll}
\tilde{E}(u,\psi); \text{ for  $(u,\psi)\in H^{1}(M)\setminus\{0\}\times H^{\frac{1}{2}}(\Sigma M)\setminus \{0\}$ s.t. }  I(\psi)> 0, \\
 P^{-}\left(D_g \psi-I(\psi)u^{2}\psi\right) =0\end{array}  \right\} .$$
Indeed, we first recognize that the constant $\tilde{Y}_g (M)$ only depends on the conformal class of the metric $g$, then we have an Aubin type result by comparing $\tilde{Y}_g (M)$ with the invariants on the sphere $S^3$ with its standard metric $g_0$. In particular we will show the following
\begin{theorem}\label{second}
It holds:
$$Y_g(M)\lambda^{+}_g(M)\leq \tilde{Y}_g(M) \leq Y_{g_0}(S^{3})\lambda^{+}_{g_0}(S^{3})=\tilde{Y}_{g_0}(S^{3}) .$$
Moreover, if
$$\tilde{Y}_g(M) < \tilde{Y}_{g_0}(S^{3}) ,$$
then problem $(\ref{el})$ has a non-trivial ground state solution.
\end{theorem}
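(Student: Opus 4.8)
The plan is to prove Theorem~\ref{second} in two parts: first the chain of inequalities for the constants, then the existence of a ground state under the strict inequality.

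For the \emph{left inequality} $Y_g(M)\lambda^+_g(M)\le \tilde Y_g(M)$, I would start from an admissible pair $(u,\psi)$ in the definition of $\tilde Y_g(M)$ and estimate $\tilde E(u,\psi)$ from below. The factor $\int_M uL_g u\,dv_g$ is controlled below by $Y_g(M)\|u\|_{L^6}^2$ by definition of the Yamabe invariant (here the positivity hypothesis $Y_g(M)>0$ is used). For the Dirac factor $\int_M\langle D_g\psi,\psi\rangle\,dv_g$, the constraint $I(\psi)>0$ together with $P^-(D_g\psi - I(\psi)u^2\psi)=0$ forces $\psi$ to have, in a quantitative sense, a dominant positive part; the idea is that projecting the identity onto $H^{1/2,-}$ and onto $H^{1/2,+}$ yields a coercivity estimate $\int_M\langle D_g\psi,\psi\rangle \ge \lambda^+_g(M)\,\|\psi^+\|^2$-type bound combined with a Sobolev/H\"older control of the denominator $\int_M|u|^2|\psi|^2$ by $\|u\|_{L^6}^2\|\psi\|_{L^3}^2$. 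Assembling these and optimizing the product should give the bound; I would present this as a short lemma isolating the spinorial coercivity coming from the $P^-$-constraint.

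For the \emph{right inequality} $\tilde Y_g(M)\le \tilde Y_{g_0}(S^3)=Y_{g_0}(S^3)\lambda^+_{g_0}(S^3)$, the standard strategy is to use test functions: take near-extremal pairs for the sphere (the standard bubbles $U$ for Yamabe and the Killing-type eigenspinors $\Psi$ realizing $\lambda^+_{g_0}(S^3)$), transplant them via the exponential map onto a small geodesic ball in $M$ centered at an arbitrary point, cut them off with the functions $\beta_k$ of Theorem~\ref{first}, and rescale by $R_n\to 0$. As $R_n\to 0$ the curvature corrections and the cutoff errors are lower order, so $\tilde E$ of the transplanted pair tends to the Euclidean value, which equals $\tilde Y_{g_0}(S^3)$ by conformal equivalence of $S^3\setminus\{pt\}$ with $\R^3$; one must also check that the transplanted pair satisfies (asymptotically, then after a small correction) the $P^-$-constraint and $I(\psi)>0$. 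The identity $Y_{g_0}(S^3)\lambda^+_{g_0}(S^3)=\tilde Y_{g_0}(S^3)$ itself follows because on the sphere the two factors decouple and are simultaneously optimized by conformal bubbles. I expect the bookkeeping of the constraint under cutoff and rescaling to be the main technical obstacle here, since the naive test pair only satisfies $P^-(\cdots)=0$ up to an $o(1)$ error that must be absorbed by an implicit-function-type adjustment.

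For the \emph{existence part}, assume $\tilde Y_g(M)<\tilde Y_{g_0}(S^3)$ and run a minimization/mountain-pass scheme for $E$ on the Nehari-type constraint set implicit in the definition of $\tilde Y_g(M)$, producing a (PS) sequence $(u_n,\psi_n)$ at the corresponding critical level $c$. Apply Theorem~\ref{first}: $(u_n,\psi_n)=(u_\infty,\psi_\infty)+\sum_{k=1}^m(v_n^k,\phi_n^k)+o(1)$ with the energy splitting $c=E(u_\infty,\psi_\infty)+\sum_{k=1}^m E_{\R^3}(U_\infty^k,\Psi_\infty^k)$. Each Euclidean bubble energy $E_{\R^3}(U_\infty^k,\Psi_\infty^k)$ is bounded below by a fixed positive quantity tied to $\tilde Y_{g_0}(S^3)$ (again by the conformal equivalence $\R^3\cong S^3\setminus\{pt\}$), while the chosen level $c$ is, by the strict inequality hypothesis, strictly below that threshold; hence $m=0$, so $(u_n,\psi_n)\to(u_\infty,\psi_\infty)$ strongly and $(u_\infty,\psi_\infty)$ solves~(\ref{el}). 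It remains to show $(u_\infty,\psi_\infty)$ is non-trivial, i.e. $u_\infty\not\equiv0$ and $\psi_\infty\not\equiv0$: this uses that the minimizing level $c$ is positive (so the limit is not the zero solution) and that the constraint $I(\psi_n)>0$ passes to the limit, preventing $\psi_\infty$ from degenerating; here the positivity of the Yamabe constant again rules out $u_\infty\equiv0$ with $\psi_\infty\not\equiv0$ through the first equation $L_gu=|\psi|^2u$. The delicate point is precisely this non-degeneracy of the limit, which is why the strict-inequality threshold must be chosen sharply as the bubble energy rather than something larger.
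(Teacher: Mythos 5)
Your treatment of the upper bound (transplanting the explicit Euclidean bubble pair via the exponential map, cutting off, rescaling, and then restoring the $P^-$-constraint by an implicit-function correction $h_\lambda\in H^{\frac{1}{2},-}$) and of the existence part (a Nehari-type min-max producing a (PS) sequence at level $\frac{1}{2}\tilde{Y}_g(M)$, then Theorem~\ref{first} plus the lower bound $E_{\R^3}(U,\Psi)\geq \frac{1}{2}\tilde{Y}_{g_0}(S^3)$ for nontrivial entire solutions to exclude bubbling under the strict inequality) coincide in structure and in the technical points you flag with what the paper does. On non-triviality of the limit, the paper's route is simpler than the one you sketch: strong convergence together with the Nehari normalization gives $\int_M|u_\infty|^2|\psi_\infty|^2dv_g=\tilde{Y}_g(M)>0$ directly, so neither component can vanish.

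The left inequality is where your sketch has a genuine gap. The coercivity estimate you propose, $\int_M\langle D_g\psi,\psi\rangle dv_g\gtrsim\lambda^+_g(M)\|\psi^+\|^2$ combined with the H\"older bound $\int_M|u|^2|\psi|^2dv_g\leq\|u\|_{L^6}^2\|\psi\|_{L^3}^2$, cannot assemble into $Y_g(M)\lambda^+_g(M)$: the invariant $\lambda^+_g(M)=\inf_{\tilde{g}\in[g]}\lambda_1(\tilde{g})\mathrm{Vol}_{\tilde{g}}(M)^{\frac{1}{3}}$ is a conformal spectral quantity, not an $H^{\frac{1}{2}}$-coercivity constant, and $\|\psi\|_{L^3}^2$ is not controlled by $\|\psi^+\|^2$ on the constraint set. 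The missing idea is to read the constraint \emph{conformally}: the paper first proves (its Proposition 5.1, via Ekeland's principle and a projector argument showing constrained (PS) sequences are free (PS) sequences) that for fixed smooth $u>0$ the infimum of $I(\psi)$ over $\{I(\psi)>0,\ P^-(D_g\psi-I(\psi)u^2\psi)=0\}$ is attained and equals $\lambda_1(D_{g_u})$, the first positive eigenvalue of the Dirac operator of $g_u=u^4g$. Since $dv_{g_u}=u^6dv_g$, one then chains
$$\tilde{E}(u,\psi)\ \geq\ Y_g(M)\Big(\int_Mu^6dv_g\Big)^{\frac{1}{3}}I(\psi)\ \geq\ Y_g(M)\,\lambda_1(g_u)\,\mathrm{Vol}_{g_u}(M)^{\frac{1}{3}}\ \geq\ Y_g(M)\,\lambda^+_g(M).$$
Without this eigenvalue characterization of the constrained infimum of $I$, the product structure of the lower bound does not close; the "short lemma isolating the spinorial coercivity" you defer to is precisely the nontrivial step, and the version of it you state is not the right one.
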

\noindent
Here we have denoted by
$$\lambda^{+}_g(M):=\inf_{\tilde g \in [g]}\lambda_1(\tilde g)Vol_{\tilde g}(M)^{\frac{1}{3}}$$
the invariant as defined in \cite{Ginoux} and $\lambda_1(g)$ being the smallest positive eigenvalue of $D_g$ on $M$.\\
We recall that $\lambda^{+}_{g}(M)$ can be characterized as follows (see \cite{Am1,Am2}):
\begin{align}
\lambda^{+}_{g}(M)&=\inf_{\psi \in im_{C^{\infty}}D_{g}\setminus\{0\}}\frac{\displaystyle\left(\int_{M}|\psi|^{\frac{3}{2}}dv_{g}\right)^{\frac{4}{3}}}{\displaystyle\left|\int_{M}\langle \psi,D^{-1}_{g}\psi \rangle dv_{g}\right|}\notag\\
&=\inf_{\varphi \in W^{1,\frac{3}{2}}(\Sigma M)\setminus \{0\}} \frac{\displaystyle\left(\int_{M}|D_{g}\varphi|^{\frac{3}{2}}dv_{g}\right)^{\frac{4}{3}}}{\displaystyle\left|\int_{M} \langle \varphi, D_{g}\varphi \rangle dv_{g}\right|}, \notag
\end{align}
where $im_{C^{\infty}}D_{g}$ is the image of the operator $D_{g}:C^{\infty}(\Sigma M)\to C^{\infty}(\Sigma M)$.\\

\noindent
Finally, in the last section we will consider a three-dimensional closed manifold $M$ with an isometric group action $G$ acting on $M$, such that the orbits of $G$ have infinite cardinality and we will show that equations $(\ref{el})$ admit two infinite families of solutions on such a manifold.

%%%%%%%%%%%%%%%%%%%%%%%%%%%%%%%%%%%%%%%%%%%%%%%%%%%%%%%%%%%%%%%%%%%%%%%%%%%%%%%%%%%%%%%%%%%%%%%%%%%%%%%%%%%%%%%%%%%%%%%%%%%%
%%%%%%%%%%%%%%%%%%%%%%%%%%%%%%%%%%%%%%%%%%%%%%%%%%%%%%%%%%%%%%%%%%%%%%%%%%%%%%%%%%%%%%%%%%%%%%%%%%%%%%%%%%%%%%%%%%%%%%%%%%%%
%%%%%%%%%%%%%%%%%%%%%%%%%%%%%%%%%%%%%%%%%%%%%%%%%%%%%%%%%%%%%%%%%%%%%%%%%%%%%%%%%%%%%%%%%%%%%%%%%%%%%%%%%%%%%%%%%%%%%%%%%%%%

\section{Conformally invariant operators, spaces of variations and splitting}

\noindent
In this section we will briefly recall some notations and properties of conformally invariant operators involved and we will give the definition of the Sobolev spaces that we are going to use.\\
Let $(M,g)$ be a closed (compact, without boundary) three dimensional Riemannian manifold, we define the conformal Laplacian acting on functions by
$$L_g u:=-\Delta_g u+\frac{1}{8}R_g u ,$$
where $\Delta_g$ is the standard Laplace-Beltrami operator and $R_g$ is the scalar curvature. The conformal invariance of $L_g$ reads as follows: if $\tilde g=g_u= u^4g$ is a metric in the conformal class of $g$, then we have
$$L_{\tilde g}f=u^{-5}L_g(uf) .$$
We will denote by $H^1(M)$ the usual Sobolev space on $M$, and we recall that by the Sobolev embedding theorems there is a continuous embedding $$H^1(M)\hookrightarrow L^p(M), \quad 1\leq p \leq 6 ,$$
which is compact if $1\leq p <6$.\\
Now let $\Sigma M$ the canonical spinor bundle associated to $M$ see \cite{F}, whose sections are simply called spinors on $M$.
This bundle is endowed with a natural Clifford multiplication $\text{Cliff}$, a hermitian metric and
a natural metric connection $\nabla^\Sigma$. The Dirac operator $D_g$ acts on spinors
$$D_g:C^\infty(\Sigma M)\longrightarrow C^\infty(\Sigma M)$$
defined as the composition $\text{Cliff} \circ \nabla^\Sigma$ in the following way
$$\nabla^\Sigma:C^\infty(\Sigma M)\longrightarrow C^\infty(T^*M\otimes\Sigma M) ,$$
$$\text{Cliff}:C^\infty(TM\otimes\Sigma M)\longrightarrow C^\infty(\Sigma M), $$
where $T^*M \simeq TM$ have been identified by means of the metric. We also have a conformal invariance that in our situation, $\tilde g= u^4g$, reads as follows
$$D_{\tilde g}\psi=u^{-4}D_g(u^2\psi) .$$
The functional space that we are going to define is the Sobolev space $H^{\frac{1}{2}}(\Sigma M)$. First we recall that the Dirac operator $D_g$ on a compact manifold is essentially self-adjoint in $L^2(\Sigma M)$, has compact resolvent and there exists a complete $L^2$-orthonormal basis of eigenspinors $\{\psi_i\}_{i\in\mathbb{Z}}$ of the operator
$$D_g\psi_i=\lambda_i \psi_i ,$$
and the eigenvalues $\{\lambda_i\}_{i\in\mathbb{Z}}$ are unbounded, that is $|\lambda_i|\rightarrow\infty$, as $|i|\rightarrow\infty$.
Now if $\psi\in L^{2}(\Sigma M)$, it has a representation in this basis, namely:
$$\psi=\sum_{i\in \mathbb{Z}}a_{i}\psi_{i}.$$
Let us define the unbounded operator $|D_g|^{s}: L^{2}(\Sigma M)\rightarrow L^{2}(\Sigma M)$ by
$$|D_g|^{s}(\psi)=\sum_{i\in \mathbb{Z}} a_{i}|\lambda_{i}|^{s}\psi_{i}.$$
We denote by $H^s(\Sigma M)$ the domain of $|D_g|^{s}$, namely $\psi\in H^s(\Sigma M)$ if and only if
$$\sum_{i\in \mathbb{Z}} a_{i}^2|\lambda_{i}|^{2s}<+\infty .$$
$H^s(\Sigma M)$ coincides with the usual Sobolev space $W^{s,2}(\Sigma M)$ and for $s <0$, $H^s(\Sigma M)$ is defined as the dual of $H^{-s}(\Sigma M)$. For $s >0$, we can define the inner product
$$\langle u,v\rangle_{s}=\langle|D_g|^{s}u,|D_g|^{s}v\rangle_{L^{2}},$$
which induces an equivalent norm in $H^{s}(\Sigma M)$; we will take
$$\langle u,u\rangle:=\langle u,u\rangle_{\frac{1}{2}}=\|u\|^{2}$$
as our standard norm for the space $H^{\frac{1}{2}}(\Sigma M)$. Even in this case, the Sobolev embedding theorems say that there is a continuous embedding
$$H^{s}(\Sigma M) \hookrightarrow L^p(\Sigma M), \quad 1\leq p \leq 3 ,$$
which is compact if $1\leq p <3$.
Finally, we will decompose $H^{\frac{1}{2}}(\Sigma M)$ in a natural way. Let us consider the $L^2$-orthonormal basis of eigenspinors $\{\psi_i\}_{i\in\mathbb{Z}}$: we denote by $\psi_i^-$ the eigenspinors with negative eigenvalue, $\psi_i^+$ the eigenspinors with positive eigenvalue and $\psi_i^0$ the eigenspinors with zero eigenvalue; we also recall that the kernel of $D_g$ is finite dimensional. Now we set:
$$H^{\frac{1}{2},-}:=\overline{\text{span}\{\psi_i^-\}_{i\in\mathbb{Z}}},\quad
H^{\frac{1}{2},0}:=\text{span}\{\psi_i^0\}_{i\in\mathbb{Z}}, \quad
H^{\frac{1}{2},+}:=\overline{\text{span}\{\psi_i^+\}_{i\in\mathbb{Z}}},$$
where the closure is taken with respect to the $H^{\frac{1}{2}}$-topology. Therefore we have the orthogonal decomposition  $H^{\frac{1}{2}}(\Sigma M)$ as:
$$H^{\frac{1}{2}}(\Sigma M)=H^{\frac{1}{2},-}\oplus H^{\frac{1}{2},0}\oplus H^{\frac{1}{2},+}.$$
Also, we let $P^{+}$ and $P^{-}$ be the projectors on $H^{\frac{1}{2},+}$ and $H^{\frac{1}{2},-}$ respectively.\\
Finally, sometimes we will denote by $\mathcal{H}$ the product space $\mathcal{H}=H^1(M) \times H^{\frac{1}{2}}(\Sigma M)$.

%%%%%%%%%%%%%%%%%%%%%%%%%%%%%%%%%%%%%%%%%%%%%%%%%%%%%%%%%%%%%%%%%%%%%%%%%%%%%%%%%%%%%%%%%%%%%%%%%%%%%%%%%%%%%%%%%%%%%%%%%%%%
%%%%%%%%%%%%%%%%%%%%%%%%%%%%%%%%%%%%%%%%%%%%%%%%%%%%%%%%%%%%%%%%%%%%%%%%%%%%%%%%%%%%%%%%%%%%%%%%%%%%%%%%%%%%%%%%%%%%%%%%%%%%
%%%%%%%%%%%%%%%%%%%%%%%%%%%%%%%%%%%%%%%%%%%%%%%%%%%%%%%%%%%%%%%%%%%%%%%%%%%%%%%%%%%%%%%%%%%%%%%%%%%%%%%%%%%%%%%%%%%%%%%%%%%%

\section{Regularity}

\noindent
Here we will prove the regularity of weak solutions of the system of equations (\ref{el}). Due to the critical nonlinearity, the bootstrap argument does not work, and we explicitly note the two equations in (\ref{el}) are strongly coupled, therefore we cannot apply the existing results for the conformal Yamabe equation and the conformal Dirac one separately; anyway we will proceed as in \cite{I} and we will be able to prove the following
\begin{theorem}
Let $(u,\psi)\in H^1(M) \times H^{\frac{1}{2}}(\Sigma M)$ be a weak solution of the system of equations (\ref{el}), then $(u,\psi)\in C^{2,\alpha}(M) \times C^{1,\beta}(\Sigma M)$, for some $0<\alpha,\beta<1$.
\end{theorem}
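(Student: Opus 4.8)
The plan is to upgrade the weak solution $(u,\psi)$ step by step, exploiting the coupling rather than treating the two equations separately. First I would record the starting integrability coming from the Sobolev embeddings: $u \in H^1(M) \hookrightarrow L^6(M)$ and $\psi \in H^{1/2}(\Sigma M) \hookrightarrow L^3(\Sigma M)$, so that $|u|^2|\psi|^2$ makes sense. The key reduction, following \cite{I}, is to rewrite the system in terms of the quantity $w = |u|^2 + |\psi|$ (or to estimate $u$ and $\psi$ simultaneously against a common right-hand side): the function equation gives $L_g u = |\psi|^2 u$, whose right-hand side lies in $L^{q}$ with $q$ determined by the current integrability of $u$ and $\psi$, while the spinor equation gives $D_g \psi = |u|^2 \psi$, whose right-hand side is controlled similarly. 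The analytic input is elliptic $L^p$-regularity for $L_g$ (an invertible second-order elliptic operator, since $Y_g(M)>0$) and for $D_g$ (invertible on $L^p$ by the vanishing of the kernel, as noted in the Remark), which bootstraps $u \in W^{2,q}$ and $\psi \in W^{1,q}$ from an $L^q$ right-hand side.

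The iteration proceeds as follows. I would set up a Moser-type or rather a De Giorgi-type iteration on the exponents: suppose $u \in L^{p}$ and $\psi \in L^{r}$; then $|\psi|^2 u \in L^{s}$ with $\tfrac1s = \tfrac2r + \tfrac1p$ and $|u|^2\psi \in L^{t}$ with $\tfrac1t = \tfrac2p + \tfrac1r$. Elliptic regularity then gives $u \in W^{2,s} \hookrightarrow L^{p'}$ and $\psi \in W^{1,t}\hookrightarrow L^{r'}$ with improved exponents $p',r'$, provided we stay below the critical thresholds. One checks that the exponent map $(p,r)\mapsto(p',r')$ strictly increases the exponents as long as they are finite, and that the only fixed point of the borderline relations is $p=r=\infty$; hence after finitely many steps (or in the limit) one reaches $u,\psi \in L^q$ for every $q<\infty$. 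At that point $|\psi|^2 u$ and $|u|^2\psi$ lie in every $L^q$, so $u\in W^{2,q}$ and $\psi \in W^{1,q}$ for all $q$, and Sobolev embedding gives $u \in C^{1,\gamma}$ and $\psi\in C^{0,\gamma}$ for every $\gamma<1$.

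The final step is a clean Schauder bootstrap: once $u \in C^{1,\gamma}$ and $\psi \in C^{0,\gamma}$, the right-hand side $|\psi|^2 u$ is $C^{0,\gamma}$, so Schauder estimates for $L_g$ give $u \in C^{2,\gamma}$; similarly $|u|^2\psi \in C^{0,\gamma}$, and Schauder-type (or rather $L^p$ then Sobolev, followed by Schauder) estimates for the first-order operator $D_g$ give $\psi \in C^{1,\gamma}$. Relabeling $\gamma$ as $\alpha$ and $\beta$ yields the claim. The main obstacle is the bookkeeping in the exponent iteration: because the two equations are strongly coupled, one must verify that the simultaneous recursion on $(p,r)$ genuinely gains integrability at each stage and does not stall below the critical exponents $6$ and $3$; this is where the precise structure of the nonlinearities $|\psi|^2 u$ and $|u|^2 \psi$ (quadratic in the "other" field, linear in the field being estimated) is essential, and it is the point at which we follow \cite{I} most closely. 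A secondary technical point is to make sure the first-order operator $D_g$ enjoys the same $W^{1,q}$-elliptic estimates as expected, which again uses invertibility of $D_g$ guaranteed by positivity of the Yamabe constant.
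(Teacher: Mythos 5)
Your overall skeleton (start from $u\in L^6$, $\psi\in L^3$, improve integrability, finish with Schauder) is the right shape, and the final Schauder step is fine, but the core of your argument --- the exponent iteration --- has a fatal gap: it never gets off the ground. Check the recursion at the starting point. With $p=6$, $r=3$ you get $|\psi|^2u\in L^s$ with $\tfrac1s=\tfrac23+\tfrac16=\tfrac56$, so $s=\tfrac65$ and $W^{2,6/5}(M)\hookrightarrow L^{p'}$ with $\tfrac1{p'}=\tfrac56-\tfrac23=\tfrac16$, i.e.\ $p'=6$; likewise $|u|^2\psi\in L^{3/2}$ and $W^{1,3/2}\hookrightarrow L^3$, i.e.\ $r'=3$. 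So $(p,r)=(6,3)$ is itself a fixed point of your borderline relations (your claim that the only fixed point is $p=r=\infty$ is false: the fixed-point equations give exactly $r=3$, $p=6$), and since the weak solution has no integrability beyond $(6,3)$ to begin with, the iteration stalls immediately. This is precisely what the paper means by ``due to the critical nonlinearity, the bootstrap argument does not work,'' and saying that one ``must verify the recursion does not stall'' names the obstacle without overcoming it.

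The missing idea is a localization-plus-smallness argument in the spirit of Brezis--Kato, which is what the paper (following Isobe) actually does. One does not put the full nonlinearity on the right-hand side; instead one localizes with cutoffs $\rho,\eta$ and moves the \emph{critical part} of the nonlinearity into the operator, studying $L_g-\eta|\psi|^2$ and $D_g-\eta u^2$. By H\"older, the operator norms of the perturbations $v\mapsto\eta|\psi|^2v$ and $\phi\mapsto\eta u^2\phi$ are controlled by $\|\psi\|^2_{L^3(B)}$ and $\|u\|^2_{L^6(B)}$, which can be made arbitrarily small by shrinking the support $B$ (absolute continuity of the integral); hence these perturbed operators are invertible as maps $W^{2,q}\to L^q$ ($1<q<\tfrac32$) and $W^{1,p}\to L^p$ ($1<p<3$), and also in the dual setting $L^6\to W^{-1,6}$ and $L^3\to W^{-1,3}$. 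The right-hand sides that remain are only the subcritical commutator terms $-u\Delta_g\rho-2g(\nabla\rho,\nabla u)$ and $\nabla\rho\cdot\psi$. Solving in both functional settings and invoking uniqueness identifies the solutions with $\rho u$ and $\rho\psi$, giving $\rho u\in W^{2,q}$ for $q$ up to (but below) $\tfrac32$ and $\rho\psi\in W^{1,p}$ for $p$ up to (but below) $3$ in a single step; since $W^{2,q}\hookrightarrow L^{q^*}$ with $q^*\to\infty$ as $q\uparrow\tfrac32$ (and similarly for $W^{1,p}$ as $p\uparrow3$), this already yields $u,\psi\in L^q$ for all finite $q$, after which your bootstrap and Schauder conclusion go through. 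Without some version of this absorption mechanism your proof cannot be repaired. (A minor additional point: the invertibility you invoke for $D_g$ via positivity of the Yamabe constant is a global statement not present in the theorem's hypotheses; the localized argument only needs invertibility of the model operators on the relevant $L^p$-scales together with the smallness of the perturbation.)
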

\begin{proof}
First of all, by the Sobolev embedding there is a continuous injection
$$H^1(M)\hookrightarrow L^p(M), \quad 1\leq p \leq 6 ,$$
$$H^{\frac{1}{2}}(\Sigma M) \hookrightarrow L^p(\Sigma M), \quad 1\leq p \leq 3.$$
Now let $\rho,\eta \in C^{\infty}(M)$, with $\eta=1$ on $supp(\rho)$ and let us denote $B=supp(\eta)$. We compute
\begin{align}
L_g(\rho u)&=-\Delta_g(\rho u)+\frac{1}{8}R_g \rho u  \notag\\
&=-\rho\Delta_g u-u\Delta_g \rho-2g(\nabla\rho,\nabla u)+\frac{1}{8}R_g \rho u \notag\\
&=\rho L_g( u)-u\Delta_g \rho-2g(\nabla\rho,\nabla u) \notag\\
&=\eta|\psi|^2 \rho u -u\Delta_g \rho-2g(\nabla\rho,\nabla u) \notag ,
\end{align}
and
$$D_g(\rho \psi)=\rho D_g \psi+\nabla\rho\cdot\psi=\eta u^2 \rho\psi+\nabla\rho\cdot\psi,$$
where we have denoted by $\cdot$ the Clifford multiplication for brevity. Now, since
$$u\in H^1(M),\qquad\psi \in L^3(\Sigma M),$$
we have that also
$$u\Delta_g \rho+2g(\nabla\rho,\nabla u)\in L^2(M),\qquad \nabla\rho\cdot\psi \in L^3(\Sigma M).$$
We define the two maps:
$$P_1:W^{2,q}(M)\longrightarrow L^{q}(M), \qquad P_1(v)=\eta|\psi|^2v,$$
$$P_2:W^{1,p}(\Sigma M)\longrightarrow L^{p}(\Sigma M), \qquad P_2(\phi)=\eta u^2 \phi.$$
By H\"{o}lder's inequality, the previous maps are well defined if $1<q<\frac{3}{2}$ and $1<p<3$, moreover there are constants depending on $q$ and $p$, such that the operator norms are bounded as follows:
$$\|P_1\|_{op}\leq C_q \|\psi\|^2_{L^3(\Sigma B)}, \qquad \|P_2\|_{op}\leq C_p \|u\|^2_{L^6(B)} .$$
In this way the operators
$$L_g-\eta|\psi|^2:W^{2,q}(M)\longrightarrow L^{q}(M), \qquad 1<q<\frac{3}{2} ,$$
$$D_g-\eta u^2:W^{1,p}(\Sigma M)\longrightarrow L^{p}(\Sigma M), \qquad 1<p<3 ,$$
are invertible if $\|\psi\|_{L^3(\Sigma B)}$ and $\|u\|_{L^6(B)}$ are small, which is possible by taking $B$ even smaller. Therefore there are unique solutions $v\in W^{2,q}(M)$ and $\phi\in W^{1,p}(\Sigma M)$ to the equations
\begin{align}
L_g v-\eta|\psi|^2v & = -u\Delta_g \rho-2g(\nabla\rho,\nabla u) \notag ,\\
D_g \phi-\eta u^2 \phi & =\nabla\rho\cdot\psi \notag,
\end{align}
if $1<q<\frac{3}{2}$ and $1<p<3$.\\
Now we will consider the two dual maps, defined as follows:
$$\tilde P_1:L^{6}(M)\longrightarrow W^{-1,6}(M), \qquad \tilde P_1(\tilde v)=\eta|\psi|^2 \tilde v,$$
$$\tilde P_2:L^{3}(\Sigma M)\longrightarrow W^{-1,3}(\Sigma M), \qquad \tilde P_2(\tilde \phi)=\eta u^2 \tilde \phi.$$
Again, by H\"{o}lder's inequality and Sobolev embedding, the previous maps are well defined and there exist constants, such that the operator norms are bounded as follows:
$$\|\tilde P_1\|_{op}\leq C_q \|\psi\|^2_{L^3(\Sigma B)}, \qquad \|P_2\|_{op}\leq C_p \|u\|^2_{L^6(B)} .$$
Even in this case, the operators
$$L_g-\eta|\psi|^2:L^{6}(M)\longrightarrow W^{-1,6}(M) ,$$
$$D_g-\eta u^2:L^{3}(\Sigma M)\longrightarrow W^{-1,3}(\Sigma M) ,$$
are invertible if $\|\psi\|_{L^3(\Sigma B)}$ and $\|u\|_{L^6(B)}$ are small; therefore there are unique solutions $\tilde v\in L^6(M)$ and $\tilde \phi\in L^3(\Sigma M)$ to the equations
\begin{align}
L_g \tilde v-\eta|\psi|^2\tilde v & =-u\Delta_g \rho-2g(\nabla\rho,\nabla u) \notag ,\\
D_g \tilde \phi-\eta u^2 \tilde \phi & =\nabla\rho\cdot\psi  \notag .
\end{align}
Moreover, since
$$ W^{2,q}(M) \hookrightarrow L^6(M), \quad \frac{6}{5}\leq q <\frac{3}{2} ,$$
$$W^{1,p}(\Sigma M) \hookrightarrow L^p(\Sigma M), \quad \frac{3}{2}\leq p < 3 ,$$
then by the uniqueness $\tilde v=v=\rho u$ and $\tilde \phi=\phi=\rho \psi$, under the above conditions on $q$ and $p$. Now, since $\rho$ and $\eta$ are smooth functions with arbitrary small supports, we have that $u\in  W^{2,q}(M)$ and $\psi \in W^{1,p}(\Sigma M)$, provided $\frac{6}{5}\leq q <\frac{3}{2}$ and $\frac{3}{2}\leq p < 3$. Therefore, by the Sobolev embedding, we get that $u\in L^q(M)$ and $\psi \in L^p(\Sigma M)$, for any $1< q,p <\infty$; and then by plugging them in the initial equations, we have that $u\in  W^{2,q}(M)$ and $\psi \in W^{1,p}(\Sigma M)$, for any $1< q,p <\infty$, by the elliptic regularity estimates. Once more, by the Sobolev embedding for the H\"{o}lder spaces, we have that there exist $0<\alpha,\beta<1$ such that $u\in C^{0,\alpha}(M)$ and $\psi \in C^{0,\beta}(\Sigma M)$; finally by the elliptic regularity again, we get $u\in C^{2,\alpha}(M)$ and $\psi \in C^{1,\beta}(\Sigma M)$.
\end{proof}

%%%%%%%%%%%%%%%%%%%%%%%%%%%%%%%%%%%%%%%%%%%%%%%%%%%%%%%%%%%%%%%%%%%%%%%%%%%%%%%%%%%%%%%%%%%%%%%%%%%%%%%%%%%%%%%%%%%%%%%%%%%%
%%%%%%%%%%%%%%%%%%%%%%%%%%%%%%%%%%%%%%%%%%%%%%%%%%%%%%%%%%%%%%%%%%%%%%%%%%%%%%%%%%%%%%%%%%%%%%%%%%%%%%%%%%%%%%%%%%%%%%%%%%%%
%%%%%%%%%%%%%%%%%%%%%%%%%%%%%%%%%%%%%%%%%%%%%%%%%%%%%%%%%%%%%%%%%%%%%%%%%%%%%%%%%%%%%%%%%%%%%%%%%%%%%%%%%%%%%%%%%%%%%%%%%%%%

\section{Classification of the (PS) sequences}

\noindent
Here we will prove Theorem (\ref{first}). We will need many preliminary propositions and lemmata.
\begin{proposition}
If $\ker D_{g}=\{0\}$, then every (PS) sequence for $E$ is bounded.
\end{proposition}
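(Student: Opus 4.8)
Set $A_n=\int_M u_nL_g u_n\,dv_g$, $B_n=\int_M\langle D_g\psi_n,\psi_n\rangle\,dv_g$ and $C_n=\int_M|u_n|^2|\psi_n|^2\,dv_g$, so that $2E(u_n,\psi_n)=A_n+B_n-C_n$; write $t_n=\|u_n\|_{H^1(M)}$ and $s_n=\|\psi_n\|_{H^{\frac12}(\Sigma M)}$. The plan is to bound $t_n$ and $s_n$ by playing the energy identity $A_n+B_n-C_n\to 2c$ against $\nabla E(u_n,\psi_n)\to 0$ in $\mathcal H^*$, using two structural facts. First, coercivity of the scalar part: since we are under the positivity assumption on the Yamabe invariant (which is what also yields $\ker D_g=\{0\}$, cf. the Remark after Theorem \ref{first}), $L_g$ has positive first eigenvalue, hence $A_n\ge c_0 t_n^2$ for some $c_0>0$ and all $n$. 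Second, because $\ker D_g=\{0\}$ one has the $g$-orthogonal splitting $H^{\frac12}(\Sigma M)=H^{\frac12,+}\oplus H^{\frac12,-}$ with no zero part; with the standard $H^{\frac12}$-norm $\|\cdot\|$ (the one induced by $|D_g|^{1/2}$) this gives, writing $\psi_n^\pm=P^\pm\psi_n$, the exact identities $B_n=\|\psi_n^+\|^2-\|\psi_n^-\|^2$ and $\|\psi_n^+-\psi_n^-\|^2=s_n^2$.

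First I would extract first-order information by testing $\nabla E(u_n,\psi_n)\to 0$ against well chosen directions. Against $h=u_n$ in the scalar slot: $|A_n-C_n|\le\varepsilon_n t_n$. Against $\phi=\psi_n$ in the spinorial slot: $|B_n-C_n|\le\varepsilon_n s_n$. Against $\phi=\psi_n^+$ and $\phi=-\psi_n^-$ (admissible, since $P^\pm$ are bounded on $H^{\frac12}$), and summing, using $\int_M\langle D_g\psi_n,\psi_n^{\pm}\rangle=\int_M\langle D_g\psi_n^{\pm},\psi_n^{\pm}\rangle=\pm\|\psi_n^{\pm}\|^2$ together with $\|\psi_n^+\|+\|\psi_n^-\|\le\sqrt2\,s_n$:
$$s_n^2\le \mathrm{Re}\!\int_M|u_n|^2\langle\psi_n,\psi_n^+-\psi_n^-\rangle\,dv_g+\varepsilon_n s_n ,$$
with $\varepsilon_n\to 0$. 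Combining the first two estimates with $A_n+B_n-C_n\to 2c$ also gives $C_n=2c+o(1)+o(1)(t_n+s_n)$.

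Next I would close the loop. Cauchy--Schwarz in $L^2$ followed by the Sobolev embeddings $H^1(M)\hookrightarrow L^6(M)$ and $H^{\frac12}(\Sigma M)\hookrightarrow L^3(\Sigma M)$ (and $\|\psi_n^+-\psi_n^-\|=s_n$) bound the cross term by $C_n^{1/2}\big(\int_M|u_n|^2|\psi_n^+-\psi_n^-|^2\,dv_g\big)^{1/2}\le C\,C_n^{1/2}t_ns_n$, whence $s_n\le C\,C_n^{1/2}t_n+o(1)$; coercivity gives $c_0t_n^2\le A_n=C_n+o(1)t_n$. Now argue by contradiction: if $M_n:=t_n+s_n\to\infty$ along a subsequence, then $C_n=2c+o(M_n)=O(1)+o(M_n)$; feeding this into $c_0t_n^2\le C_n+o(1)t_n$ forces $t_n=o(M_n^{1/2})$; feeding $C_n^{1/2}=O(1)+o(M_n^{1/2})$ and $t_n=o(M_n^{1/2})$ into $s_n\le C\,C_n^{1/2}t_n+o(1)$ forces $s_n=o(M_n)$; hence $M_n=t_n+s_n=o(M_n)$, which is impossible. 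Therefore $(u_n,\psi_n)$ is bounded in $\mathcal H$. (The cases $\psi_n\equiv 0$, or $s_n$ bounded along the subsequence, are subsumed, the contradiction then coming from the $t_n$-estimate alone.)

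The one genuinely delicate point is the strong indefiniteness of the Dirac term: $B_n$ by itself carries no control on $s_n$, which is exactly why one must test separately against $\psi_n^+$ and $-\psi_n^-$ and then dominate the cross term $\int_M|u_n|^2\langle\psi_n,\psi_n^+-\psi_n^-\rangle$ --- the crux being to pull out the factor $C_n^{1/2}$ by Cauchy--Schwarz, since $C_n$ is essentially pinned to the energy level by the first-order information. The rest is the scaling bookkeeping of the contradiction step. Note that the positivity of the Yamabe invariant is genuinely used, through the coercivity of $A_n$; without it one would additionally have to peel off the finite-dimensional non-positive eigenspace of $L_g$ and treat it by hand.
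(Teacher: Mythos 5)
Your proof is correct and follows essentially the same route as the paper: pin $\int_M|u_n|^2|\psi_n|^2$ to $2c+o(\|z_n\|)$ via the identity $2E(z_n)-\langle dE(z_n),z_n\rangle$, control $\|u_n\|$ by the coercive $L_g$-form, and control $\|\psi_n\|$ by testing the approximate Dirac equation against $\psi_n^{\pm}$ and applying H\"older/Sobolev to the cross term. The only difference is presentational: you close with an explicit contradiction and scaling bookkeeping in $M_n=t_n+s_n$, whereas the paper absorbs the $o(\|z_n\|)$ terms directly into $\|z_n\|\le C+o(\|z_n\|)$; you also make explicit the positivity of $L_g$ that the paper uses implicitly when writing $\|u_n\|^2=\int_M u_nL_gu_n\,dv_g$.
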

\begin{proof}
Let $(u_{n},\psi_{n})_{n\in \N} \in H^{1}(M)\times H^{\frac{1}{2}}(\Sigma M)$ be a (PS) sequence for $E$, that is
$$E(u_{n},\psi_{n})\to c, \qquad dE(u_{n},\psi_{n})\to 0,  \text{ in }  H^{-1}(M)\times H^{-\frac{1}{2}}(\Sigma M) .$$
Therefore, there exists a sequence $(\varepsilon_n, \delta_n) \in  H^{-1}(M)\times H^{-\frac{1}{2}}(\Sigma M)$ such that
\begin{equation}\label{equ}
L_{g} u_{n}=|\psi_{n}|^{2}u_{n}+\varepsilon_{n} ,
\end{equation}
\begin{equation}\label{eqp}
D_g\psi_{n}=|u_{n}|^{2}\psi_{n}+\delta_{n} ,
\end{equation}
with
$$\varepsilon_{n}\to 0,  \text{ in } H^{-1}(M) \quad \text{ and } \quad  \delta_{n}\to 0,  \text{ in } H^{-\frac{1}{2}}(M) .$$
We let $z_{n}=(u_{n},\psi_{n}) \in \mathcal{H}$, then
$$2E(z_{n})-\langle dE(z_{n}),z_{n} \rangle = \int_{M}|u_{n}|^{2}|\psi_{n}|^{2}dv_{g} .$$
Hence
\begin{equation}\label{lev}
\int_{M}|u_{n}|^{2}|\psi_{n}|^{2}dv_{g}=2c+o(\|z_{n}\|).
\end{equation}
Multiplying (\ref{equ}) by $u_{n}$ and integrating we have
$$\|u_{n}\|^{2}=\int_{M}|u_{n}|^{2}|\psi_{n}|^{2}dv_{g}+o(\|u_{n}\|),$$
hence
\begin{equation}
\|u_{n}\|^{2}=2c+o(\|z_{n}\|) .
\end{equation}
Now multiplying (\ref{eqp}) by $\psi_{n}^{+}=P^{+}(\psi_n)$, we find
\begin{align}
\|\psi_{n}^{+}\|^{2}&\leq C\int_{M}|u_{n}|^{2}|\psi_{n}||\psi_{n}^{+}|dv_{g} +o(\|\psi_{n}^{+}\|)\notag \\
&\leq C\left(\int_{M}|u_{n}|^{2}|\psi_{n}|^{2}dv_{g}\right)^{\frac{1}{2}} \left(\int_{M}|u_{n}|^{2}|\psi^{+}_{n}|^{2}dv_{g}\right)^{\frac{1}{2}}+o(\|\psi^{+}_{n}\|)\notag \\
&\leq C\big(2c+o(\|z_{n}\|)\big)^{\frac{1}{2}}\|u_{n}\|_{L^{6}}\|\psi_{n}^{+}\|_{L^{3}}+o(\|\psi_{n}\|)\notag \\
&\leq C\big(2c+o(\|z_{n}\|)\big)\|\psi_{n}\| +o(\|\psi_{n}\|)\notag.
\end{align}
Similarly, we have for $\psi_{n}^{-}=P^{-}(\psi_n)$ that
$$\|\psi_{n}^{-}\|^{2}\leq C\big(2c+o(\|z_{n}\|)\big)\|\psi_{n}\| +o(\|\psi_{n}\|).$$
Hence,
$$\|\psi_{n}\|\leq C\big(2c+o(\|z_{n}\|)\big)+o(1),$$
so that
$$\|z_{n}\|\leq C+o(\|z_{n}\|)$$
and $\|z_{n}\|$ is bounded.
\end{proof}

\noindent
From the previous proposition, we have that up to a subsequence, $z_{n}\rightharpoonup z_{\infty}=(u_{\infty},\psi_{\infty})$ in $\mathcal{H}$, also $u_{n}\to u_{\infty}$ in $L^{p}$ for $p<6$ and $\psi_{n}\to\psi$ in $L^{q}$ for $q<3$.
We claim that $z_{\infty}$ is a week solution to (\ref{el}). Indeed, let $z_0=(u_{0},\psi_{0})\in \mathcal{H}$, since $(z_{n})$ is a (PS) sequence for $E$, we have
$$\int_{M}u_{0} L_{g}u_{n}  dv_{g}=\int_{M}|\psi_{n}|^{2}u_{n}u_{0}dv_{g} +o(1),$$
but $|\psi_{n}|^{2}\in L^{\frac{3}{2}}$ and $u_{n}\in L^{6}$, thus $u_{n}|\psi_{n}|^{2}$ converges weakly to $u_{\infty}|\psi_{\infty}|^{2}$ in $L^{\frac{6}{5}}$, hence
$$\int_{M}|\psi_{n}|^{2}u_{n}u_{0}dv_{g}\to \int_{M}|\psi_{\infty}|^{2}u_{\infty}u_{0}dv_{g} .$$
Also, by the weak convergence we have that
$$\int_{M}  u_{0}L_{g}u_{n}  dv_{g}\to \int_{M} u_{0}L_{g}u_{\infty}  dv_{g} .$$
Therefore,
$$L_g u_{\infty}=|\psi_{\infty}|^{2}u_{\infty},$$
and similarly it holds
$$D_{g}\psi_{\infty}=|u_{\infty}|^{2}\psi_{\infty}.$$
We let now $v_{n}=u_{n}-u_{\infty}$ and $\phi_{n}=\psi_{n}-\psi_{\infty}$, then we have the following
\begin{lemma}
Let $h_{n}=(v_{n},\phi_{n})$, then
$$E(h_{n})=E(z_{n})-E(z_{\infty})+o(1)$$
and
$$dE(h_{n})\to 0, \text{ in } H^{-1}(M)\times H^{-\frac{1}{2}}(\Sigma M) .$$
\end{lemma}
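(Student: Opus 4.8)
The plan is to prove the two assertions by the standard Brezis--Lieb type splitting adapted to the coupled system. First I would establish the energy identity $E(h_n)=E(z_n)-E(z_\infty)+o(1)$. Writing $E(u,\psi)=\frac12(\int_M u L_g u + \langle D_g\psi,\psi\rangle - |u|^2|\psi|^2\,dv_g)$, the quadratic pieces $\int_M u L_g u$ and $\int_M\langle D_g\psi,\psi\rangle$ split exactly up to $o(1)$: since $u_n\rightharpoonup u_\infty$ in $H^1$ and $v_n=u_n-u_\infty\rightharpoonup 0$, one has $\int_M u_n L_g u_n = \int_M u_\infty L_g u_\infty + \int_M v_n L_g v_n + 2\int_M u_\infty L_g v_n$ and the cross term $\int_M u_\infty L_g v_n = \langle u_\infty, v_n\rangle_{L_g}\to 0$ by weak convergence; the same argument applies to the Dirac quadratic form using $\phi_n\rightharpoonup 0$ in $H^{1/2}$ and the eigenspinor decomposition. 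The only genuinely nontrivial term is the quartic one $\int_M |u_n|^2|\psi_n|^2\,dv_g$, which I would handle by a Brezis--Lieb lemma: combining $u_n\to u_\infty$ strongly in $L^p$ for $p<6$ and $\psi_n\to\psi_\infty$ strongly in $L^q$ for $q<3$ (both from the compact Sobolev embeddings, valid since the sequence is bounded by the previous proposition), one gets $\int_M |u_n|^2|\psi_n|^2 = \int_M |u_\infty|^2|\psi_\infty|^2 + \int_M |v_n|^2|\phi_n|^2 + o(1)$; here the almost-everywhere convergence plus the uniform boundedness in the borderline spaces $L^6$ and $L^3$ yields the vanishing of all mixed terms, for instance $\int_M |u_\infty|^2|\psi_n|^2 \to \int_M |u_\infty|^2|\psi_\infty|^2$ by dominated convergence after extracting a pointwise convergent subsequence, while $\int_M |v_n|^2 |\psi_\infty|^2\to 0$ because $v_n\to 0$ in $L^2_{loc}$ of the bounded manifold against the fixed $L^{3}$ weight.

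For the second assertion, that $dE(h_n)\to 0$ in $H^{-1}(M)\times H^{-1/2}(\Sigma M)$, I would compute the two components of $dE(h_n)$ separately. The first component is $L_g v_n - |v_n|^2 v_n$ and we must show it tends to zero in $H^{-1}(M)$. Using $L_g u_n = |\psi_n|^2 u_n + \varepsilon_n$ with $\varepsilon_n\to 0$ in $H^{-1}$ and $L_g u_\infty = |\psi_\infty|^2 u_\infty$, we get $L_g v_n = |\psi_n|^2 u_n - |\psi_\infty|^2 u_\infty + \varepsilon_n$, so it suffices to prove $|\psi_n|^2 u_n - |\psi_\infty|^2 u_\infty - |\phi_n|^2 v_n \to 0$ in $H^{-1}$, i.e. in $L^{6/5}$ by Sobolev duality. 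This is again a Brezis--Lieb statement for the product: expanding $|\psi_n|^2 u_n = |\phi_n+\psi_\infty|^2(v_n+u_\infty)$ and $|\phi_n|^2 v_n$, all the mixed terms either contain a factor converging strongly to zero in an appropriate $L^p$ (the ones carrying $v_n$ or $\phi_n$ paired against the fixed limits $u_\infty,\psi_\infty$) or converge by dominated convergence to the $z_\infty$-term (the ones built purely from $u_\infty,\psi_\infty$ tested against weakly convergent factors), always keeping products within the Hölder-admissible range $\frac{1}{p}=\frac{2}{3}+\frac{1}{6}=\frac{5}{6}$. The second, spinorial, component $D_g\phi_n - |v_n|^2\phi_n$ is treated identically using \eqref{eqp}, $\delta_n\to 0$ in $H^{-1/2}$, and the embedding $H^{1/2}\hookrightarrow L^3$: one shows $|u_n|^2\psi_n - |u_\infty|^2\psi_\infty - |v_n|^2\phi_n\to 0$ in $L^{3/2}\hookrightarrow H^{-1/2}$ by the same bookkeeping of mixed terms.

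The main obstacle, and the step deserving the most care, is the Brezis--Lieb splitting of the quartic nonlinearity and of the corresponding gradient terms: one must verify that every cross term really is $o(1)$ in the correct (dual) norm, which is delicate precisely because the problem is critical, so the only available strong convergences are subcritical ($L^p$, $p<6$ for $u$ and $p<3$ for $\psi$) while the natural space for the nonlinearity sits exactly at the endpoint. The resolution is to distribute the critical integrability so that at most one factor in each product is taken in the endpoint space, with the remaining factors in strictly subcritical spaces where compactness gives strong convergence; combined with extracting an a.e.-convergent subsequence and invoking the Brezis--Lieb lemma (in the form: if $f_n\to f$ a.e.\ and $\sup_n\|f_n\|_{L^r}<\infty$ then $\|f_n\|_{L^r}^r-\|f_n-f\|_{L^r}^r\to\|f\|_{L^r}^r$, and analogously for the bilinear/quadrilinear forms), this closes the argument. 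The quadratic terms and the error terms $\varepsilon_n,\delta_n$ are routine once this is in place.
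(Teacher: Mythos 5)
Your proposal follows essentially the same route as the paper: expand $E(z_n)$ around $z_\infty$, use that $(u_\infty,\psi_\infty)$ is a critical point to absorb the terms linear in $h_n$, and dispose of the remaining cross terms in the quartic interaction and in the gradient by H\"older's inequality together with the strong subcritical convergences $u_n\to u_\infty$ in $L^p$, $p<6$, and $\psi_n\to\psi_\infty$ in $L^q$, $q<3$; your reductions $|\psi_n|^2u_n-|\psi_\infty|^2u_\infty-|\phi_n|^2v_n\to0$ in $L^{6/5}\hookrightarrow H^{-1}$ and its spinorial analogue in $L^{3/2}\hookrightarrow H^{-1/2}$ are exactly the ones the paper carries out. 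Two remarks. First, a slip: the scalar component of $dE(h_n)$ is $L_gv_n-|\phi_n|^2v_n$, not $L_gv_n-|v_n|^2v_n$ (your subsequent computation is consistent with the correct expression). Second, and more substantively, a few cross terms sit \emph{exactly} at the H\"older endpoint if one only uses the natural integrability of the weak limit: for $\int_M|v_n|^2|\psi_\infty|^2\,dv_g$ with $\psi_\infty$ known only to lie in $L^3$, the unique admissible split is $\|v_n\|_{L^6}^2\,\||\psi_\infty|^2\|_{L^{3/2}}$, so the factor $v_n$ appears twice at the critical exponent; your justification that ``$v_n\to0$ in $L^2$ against the fixed $L^3$ weight'' does not close this, and your general recipe of putting at most one factor at the endpoint cannot be realized for such terms. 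The paper resolves this by first invoking the regularity theorem (Theorem 3.1): since $(u_\infty,\psi_\infty)$ is a weak solution of $(\ref{el})$ it belongs to $C^{2,\alpha}(M)\times C^{1,\beta}(\Sigma M)$, hence is bounded, after which every cross term pairs an $L^\infty$ weight with a quantity converging to zero strongly in a strictly subcritical Lebesgue space. Alternatively, the Br\'ezis--Lieb mechanism you allude to does work if phrased as weak convergence of products (e.g.\ $|v_n|^2\rightharpoonup0$ in $L^3$ by a.e.\ convergence plus boundedness, tested against $|\psi_\infty|^2\in L^{3/2}$), but one of these two fixes must be made explicit for the argument to be complete.
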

\begin{proof}
\begin{align}
2E(z_{n})&=\int_{M}(v_{n}+u_{\infty})L_{g}( v_{n}+u_{\infty})dv_{g} +\int_{M}\langle D_g(\phi_{n}+\psi_{\infty}),\phi_{n}+\psi_{\infty}\rangle dv_{g}\notag\\
&\quad -\int_{M}| v_{n}+u_{\infty}|^{2}|\phi_{n}+\psi_{\infty}|^{2}dv_{g}\notag\\
&=2E(h_{n})+2E(z_{\infty})+\langle dE(z_{\infty}),h_{n}\rangle -\int_{M}|v_{n}|^{2}|\psi_{\infty}|^{2}+2|v_{n}|^{2}\langle \phi_{n},\psi_{\infty}\rangle\notag\\
&\quad +|v_{\infty}|^{2}|\phi_{n}|^{2}+2|\phi_{n}|^{2}v_{n}u_{\infty} +2|\psi_{\infty}|^{2}u_{\infty}v_{n}-4v_{n}u_{\infty}\langle \psi_{\infty}, \phi_{n}\rangle dv_{g} \notag .
\end{align}
Now, we first notice that since $dE(z_{\infty})=0$, we focus on the remaining terms. By the regularity result in Theorem (3.1), we have that $u_{\infty}\in C^{2,\alpha}(M)$ and $\psi_{\infty}\in C^{1,\beta}(\Sigma M)$. Since $v_{n}\to 0$ strongly in $L^{2}(M)$ and $\phi_{n}\to 0$ in $L^{2}(\Sigma M)$ we have that
$$ -\int_{M}|v_{n}|^{2}|\psi_{\infty}|^{2}+2|v_{n}|^{2}\langle \phi_{n},\psi_{\infty}\rangle+|v_{\infty}|^{2}|\phi_{n}|^{2}+2|\psi_{\infty}|^{2}u_{\infty}v_{n}-4v_{n}u_{\infty}\langle \psi_{\infty}, \phi_{n}\rangle dv_{g} \to 0 .$$
The last term is $\int_{M}2|\phi_{n}|^{2}v_{n}u_{\infty}dv_{g}$, but we have that $\phi_{n}\to 0$ in $L^{\frac{5}{2}}(\Sigma M)$ and $v_{n}\to 0$ in $L^{5}(M)$ therefore we conclude that
$$E(z_{n})=E(h_{n})+E(z_{\infty})+o(1),$$
and this finishes the energy estimate. Now for the gradient part $dE$, we denote by $d_{u}E$ and $d_{\psi}E$ the scalar and the spinorial components respectively. We have:
$$d_{u}E(h_{n})=d_{u}E(u_{n},\psi_{n})+d_{u}E(u_{\infty},\psi_{\infty})+|\psi_{n}|^{2}u_{\infty}-|\psi_{\infty}|^{2}u_{n}+2\langle \psi_{n},\psi_{\infty}\rangle v_{n} .$$
But again, $d_{u}E(u_{\infty},\psi_{\infty})=0$ and since $\psi_{n}\to \psi_{\infty}$ in $L^{\frac{12}{5}}(\Sigma M)$ and $u_{n}\to u_{\infty}$ in $ L^{\frac{6}{5}}(M)$ we have that
$$|\psi_{n}|^{2}u_{\infty}-|\psi_{\infty}|^{2}u_{n}\to 0$$
in $L^{\frac{6}{5}}(M)$ hence in $H^{-1}(M)$. We also have that $v_{n}\to 0$ in $L^{\frac{12}{5}}(M)$ and $\psi_{n}\to \psi_{\infty}$ in $L^{\frac{12}{5}}(\Sigma M)$, thus
$\langle \psi_{n},\psi_{\infty}\rangle v_{n}\to 0$ in $L^{\frac{6}{5}}(M)$, thus in $H^{-1}(M)$. Therefore,
$$d_{u}E(h_{n})=o(1), \text{ in } H^{-1}(M).$$
We move now to the spinorial part, that is
$$d_{\psi}E(h_{n})=d_{\psi}E(u_{n},\psi_{n})-d_{\psi}E(u_{\infty},\psi_{\infty})+ |u_{n}|^{2}\psi_{\infty}-|u_{\infty}|^{2}\psi_{n}+2u_{n}u_{\infty}\phi_{n} .$$
Again $d_{\psi}E(u_{\infty},\psi_{\infty})=0$ and $u_{n}\to u_{\infty}$ in $L^{\frac{6}{2}}(M)$ and $\psi_{n}\to \psi_{\infty}$ in $L^{\frac{3}{2}}(\Sigma M)$. Moreover we have that $\phi_{n}\to 0$ in $L^{\frac{5}{2}}(\Sigma M)$ and $u_{n}\to u_{\infty}$ in $L^{\frac{15}{4}}(M)$. It follows that
$$d_{\psi}E(h_{n})=o(1), \text{ in } H^{-\frac{1}{2}}(\Sigma M) .$$
\end{proof}

\noindent
So from now on, we will assume that our (PS) sequence $z_{n}=(u_{n},\psi_{n})$, converges weakly to zero in $H^{1}(M)\times H^{\frac{1}{2}}(\Sigma M)$ and strongly in $L^{p}(M)\times L^{q}(\Sigma M)$, for $p<6$ and $q<3$.\\
We assume that $z_{n}$ does not converge to zero in $H^{1}(M)\times H^{\frac{1}{2}}(\Sigma M)$ since otherwise the (PS) condition would be satisfied. Now, let us denote by $B_{r}(x)$ the geodesic ball with center in $x\in M$ and radius $r$, we define the following sets, for a given $\epsilon_0>0$:
$$\Sigma_{1}=\left\{x\in M; \liminf_{r\to 0} \liminf_{n\to\infty}\int_{B_{r}(x)}|u_{n}|^{6}dv_{g}\geq\epsilon_{0}\right\} ,$$
$$\Sigma_{2}=\left\{x\in M; \liminf_{r\to 0} \liminf_{n\to\infty}\int_{B_{r}(x)}|\psi_{n}|^{3}dv_{g}\geq\epsilon_{0}\right\} ,$$
$$\Sigma_{3}=\left\{x\in M; \liminf_{r\to 0} \liminf_{n\to\infty}\int_{B_{r}(x)}|u_{n}|^{2}|\psi_{n}|^{2}dv_{g}\geq \epsilon_{0}\right\} .$$
We have:
\begin{lemma}
There exists $\epsilon_{0}>0$ depending on $M$, such that if $x_{0}\not \in \Sigma_{1}\cap \Sigma_{2} \cap \Sigma_{3}$, then there exists $r>0$ such that $z_{n}\to 0$ in $H^{1}(B_{r}(x_{0}))\times H^{\frac{1}{2}}(\Sigma B_{r}(x_{0}))$.
\end{lemma}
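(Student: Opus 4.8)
\noindent
The plan is to localize the two equations \eqref{equ}--\eqref{eqp} near $x_0$ by testing them against cut-off multiples of $u_n$ and $\psi_n$, and then to run a short bootstrap which, in each of the three possible cases ($x_0\notin\Sigma_1$, or $x_0\notin\Sigma_2$, or $x_0\notin\Sigma_3$), forces both local norms to go to zero. Throughout I will use that, the weak limit having been subtracted, $u_n\to 0$ in $L^2(M)$, $\psi_n\to 0$ in $L^2(\Sigma M)$, $\varepsilon_n\to 0$ in $H^{-1}(M)$, $\delta_n\to 0$ in $H^{-\frac12}(\Sigma M)$, that $\{z_n\}$ is bounded in $\mathcal{H}$, and that $\ker D_g=\{0\}$.

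\noindent
First I would fix, for small $r>0$, a chain of cut-off functions $\rho,\rho',\rho''\in C^\infty_c(B_r(x_0))$ with $0\le\rho,\rho',\rho''\le 1$, each identically $1$ on a smaller concentric ball and with $\mathrm{supp}\,\rho''\subset\{\rho'\equiv 1\}$ and $\mathrm{supp}\,\rho'\subset\{\rho\equiv 1\}$, and derive two localized estimates. Testing \eqref{equ} against $\rho^2 u_n$ and integrating by parts, the zeroth-order contributions and the pairing $\langle\varepsilon_n,\rho^2 u_n\rangle$ are $o(1)$, hence
$$\|\rho u_n\|^2_{H^1(M)}=\int_M\rho^2|\psi_n|^2 u_n^2\,dv_g+o(1),$$
and bounding the right-hand side by H\"older and Sobolev either as $C\|\psi_n\|^2_{L^3(\mathrm{supp}\,\rho)}\,\|\rho u_n\|^2_{H^1}$ or, more crudely, as $\int_{\mathrm{supp}\,\rho}|u_n|^2|\psi_n|^2\,dv_g$ yields a self-improving inequality together with a crude variant. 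For the spinor I would exploit that, since $\ker D_g=\{0\}$, the operator $P^+-P^-$ is an isometry of $H^{\frac12}(\Sigma M)$ and $\int_M\langle D_g\phi,(P^+-P^-)\phi\rangle\,dv_g=\|\phi\|^2$; applying this with $\phi=\rho'\psi_n$ and writing $D_g(\rho'\psi_n)=\rho'|u_n|^2\psi_n+\rho'\delta_n+\nabla\rho'\cdot\psi_n$, then using $H^{\frac12}(\Sigma M)\hookrightarrow L^3(\Sigma M)$, boundedness of multiplication by $\rho'$ on $H^{\frac12}(\Sigma M)$, and $\psi_n\to 0$ in $L^2$, $\delta_n\to 0$ in $H^{-\frac12}$, I expect
$$\|\rho'\psi_n\|^2\le C\|u_n\|^2_{L^6(\mathrm{supp}\,\rho')}\,\|\rho'\psi_n\|^2+\tfrac12\|\rho'\psi_n\|^2+o(1).$$

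\noindent
Next I would fix $\epsilon_0>0$, depending only on $(M,g)$ and on the constants above, small enough that the coefficients of $\|\rho u_n\|^2_{H^1}$ and $\|\rho'\psi_n\|^2$ in the two estimates are $<1$ as soon as $\int_{B_r(x_0)}|\psi_n|^3\le\epsilon_0$, respectively $\int_{B_r(x_0)}|u_n|^6\le\epsilon_0$, for $n$ large, and also small enough that the crude variant can be fed back into the spinor estimate. Since $x_0\notin\Sigma_1\cap\Sigma_2\cap\Sigma_3$, at least one of the three sets misses $x_0$, so---up to a subsequence---one can fix $r$ small so that the corresponding concentration integral over $B_r(x_0)$ stays below $\epsilon_0$ for $n$ large. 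If $x_0\notin\Sigma_1$, the spinor estimate gives $\|\rho'\psi_n\|\to 0$, hence $\|\psi_n\|_{L^3}\to 0$ on $\{\rho'\equiv1\}$, and then the scalar estimate gives $\|\rho'' u_n\|_{H^1}\to 0$; the case $x_0\notin\Sigma_2$ is symmetric, starting from the scalar estimate and feeding $\|u_n\|_{L^6}\to 0$ into the spinor one; and if $x_0\notin\Sigma_3$, the crude variant first gives $\limsup_n\|\rho u_n\|^2_{H^1}\le\epsilon_0$, hence $\|u_n\|^2_{L^6(\mathrm{supp}\,\rho')}\le C\epsilon_0$ small, after which the spinor estimate followed once more by the scalar one closes the argument. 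In every case $z_n\to 0$ in $H^1(B_{r_1}(x_0))\times H^{\frac12}(\Sigma B_{r_1}(x_0))$, with $r_1$ the innermost radius.

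\noindent
The point I expect to be delicate is the localized spinor estimate: because the quadratic form $\psi\mapsto\int_M\langle D_g\psi,\psi\rangle\,dv_g$ is strongly indefinite, testing \eqref{eqp} against $\rho'^2\psi_n$ does not recover a norm, and one is forced to test against $(P^+-P^-)(\rho'\psi_n)$ and to absorb both the Clifford commutator term $\nabla\rho'\cdot\psi_n$ (harmless since $\psi_n\to 0$ in $L^2$) and the $\rho'\delta_n$ term into the left-hand side. The extra two-step bootstrap needed when only $\Sigma_3$ is excluded---where a priori neither $\|u_n\|_{L^6}$ nor $\|\psi_n\|_{L^3}$ is small near $x_0$---together with the precise tracking of Sobolev constants in the choice of $\epsilon_0$, should then be routine.
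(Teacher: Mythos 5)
Your proof is correct and follows essentially the same route as the paper: the same three-case analysis, the same absorption of the critical cubic term once $\epsilon_0$ is chosen small, and the same bootstrap feeding one component's local smallness into the other's estimate. The only (immaterial) differences are technical: where you test the equations against $\rho^2 u_n$ and $(P^{+}-P^{-})(\rho'\psi_n)$, the paper applies the elliptic estimates $\|\rho u_n\|_{H^{1}}\leq C\|L_g(\rho u_n)\|_{H^{-1}}$ and $\|\eta\psi_n\|_{H^{\frac{1}{2}}}\leq C\|D_g(\eta\psi_n)\|_{H^{-\frac{1}{2}}}$ --- the latter resting on $\ker D_g=\{0\}$, which is exactly what your identity $\int_M\langle D_g\phi,(P^{+}-P^{-})\phi\rangle\,dv_g=\|\phi\|^2$ encodes --- and in the case $x_0\notin\Sigma_3$ the paper closes by combining the two reciprocal inequalities $\|\rho\psi_n\|\leq C\epsilon^{\frac{1}{2}}\|\rho u_n\|_{H^1}+o(1)$ and $\|\rho u_n\|_{H^1}\leq C\epsilon^{\frac{1}{2}}\|\rho\psi_n\|+o(1)$ rather than bootstrapping from your crude bound.
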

\begin{proof}
We will prove this result by contradiction, by assuming that for every $\epsilon>0$, there exists $x_{0}\not \in \Sigma_{1}\cap \Sigma_{2} \cap \Sigma_{3}$, such that for every $r>0$, $z_{n}\not \to 0$ in $H^{1}(B_{r}(x_{0}))\times H^{\frac{1}{2}}(\Sigma B_{r}(x_{0}))$.\\

\noindent
\emph{Case I}: $x_{0}\not \in \Sigma_{1}$.\\
Given $\epsilon>0$, there exists $r>0$ such that $\int_{B_{4r}(x_{0})}|u_{n}|^{6}dv_{g}<\epsilon$. We first estimate the $\psi$ component. That is, we consider a smooth cut off function $\eta$ supported on $B_{4r}(x_{0})$ and equals to $1$ on $B_{2r}(x_{0})$, then by (\ref{eqp}) we have:
\begin{align}
D_g(\eta\psi_{n})&=\eta D_g\psi_{n}+\nabla \eta \cdot \psi_{n}\notag \\
&=\eta |u_{n}|^{2}\psi_{n}+\nabla \eta \cdot \psi_{n}+\eta\delta_{n}, \notag
\end{align}
where $\|\delta_{n}\|_{H^{-\frac{1}{2}}}\to 0$. Hence
\begin{align}
\|\eta\psi_{n}\|_{H^{\frac{1}{2}}}&\leq C_{1}\|\eta |u_{n}|^{2}\psi_{n}+\nabla \eta \cdot \psi_{n}+\eta\delta_{n}\|_{H^{-\frac{1}{2}}}\notag\\
&\leq C_{2}\left(\|\eta|u_{n}|^{2}\psi_{n}\|_{L^{\frac{3}{2}}}+ \|\psi_{n}\|_{L^{\frac{3}{2}}}+\|\delta_{n}\|_{H^{-\frac{1}{2}}}\right) .\notag
\end{align}
Since $\|\psi_{n}\|_{L^{\frac{3}{2}}}\to 0$, it remains to estimate
\begin{align}
\|\eta |u_{n}|^{2}\psi_{n}\|_{L^{\frac{3}{2}}}&\leq \|u_{n}\|_{L^{6}(B_{4r}(x_{0}))}^{2}\|\eta \psi_{n}\|_{L^{3}} \leq C_{3}\epsilon^{\frac{1}{3}} \|\eta \psi_{n}\|_{H^{\frac{1}{2}}} .\notag
\end{align}
Hence, taking $C_{3}\epsilon<\frac{1}{2}$, we deduce that $\eta \psi_{n}\to 0$ in $H^{\frac{1}{2}}$, yielding
$$\|\eta \psi_{n}\|_{L^{3}}\to 0.$$
Now we estimate the $u$ component. We consider a smooth cut off function $\rho$ supported on $B_{2r}(x_{0})$ and equals to $1$ on $B_{r}(x_{0})$, then by (\ref{equ}) we have
\begin{align}
L_{g} (\rho u_{n})&=\rho L_{g} u_{n}- u_{n}\Delta_{g} \rho-2g(\nabla \rho, \nabla u_{n})\notag \\
&=\rho|\psi_{n}|^{2}u_{n}- u_{n}\Delta_{g} \rho-2\nabla \rho \cdot \nabla u_{n}+\rho \varepsilon_{n}\notag
\end{align}
where $\varepsilon_{n}\to 0$ in $H^{-1}(M)$. From elliptic estimates now we have that
\begin{align}
\|\rho u_{n}\|_{H^{1}}&\leq C\| \rho|\psi_{n}|^{2 }u_{n}- u_{n}\Delta \rho+2g(\nabla \rho , \nabla u_{n})+\rho \varepsilon_{n}\|_{H^{-1}}\notag\\
&\leq C_{1}\left(\|\rho|\psi_{n}|^{2}u_{n}\|_{L^{\frac{6}{5}}}+\|u_{n}\Delta \rho\|_{L^{\frac{6}{5}}}+2\|g(\nabla \rho , \nabla u_{n})\|_{H^{-1}}\right).\notag
\end{align}
First we estimate $\|\rho|\psi_{n}|^{2}u_{n}\|_{L^{\frac{6}{5}}}$:
\begin{align}
\|\rho|\psi_{n}|^{2}u_{n}\|_{L^{\frac{6}{5}}}&\leq \| \eta \psi_{n}\|^{2}_{L^{3}}\|\rho u_{n}\|_{L^{6}} \leq C_{1}\| \eta \psi_{n}\|^{2}_{H^{\frac{1}{2}}}\|\rho u_{n}\|_{H^{1}} . \notag
\end{align}
From the previous estimates, for $n$ big enough we have that $CC_{1}\| \eta \psi_{n}\|^{2}_{H^{\frac{1}{2}}}<\frac{1}{2}$. Thus we have that
$$\|\rho u_{n}\|_{H^{1}}\leq \|u_{n}\Delta \rho\|_{L^{\frac{6}{5}}}+2\|g(\nabla \rho , \nabla u_{n})\|_{H^{-1}} .$$
Now clearly
$$\|u_{n}\Delta \rho\|_{L^{\frac{6}{5}}}\leq C\|u_{n}\|_{L^{2}},$$
and the term
$$\|g(\nabla \rho , \nabla u_{n})\|_{H^{-1}}\leq \sup_{k\in H^{1};\|k\|_{H^{1}}\leq 1}\left|\int_{M}g(\nabla\rho,\nabla u_{n})kdv_{g}\right| .$$
But
\begin{align}
\left|\int_{M} g(\nabla \rho , \nabla u_{n})kdv_{g}\right|&\leq \left|\int_{M}u_{n}\left(k\Delta \rho+g(\nabla \rho, \nabla k ) \right)dv_{g}\right|\leq C\|k\|_{H^{1}}\|u_{n}\|_{L^{2}} \to 0 .\notag
\end{align}
Hence $\rho u_{n}$ converges to zero in $H^{1}(B_{r}(x_{0}))$ and this leads to a contradiction.\\

\noindent
\emph{Case II}: $x_{0}\not \in \Sigma_{2}$.\\
Given $\epsilon>0$, there exists $r>0$ such that $\int_{B_{4r}(x_{0})}|\psi_{n}|^{3}dv_{g}<\epsilon$. Then again we compute
\begin{align}
L_{g} (\rho u_{n})&=\rho (L_{g} u_{n})- u_{n}\Delta_{g} \rho -2g(\nabla \rho , \nabla u_{n})\notag \\
&=\rho|\psi_{n}|^{2}u_{n}- u_{n}\Delta_{g} \rho-2g(\nabla \rho , \nabla u_{n})+\rho \varepsilon_{n} ,\notag
\end{align}
where $\varepsilon_{n}\to 0$ in $H^{-1}$. From elliptic estimates now we have that
\begin{align}
\|\rho u_{n}\|_{H^{1}}&\leq C\|\rho|\psi_{n}|^{2}u_{n}- u_{n}\Delta_g \rho+2g(\nabla \rho_{1} , \nabla u_{n})+\rho \varepsilon_{n}\|_{H^{-1}}\notag\\
&\leq C_{1}\left(\|\rho|\psi_{n}|^{2}u_{n}\|_{L^{\frac{6}{5}}}+\|u_{n}\Delta_{g} \rho\|_{L^{\frac{6}{5}}}+2\|g(\nabla \rho, \nabla u_{n})\|_{H^{-1}}\right) .\notag
\end{align}
Again, we estimate
\begin{align}
\|\rho|\psi_{n}|^{2}u_{n}\|_{L^{\frac{6}{5}}}&\leq \| \psi_{n}\|^{2}_{L^{3}}\|\rho u_{n}\|_{L^{6}}\leq C_{1}\epsilon^{\frac{2}{3}}\|\rho u_{n}\|_{H^{1}} .\notag
\end{align}
Taking $C_{1}C\epsilon<\frac{1}{2}$, we have that
$$\|\rho u_{n}\|_{H^{1}}\leq C_{1}\left(\|u_{n}\Delta_g \rho\|_{L^{\frac{6}{5}}}+2\|g(\nabla \rho , \nabla u_{n})\|_{H^{-1}} +\|\rho\varepsilon_{n}\|_{H^{-1}}\right),$$
and as in the previous case we have that
$$\|u_{n}\Delta_g \rho\|_{L^{\frac{6}{5}}}+2\|g(\nabla \rho , \nabla u_{n})\|_{H^{-1}} +\|\rho\varepsilon_{n}\|_{H^{-1}}\to 0.$$
Hence $\|\rho u_{n}\|_{H^{1}}\to 0$. Next, we estimate the spinorial component:
\begin{align}
\|\eta\psi_{n}\|_{H^{\frac{1}{2}}}&\leq C_{1}\|\eta |u_{n}|^{2}\psi_{n}+\nabla \eta \cdot \psi_{n}+\eta \delta_{n}\|_{H^{-\frac{1}{2}}}\notag\\
&\leq C_{2}\left(\|\eta |u_{n}|^{2}\psi_{n}\|_{L^{\frac{3}{2}}}+ \|\psi_{n}\|_{L^{\frac{3}{2}}}+\|\delta_{n}\|_{H^{-\frac{1}{2}}}\right).\notag
\end{align}
But
$$\|\eta |u_{n}|^{2}\psi_{n}\|_{L^{\frac{3}{2}}}\leq \|\rho u_{n}\|_{L^{6}}^{2}\|\eta \psi_{n}\|_{H^{\frac{1}{2}}}.$$
Using the fact that $\|\rho u_{n}\|_{H^{1}}\to 0$, we have that $z_{n}\to 0$ in $H^{1}(B_{r}(x_{0}))\times H^{\frac{1}{2}}(\Sigma B_{r}(x_{0}))$, yielding a contradiction.\\

\noindent
\emph{Case III}: $x_{0}\not\in \Sigma_{3}$.\\
Again, given $\epsilon>0$, let $r>0$ so that $\int_{B_{2r}(x_{0})}|u_{n}|^{2}|\psi_{n}|^{2}dv_{g}<\epsilon$. We have that
$$\|\rho \psi_{n}\|_{H^{\frac{1}{2}}} \leq C_{2}\left(\|\rho |u_{n}|^{2}\psi_{n}\|_{L^{\frac{3}{2}}}+o(1)\right).$$
But
$$\|\rho |u_{n}|^{2}\psi_{n}\|_{L^{\frac{3}{2}}}\leq \left(\int_{B_{2r}(x_{0})}|u_{n}|^{2}|\psi_{n}|^{2}dv_{g}\right)^{\frac{1}{2}}\|\rho u_{n}\|_{L^{6}} ,$$
thus
$$\|\rho\psi_{n}\|_{H^{\frac{1}{2}}} \leq C\epsilon^{\frac{1}{2}}\|\rho u_{n}\|_{H^{1}}+o(1).$$
Similarly, we have for the  $u$ component,
$$\|\rho u_{n}\|_{H^{1}}\leq C \|\rho |\psi_{n}|^{2}u_{n}\|_{L^{\frac{6}{5}}}+o(1),$$
and
$$\|\rho |\psi_{n}|^{2}u_{n}\|_{L^{\frac{6}{5}}}\leq \left(\int_{B_{2r}(x_{0})}|u_{n}|^{2}|\psi_{n}|^{2}dv_{g}\right)^{\frac{1}{2}}\|\rho \psi_{n}\|_{L^{3}}. $$
Hence
$$\|\rho u_{n}\|_{H^{1}}\leq C\epsilon^{\frac{1}{2}}\|\rho \psi_{n}\|_{H^{\frac{1}{2}}}+o(1).$$
Combining both the previous inequalities we have $\rho z_{n}\to 0$ in $H^1B_{r}(x_{0})\times H^{\frac{1}{2}}\Sigma B_{r}(x_{0})$, leading to a contradiction.
\end{proof}

\noindent
From the previous lemma we deduce the following properties.
\begin{corollary}
If $(z_{n})$ does not satisfy the (PS) condition, then
$$\Sigma_{1}= \Sigma_{2} =\Sigma_{3}\not=\emptyset.$$
\end{corollary}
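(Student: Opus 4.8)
The plan is to deduce the statement almost entirely from the preceding lemma; the point is to notice that its conclusion is slightly stronger than its hypothesis suggests. First I would record that whenever $x_{0}\notin\Sigma_{1}\cap\Sigma_{2}\cap\Sigma_{3}$, the lemma gives an $r>0$ with $z_{n}\to 0$ in $H^{1}(B_{r}(x_{0}))\times H^{\frac{1}{2}}(\Sigma B_{r}(x_{0}))$, and by the continuity of the embeddings $H^{1}(B_{r}(x_{0}))\hookrightarrow L^{6}$ and $H^{\frac{1}{2}}(\Sigma B_{r}(x_{0}))\hookrightarrow L^{3}$ together with H\"older's inequality this forces $\int_{B_{\rho}(x_{0})}|u_{n}|^{6}dv_{g}\to 0$, $\int_{B_{\rho}(x_{0})}|\psi_{n}|^{3}dv_{g}\to 0$ and $\int_{B_{\rho}(x_{0})}|u_{n}|^{2}|\psi_{n}|^{2}dv_{g}\to 0$ for every $\rho<r$. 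Hence $x_{0}$ lies in none of $\Sigma_{1},\Sigma_{2},\Sigma_{3}$; in other words the complement of $\Sigma_{1}\cap\Sigma_{2}\cap\Sigma_{3}$ is contained in the complement of $\Sigma_{1}\cup\Sigma_{2}\cup\Sigma_{3}$, i.e. $\Sigma_{1}\cup\Sigma_{2}\cup\Sigma_{3}\subseteq\Sigma_{1}\cap\Sigma_{2}\cap\Sigma_{3}$. Since the reverse inclusion is trivial and $\Sigma_{1}\cap\Sigma_{2}\cap\Sigma_{3}\subseteq\Sigma_{i}\subseteq\Sigma_{1}\cup\Sigma_{2}\cup\Sigma_{3}$ for each $i$, all these sets coincide, so $\Sigma_{1}=\Sigma_{2}=\Sigma_{3}$.

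It then remains to see that the common set $\Sigma$ is non-empty, which I would prove by contradiction using the reduction already in force: our (PS) sequence has weak limit $z_{\infty}=0$ and $z_{n}\not\to 0$ in $\mathcal{H}$ (otherwise the (PS) condition would hold). If $\Sigma=\emptyset$ then $\Sigma_{1}\cap\Sigma_{2}\cap\Sigma_{3}=\emptyset$, so by the lemma each $x\in M$ admits a ball $B_{r_{x}}(x)$ on which $z_{n}\to 0$. By compactness of $M$ we extract a finite subcover $B_{r_{x_{1}}}(x_{1}),\dots,B_{r_{x_{N}}}(x_{N})$, take a subordinate smooth partition of unity $\{\chi_{j}\}$, and write $u_{n}=\sum_{j}\chi_{j}u_{n}$, $\psi_{n}=\sum_{j}\chi_{j}\psi_{n}$. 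Since multiplication by a fixed smooth compactly supported function is bounded on $H^{1}$ and on $H^{\frac{1}{2}}$, each $\chi_{j}u_{n}\to 0$ in $H^{1}(M)$ and each $\chi_{j}\psi_{n}\to 0$ in $H^{\frac{1}{2}}(\Sigma M)$, whence $z_{n}\to 0$ in $\mathcal{H}$, a contradiction. Therefore $\Sigma\neq\emptyset$ and the corollary follows.

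The only mildly delicate point is the gluing step for the spinorial component: because $H^{\frac{1}{2}}$ is a non-local space one should be careful about the precise meaning of local $H^{\frac{1}{2}}$-convergence and about the boundedness of the cut-off operators, but this is precisely the type of localization already used in the regularity section and in \cite{I}, so it poses no genuine obstacle. Apart from that, the statement is a formal consequence of the preceding lemma.
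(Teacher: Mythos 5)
Your proof is correct, and it is exactly the deduction the paper intends: the paper states this corollary without proof as an immediate consequence of the preceding lemma, and your argument (local strong convergence off $\Sigma_{1}\cap\Sigma_{2}\cap\Sigma_{3}$ forces the three local integrals below $\epsilon_{0}$, giving the equality of the sets; a finite cover plus cut-offs upgrades local to global strong convergence, giving non-emptiness under the standing reduction $z_{n}\rightharpoonup 0$, $z_{n}\not\to 0$) is the natural way to fill in the omitted details. Your caveat about localization in the non-local space $H^{\frac{1}{2}}$ is well taken but harmless, since the lemma's conclusion is already phrased via cut-offs ($\eta\psi_{n}\to 0$ in $H^{\frac{1}{2}}$) and multiplication by fixed smooth functions is bounded there.
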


\begin{corollary}
Let $(z_{n})$ be a (PS) sequence at the level $c$. If $c<\frac{\epsilon_{0}}{2}$ then $z_{n}$ converges strongly to zero.
\end{corollary}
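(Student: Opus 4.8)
The strategy is to argue by contradiction: suppose $(z_n)$ is a (PS) sequence at level $c < \tfrac{\epsilon_0}{2}$ that does not converge strongly to zero. After the reductions already carried out in this section (boundedness of the (PS) sequence, splitting off the weak limit $z_\infty$, and reducing to the case where $z_n \rightharpoonup 0$ weakly and $z_n \to 0$ strongly in $L^p(M) \times L^q(\Sigma M)$ for $p<6$, $q<3$), failure of strong convergence to zero means exactly that $(z_n)$ does not satisfy the (PS) condition in the reduced setting. By the previous Corollary, this forces $\Sigma_1 = \Sigma_2 = \Sigma_3 \neq \emptyset$; in particular there is a concentration point $x_0 \in \Sigma_3$.

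The key quantitative input is the energy identity (\ref{lev}), namely $\int_M |u_n|^2|\psi_n|^2\, dv_g = 2c + o(\|z_n\|) = 2c + o(1)$, since $\|z_n\|$ is bounded. Now fix any $r>0$. Since $x_0 \in \Sigma_3$, by definition of $\Sigma_3$ we have
$$\liminf_{r \to 0}\liminf_{n\to\infty}\int_{B_r(x_0)}|u_n|^2|\psi_n|^2\, dv_g \geq \epsilon_0.$$
Hence for every sufficiently small $r$,
$$\liminf_{n\to\infty}\int_{B_r(x_0)}|u_n|^2|\psi_n|^2\, dv_g \geq \epsilon_0 - o_r(1),$$
so passing to a subsequence and then letting $r \to 0$ we extract, for each small $r$, a tail of indices along which $\int_{B_r(x_0)}|u_n|^2|\psi_n|^2 \geq \epsilon_0 - o_r(1)$. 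Combining this with the global identity $\int_M |u_n|^2|\psi_n|^2 = 2c + o(1)$ and the fact that $|u_n|^2|\psi_n|^2 \geq 0$, we get (along the subsequence, for $r$ small and $n$ large)
$$2c + o(1) = \int_M |u_n|^2|\psi_n|^2\, dv_g \geq \int_{B_r(x_0)}|u_n|^2|\psi_n|^2\, dv_g \geq \epsilon_0 - o_r(1).$$
Letting $n \to \infty$ and then $r \to 0$ yields $2c \geq \epsilon_0$, i.e. $c \geq \tfrac{\epsilon_0}{2}$, contradicting the hypothesis $c < \tfrac{\epsilon_0}{2}$. Therefore $(z_n)$ must satisfy the (PS) condition in the reduced setting, which — recalling that we split off $z_\infty$ and here $z_\infty = 0$ because we had already reduced to $z_n \rightharpoonup 0$ — means $z_n \to 0$ strongly in $\mathcal{H}$.

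The only mild subtlety — and the step I would be most careful about — is the bookkeeping of the nested $\liminf$'s: one must make sure that the contradiction is extracted along a single subsequence on which both the global energy converges to $2c$ and the local mass near $x_0$ stays bounded below. This is handled by a standard diagonal argument over a sequence $r_j \to 0$, using that for each fixed $j$ the inner $\liminf_{n}$ already secures the lower bound $\epsilon_0 - o(1)$ for all large $n$; no new analytic ingredient beyond (\ref{lev}) and the definition of $\Sigma_3$ is needed. (Alternatively, one can bypass the contradiction entirely: the same computation shows directly that any nonzero-concentrating (PS) sequence has $c \geq \tfrac{\epsilon_0}{2}$, so $c < \tfrac{\epsilon_0}{2}$ excludes concentration, hence forces strong convergence by Lemma on the local vanishing together with a covering argument over $M$.)
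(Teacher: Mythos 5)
Your proof is correct and is essentially the paper's argument in contrapositive form: both rest on the identity $\int_{M}|u_{n}|^{2}|\psi_{n}|^{2}dv_{g}=2c+o(1)$ from (\ref{lev}), which for $2c<\epsilon_{0}$ makes the total mass too small for any point to lie in $\Sigma_{3}$, so the preceding corollary forces strong convergence. The paper simply states this directly without the contradiction/subsequence bookkeeping, which, as you note, is not actually needed.
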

\begin{proof}
The proof follows from the boundedness of the (PS) sequences. Indeed, from $(\ref{lev})$ we have
$$\int_{M}|u_{n}|^{2}|\psi_{n}|^{2}dv_{g}=2c+o(1).$$
Hence if $2c<\epsilon_0$, we have that for $n$ big enough,
$$\int_{M}|u_{n}|^{2}|\psi_{n}|^{2}dv_{g}<\epsilon_0,$$
thus $z_{n}\to 0$.
\end{proof}

\noindent
Now, for a given (PS) sequence $(z_{n})$, we define the concentration function $Q_{n}$ for $r> 0$ by
$$Q_{n}(r)=\sup_{x\in M} \int_{B_{r}(x)}|u_{n}|^{2}|\psi_{n}|^{2}dv_{g} .$$
We explicitly notice that one can define equivalently the $\sup$ on the integrals relative to $\Sigma_{1}$ and $\Sigma_{2}$. We choose $\epsilon>0$ so that $3\epsilon<\epsilon_{0}$, then if $\Sigma_3\not=0$, we have the existence of $x_{n}\in M$ and $R_{n}\to 0$ such that
$$Q_{n}(R_{n})=\int_{B_{x_{n}}(R_{n})}|u_{n}|^{2}|\psi_{n}|^{2}dv_{g}=\epsilon .$$
Without loss of generality, we can always assume that $x_{n}\to x_{0}$ and $i(M)\geq 3$, where $i(M)$ is the injectivity radius of $M$. Also, we define the map $\rho_{n}(x)=exp_{x_{n}}(R_{n}x)$ for $x\in \R^{3}$ such that $R_{n}|x|<3$; we denote also $\sigma_{n}=\rho_{n}^{-1}$. We let $B_{R}^{0}$ denote the Euclidian ball centered at zero and with radius $R$. That is,
$$B_{R}^{0}=\{x\in \R^{3}; |x|< R\}.$$
We can then consider the metric $g_{n}$ on $B_{R}^{0}$ defined by a suitable rescaled of the pull-back of $g$:
$$g_{n}=R_{n}^{-2}\rho_{n}^{*}g .$$
Clearly, the two Riemannian patches $(B_{R}^{0},g_{n})$ and $(B_{RR_{n}}(x_{n}),g)$ are conformally equivalent for $n$ large enough and $g_{n}\to g_{\R^{3}}$ in $C^{\infty}(B_{R}^{0})$. We consider now the identification map  (see \cite{Bour})
$$(\rho_{n})_{*}:\Sigma_{p}(B_{R}^{0},g_{n})\to \Sigma_{\rho_{n}(p)}(B_{RR_{n}}(x_{n}),g),$$
and we set
$$\rho_{n}^{*}(\varphi)=(\rho_{n})_{*}^{-1}\circ \varphi \circ \rho_{n}.$$
Using these maps, we can define the spinors $\Psi_{n}$ on $\Sigma B_{R}^{0}$ by
$$\Psi_{n}=R_{n}\rho_{n}^{*}\psi_{n} ,$$
and from the conformal change of the Dirac operator, we have that
$$D_{g_{n}}\Psi_{n}=R_{n}^{2}\rho_{n}^{*}D_{g}\psi_{n} .$$
So we get:
$$\int_{B_{R}^{0}}\langle D_{g_{n}}\Psi_{n},\Psi_{n}\rangle dv_{g_{n}}=\int_{B_{RR_{n}}(x_{n})}\langle D_{g}\psi_{n},\psi_{n}\rangle dv_{g},$$
$$\int_{B_{R}^{0}}|\Psi_{n}|^{3}dv_{g_{n}}=\int_{B_{RR_{n}}(x_{n})}|\psi_{n}|^{3}dv_{g}.$$
Now we consider the $u$ component, that is we define
$$U_{n}=R_{n}^{\frac{1}{2}}\rho_{n}^{*}u_{n} ,$$
so that by conformal change of the conformal Laplacian, we have:
$$L_{g_{n}}U_{n}=R_{n}^{\frac{5}{2}}\rho^{*}_{n}L_{g}u_{n} .$$
Hence
$$\int_{B_{R}^{0}}U_{n}L_{g_{n}}U_{n}dv_{g_{n}}=\int_{B_{RR_{n}}(x_{n})}u_{n}L_{g}u_{n}dv_{g},$$

\begin{equation}\label{e1}
\int_{B_{R}^{0}}|U_{n}|^{6}dv_{g_{n}}=\int_{B_{RR_{n}}(x_{n})}|u_{n}|^{6}dv_{g},
\end{equation}
and
$$\int_{B_{R}^{0}}|U_{n}|^{2}|\Psi_{n}|^{2}dv_{g_{n}}=\int_{B_{RR_{n}}(x_{n})}|u_{n}|^{2}|\psi_{n}|^{2}dv_{g}.$$
We have the following:
\begin{lemma}
Let us set
$$F_{n}=L_{g_{n}}U_{n}-|\Psi_{n}|^{2}U_{n}, \qquad H_{n}=D_{g_{n}}\Psi_{n}-|U_{n}|^{2}\Psi_{n} .$$
Then
$$F_{n}\to 0  \text{ in } H^{-1}_{loc}(\R^{3}), \qquad  H_{n}\to 0   \text{ in } H^{-\frac{1}{2}}_{loc}(\Sigma \R^{3}) .$$
\end{lemma}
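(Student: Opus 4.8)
The plan is to identify $F_n$ and $H_n$ explicitly and then reduce everything to a scaling computation. First, since $(u_n,\psi_n)$ is a (PS) sequence, $(\ref{equ})$ and $(\ref{eqp})$ give $L_gu_n=|\psi_n|^2u_n+\varepsilon_n$ and $D_g\psi_n=|u_n|^2\psi_n+\delta_n$ with $\varepsilon_n\to0$ in $H^{-1}(M)$ and $\delta_n\to0$ in $H^{-\frac12}(\Sigma M)$. Combining the two conformal rescaling identities already recorded, $L_{g_n}U_n=R_n^{\frac52}\rho_n^*(L_gu_n)$ and $D_{g_n}\Psi_n=R_n^2\rho_n^*(D_g\psi_n)$, with the pointwise relations $\rho_n^*(|\psi_n|^2u_n)=R_n^{-\frac52}|\Psi_n|^2U_n$ and $\rho_n^*(|u_n|^2\psi_n)=R_n^{-2}|U_n|^2\Psi_n$ — which follow from $\Psi_n=R_n\rho_n^*\psi_n$, $U_n=R_n^{\frac12}\rho_n^*u_n$ and the fact that the identification $(\rho_n)_*$ of \cite{Bour} is a fibrewise isometry — I get the clean formulas
\[
F_n=R_n^{\frac52}\,\rho_n^*\varepsilon_n ,\qquad H_n=R_n^{2}\,\rho_n^*\delta_n .
\]
So the claim becomes: these transplanted, rescaled remainders tend to zero in the stated local negative Sobolev spaces.

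To see this, I would fix $R>0$ and test $F_n$ against $\varphi\in C_c^\infty(B_R^0)$ and $H_n$ against a smooth spinor $\phi$ compactly supported in the interior of $\Sigma B_R^0$. Since $dv_{g_n}=R_n^{-3}\rho_n^*(dv_g)$, naturality of the pullback and the change of variables $\rho_n$ give, for the corresponding duality pairings,
\[
\langle F_n,\varphi\rangle_{g_n}=R_n^{-\frac12}\langle\varepsilon_n,\varphi\circ\sigma_n\rangle ,\qquad
\langle H_n,\phi\rangle_{g_n}=R_n^{-1}\langle\delta_n,(\rho_n^*)^{-1}\phi\rangle ,
\]
where $\varphi\circ\sigma_n\in H^1(M)$ and $(\rho_n^*)^{-1}\phi\in H^{\frac12}(\Sigma M)$ are extended by zero outside $B_{RR_n}(x_n)$; hence
\[
|\langle F_n,\varphi\rangle_{g_n}|\le R_n^{-\frac12}\|\varepsilon_n\|_{H^{-1}(M)}\|\varphi\circ\sigma_n\|_{H^{1}(M)} ,\qquad
|\langle H_n,\phi\rangle_{g_n}|\le R_n^{-1}\|\delta_n\|_{H^{-\frac12}(\Sigma M)}\|(\rho_n^*)^{-1}\phi\|_{H^{\frac12}(\Sigma M)} .
\]
Then I would establish the uniform rescaling bounds $\|\varphi\circ\sigma_n\|_{H^{1}(M)}\le CR_n^{\frac12}\|\varphi\|_{H^{1}(B_R^0)}$ and $\|(\rho_n^*)^{-1}\phi\|_{H^{\frac12}(\Sigma M)}\le CR_n\|\phi\|_{H^{\frac12}(\Sigma B_R^0)}$; together with the inequalities above these give $\|F_n\|_{H^{-1}(B_R^0,g_n)}\le C\|\varepsilon_n\|_{H^{-1}(M)}\to0$ and $\|H_n\|_{H^{-\frac12}(\Sigma B_R^0,g_n)}\le C\|\delta_n\|_{H^{-\frac12}(\Sigma M)}\to0$, and since $g_n\to g_{\R^3}$ in $C^\infty(B_R^0)$ the $g_n$- and $g_{\R^3}$-Sobolev norms on $B_R^0$ are uniformly comparable, so $F_n\to0$ in $H^{-1}_{loc}(\R^3)$ and $H_n\to0$ in $H^{-\frac12}_{loc}(\Sigma\R^3)$.

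The rescaling bounds themselves are pure scaling. In geodesic normal coordinates at $x_n$ the map $\sigma_n$ is the linear dilation $y\mapsto R_n^{-1}y$; $g$ in these coordinates is uniformly $C^1$-close to $g_{\R^3}$ on $B_{RR_n}(x_n)$ as $x_n$ ranges over the compact $M$; and the identification of \cite{Bour} matches the Euclidean spinor trivialization up to errors controlled by $\|g_n-g_{\R^3}\|_{C^1(B_R^0)}\to0$. So one is reduced to the behaviour of Sobolev norms under the Euclidean dilation $\delta_{R_n}f(y)=f(R_n^{-1}y)$: $\|\delta_{R_n}f\|_{\dot{H}^s}=R_n^{(3-2s)/2}\|f\|_{\dot{H}^s}$ and $\|\delta_{R_n}f\|_{L^2}=R_n^{3/2}\|f\|_{L^2}$, which for $R_n\le1$ and $0<s\le1$ gives $\|\delta_{R_n}f\|_{H^s}\le R_n^{(3-2s)/2}\|f\|_{H^s}$; taking $s=1$ gives the factor $R_n^{1/2}$ and $s=\frac12$ the factor $R_n$, exactly as needed. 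For $s=1$ extension by zero into $M$ costs nothing because the $H^1$ norm localizes; for $s=\frac12$ it remains harmless because $\phi$ is supported strictly inside $B_R^0$, so $\delta_{R_n}\phi$ sits well inside $B_{RR_n}(x_n)$ and the extra boundary term in the $H^{1/2}_{00}$ norm is of order $R_n^2\|\phi\|_{L^2}^2$, compatible with the $R_n$ scaling.

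I expect the only delicate point to be this fractional, spinorial part: $H^{-\frac12}$ is nonlocal, so unlike the function component one cannot simply cut off and localize, and one must keep track at the same time of the shrinking ball, the borderline ($s=\frac12$) extension by zero, and the bundle identification of \cite{Bour}. The $H^{-1}$ half is essentially bookkeeping, since $H^1$ norms localize and extension by zero is an isometry onto its image.
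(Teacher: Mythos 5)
Your argument is correct and follows essentially the same route as the paper: identify $F_n=R_n^{\frac52}\rho_n^*\varepsilon_n$ and $H_n=R_n^{2}\rho_n^*\delta_n$, pull the duality pairing back to $M$ by change of variables, and control $\|R_n^{-\frac12}\sigma_n^*(F)\|_{H^1(M)}$ (resp.\ the $H^{\frac12}$ analogue) by the dilation scaling of Sobolev norms. The only difference is that you make explicit the scaling bounds and the fractional/spinorial case that the paper disposes of with ``a similar estimate holds for $H_n$'', which is a useful elaboration but not a different proof.
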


\noindent
Here the convergence in $H^{-1}_{loc}$ is understood in the sense that for all $R>0$,
$$\sup\left\{\langle F_{n}, F\rangle_{H^{-1},H^{1}}; F\in H^{1}(\R^{3}), \; \text{supp}(F)\subset B_{R}^{0},\; \|F\|_{H^{1}}\leq 1\right\}\to 0,$$
and similarly for $H_{n}$.
\begin{proof}
We first notice that by construction, we have that
$$L_{g_{n}}U_{n}-|\Psi_{n}|^{2}U_{n}=R_{n}^{\frac{5}{2}}\rho_{n}^{*}(L_{g}u_{n}-|\psi_{n}|^{2}u_{n}) .$$
Hence we get
$$F_{n}=R_{n}^{\frac{5}{2}}\rho_{n}^{*}(\varepsilon_{n}) ,$$
and similarly
$$H_{n}=R_{n}^{2}\rho_{n}^{*}(\delta_{n}) .$$
Now we consider $F\in H^{1}(\R^{3})$ such that $supp(F)\subset B_{R}^{0}$ and $\|F\|_{H^{1}}\leq 1$. Since $R_{n}\to 0$, then for $n$ big enough we have that:
\begin{align}
\langle F_{n},F\rangle_{H^{-1},H^{1}} &=\int_{B^{0}_{R_{n}^{-1}}}F_{n}F dv_{g_n}\notag \\
&=\int_{B^{0}_{R_{n}^{-1}}}\rho^{*}_{n}(\varepsilon_{n})R_{n}^{\frac{5}{2}} F dv_{g_{n}}\notag\\
&=\int_{B^{0}_{R_{n}^{-1}}}\rho^{*}_{n}(\varepsilon_{n})R_{n}^{-\frac{1}{2}} F dv_{\rho^{*}_{n}g}\notag\\
&=\int_{B_{1}(x_{n})}\varepsilon_{n}R_{n}^{-\frac{1}{2}}\sigma_{n}^{*}(F) dv_{g} .\notag
\end{align}
But we have that $\|R_{n}^{-\frac{1}{2}}\sigma_{n}^{*}(F)\|_{H^{1}}\leq C$, hence
$$\langle F_{n},F\rangle_{H^{-1},H^{1}}\to 0.$$
A similar estimate holds for $H_{n}$.
\end{proof}

\noindent
Now, let us re recall the spaces
$$D^{1}(\R^{3})=\left\{u\in L^{6}(\R^{3}); |\nabla u| \in L^{2}(\R^{3})\right\} $$
and
$$D^{\frac{1}{2}}(\Sigma\R^{3})=\left\{\psi \in L^{3}(\Sigma\R^{3});|\xi|^{\frac{1}{2}}|\widehat{\psi}|\in L^{2}(\R^{3})\right\},$$
where here $\widehat{\psi}$ is the Fourier transform of $\psi$. We have then the following:

\begin{lemma}
For $\epsilon>0$ small enough, there exist $U_{\infty}\in D^{1}(\R^{3})$ and $\Psi_{\infty}\in D^{\frac{1}{2}}(\Sigma \R^{3})$ such that $U_{n}\to U_{\infty}$ in $H^{1}_{loc}(\R^{3})$ and $\Psi_{n}\to \Psi_{\infty}$ in $H^{\frac{1}{2}}_{loc}(\Sigma\R^{3})$. Moreover they satisfy
\begin{equation}\label{eqR3}
\left\{\begin{array}{ll}
-\Delta_{g_{\R^3}} U_{\infty}=|\Psi_{\infty}|^{2}U_{\infty} \\
& \text{ on } \R^{3} .\\
D_{g_{\R^3}}\Psi_{\infty}=|U_{\infty}|^{2}\Psi_{\infty}
\end{array}
\right.
\end{equation}
\end{lemma}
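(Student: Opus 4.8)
The plan is a local blow-up analysis for the rescaled pair $(U_n,\Psi_n)$ on $\R^3$, in three steps. \emph{Step 1: uniform local bounds and weak limits.} Since $g_n\to g_{\R^3}$ in $C^\infty_{loc}(\R^3)$ and $R_{g_n}=R_n^2\rho_n^*R_g\to 0$ uniformly, on every fixed ball $B_R^0$ the operators $L_{g_n}$, $D_{g_n}$ are uniformly elliptic perturbations of $-\Delta_{g_{\R^3}}$, $D_{g_{\R^3}}$, so the interior elliptic estimates for compactly supported test objects hold with $n$-independent constants. Cutting off the rescaled equations $L_{g_n}U_n=|\Psi_n|^2U_n+F_n$ and $D_{g_n}\Psi_n=|U_n|^2\Psi_n+H_n$ with $\eta\in C^\infty_c(B_{2R}^0)$, $\eta\equiv1$ on $B_R^0$, the commutator terms are treated exactly as in the regularity theorem, while the borderline terms are controlled by the \emph{coupled} Hölder inequalities
$$\|\eta|\Psi_n|^2U_n\|_{L^{6/5}}\le\|\Psi_n\|_{L^3(B_{2R}^0)}\Big(\int_{B_{2R}^0}|U_n|^2|\Psi_n|^2\,dv_{g_n}\Big)^{1/2},\qquad \|\eta|U_n|^2\Psi_n\|_{L^{3/2}}\le\|U_n\|_{L^6(B_{2R}^0)}\Big(\int_{B_{2R}^0}|U_n|^2|\Psi_n|^2\,dv_{g_n}\Big)^{1/2}.$$
By the conformal identities already recorded, $\int_{B_R^0}|U_n|^6dv_{g_n}$, $\int_{B_R^0}|\Psi_n|^3dv_{g_n}$ and $\int_{B_R^0}U_nL_{g_n}U_ndv_{g_n}$ equal the corresponding integrals over $B_{RR_n}(x_n)\subset M$, hence are bounded uniformly in $n$ by the global bounds coming from boundedness of the (PS) sequence (the scalar--curvature part of the last integral being $o(1)$, since $\int_{B_R^0}|R_{g_n}|^{3/2}dv_{g_n}=\int_{B_{RR_n}(x_n)}|R_g|^{3/2}dv_g\to0$); covering $B_{2R}^0$ by a bounded number of unit $g_n$-balls, each the image of a geodesic ball of radius $\lesssim R_n$ on which $\int|u_n|^2|\psi_n|^2\le Q_n(R_n)=\epsilon$, gives $\int_{B_{2R}^0}|U_n|^2|\Psi_n|^2dv_{g_n}\le C(R)\epsilon$. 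Feeding these into the elliptic estimates yields $\|\eta U_n\|_{H^1}+\|\eta\Psi_n\|_{H^{1/2}}\le C(R)$ uniformly in $n$; a diagonal argument over an exhaustion $B_k^0\uparrow\R^3$ then produces, along a subsequence, $U_n\rightharpoonup U_\infty$ in $H^1_{loc}$ and $\Psi_n\rightharpoonup\Psi_\infty$ in $H^{1/2}_{loc}$, with $U_n\to U_\infty$ in $L^p_{loc}$ for $p<6$, $\Psi_n\to\Psi_\infty$ in $L^q_{loc}$ for $q<3$, and pointwise a.e.

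\emph{Step 2: passage to the limit.} Testing the rescaled equations against a fixed smooth compactly supported object and using the subcritical strong convergences together with $F_n\to0$, $H_n\to0$ (the previous lemma), $g_n\to g_{\R^3}$, $R_{g_n}\to0$, one gets that $(U_\infty,\Psi_\infty)$ solves $(\ref{eqR3})$ weakly. From Step~1, Fatou/weak lower semicontinuity and letting $R\to\infty$ give $U_\infty\in L^6(\R^3)$, $\nabla U_\infty\in L^2(\R^3)$, hence $U_\infty\in D^1(\R^3)$, and $\Psi_\infty\in L^3(\R^3)$; then $D_{g_{\R^3}}\Psi_\infty=|U_\infty|^2\Psi_\infty\in L^{3/2}(\R^3)$, so $\||D_{g_{\R^3}}|^{1/2}\Psi_\infty\|_{L^2}^2=\langle D_{g_{\R^3}}\Psi_\infty,\mathrm{sgn}(D_{g_{\R^3}})\Psi_\infty\rangle_{L^2}\le C\|U_\infty\|_{L^6}^2\|\Psi_\infty\|_{L^3}^2<\infty$ (as $\mathrm{sgn}(D_{g_{\R^3}})$ is bounded on $L^3$), i.e. $\Psi_\infty\in D^{1/2}(\Sigma\R^3)$. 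Running the argument of the regularity theorem on $\R^3$ also gives $U_\infty\in C^{2,\alpha}_{loc}$, $\Psi_\infty\in C^{1,\beta}_{loc}$, in particular $U_\infty,\Psi_\infty\in L^\infty_{loc}$; this is used in Step~3.

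\emph{Step 3: strong local convergence --- the heart of the matter.} Put $w_n=U_n-U_\infty$, $\chi_n=\Psi_n-\Psi_\infty$, so $w_n\rightharpoonup0$, $\chi_n\rightharpoonup0$ and both converge subcritically to $0$. Subtracting the two systems and using $F_n,H_n\to0$, $g_n\to g_{\R^3}$, $R_{g_n}\to0$ and $U_\infty,\Psi_\infty\in L^\infty_{loc}$ to absorb the differences $(|\Psi_n|^2-|\Psi_\infty|^2)U_\infty$ and $(|U_n|^2-|U_\infty|^2)\Psi_\infty$ (each $\to0$ in $L^s_{loc}$ for some $s>1$ by the subcritical convergences), one obtains $L_{g_n}w_n=|\Psi_n|^2w_n+o(1)$ in $H^{-1}_{loc}$ and $D_{g_n}\chi_n=|U_n|^2\chi_n+o(1)$ in $H^{-1/2}_{loc}$. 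Fix $y\in\R^3$, a small radius $r_0<1/4$, and a nested family of cutoffs $\rho_k$ with $\mathrm{supp}\,\rho_k\subset B_{r_k}^0(y)$, $\rho_k\equiv1$ on $B_{r_{k+1}}^0(y)$, $r_k\downarrow r_0/2$. Arguing exactly as in the third case of the lemma on the concentration sets $\Sigma_i$, the starting cutoff gives $\limsup_n(\|\rho_0w_n\|_{H^1}+\|\rho_0\chi_n\|_{H^{1/2}})\le C'\epsilon^{1/2}+C'\big(\|U_\infty\|_{L^\infty}^2\|\Psi_\infty\|_{L^2(B_{r_0}^0(y))}^2+\|\Psi_\infty\|_{L^\infty}^2\|U_\infty\|_{L^2(B_{r_0}^0(y))}^2\big)^{1/2}$, which is $<1$ once $\epsilon$ is small (recall $3\epsilon<\epsilon_0$ was arranged) and $r_0$ is small, while at every later step the self-coupling of the two elliptic estimates — via $\int_{\mathrm{supp}\rho_k}|\Psi_n|^2|w_n|^2\lesssim\|\chi_n\|_{L^3(B_{r_k}^0(y))}^2\|w_n\|_{L^6(B_{r_k}^0(y))}^2+o(1)$ and symmetrically — yields the quadratic bound $\limsup_na_k\le C_\ast(\limsup_na_{k-1})^2$ for $a_k=\|\rho_kw_n\|_{H^1}+\|\rho_k\chi_n\|_{H^{1/2}}$. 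Iterating, $\limsup_na_k\to0$, hence $w_n\to0$ in $H^1(B_{r_0/2}^0(y))$ and $\chi_n\to0$ in $H^{1/2}(\Sigma B_{r_0/2}^0(y))$; covering $\R^3$ by such balls gives $U_n\to U_\infty$ in $H^1_{loc}(\R^3)$, $\Psi_n\to\Psi_\infty$ in $H^{1/2}_{loc}(\Sigma\R^3)$, and with Step~2 this proves the statement.

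The main obstacle is precisely the closing of Step~3: the nonlinearities $|\Psi_n|^2U_n$, $|U_n|^2\Psi_n$ are critical for $H^1\hookrightarrow L^6$ and $H^{1/2}\hookrightarrow L^3$, so none of $\|\Psi_n\|_{L^3}$, $\|U_n\|_{L^6}$, $\|w_n\|_{L^6}$, $\|\chi_n\|_{L^3}$ is small a priori, and a naive Brezis--Lieb splitting of $\int|U_n|^2|\Psi_n|^2$ is useless at the borderline exponent. Strong convergence has to be extracted from the mutual feedback of the two elliptic estimates, powered by the smallness of the local $L^2L^2$-concentration $\le\epsilon$ (built into the choice of the blow-up scale) and, after subtracting the elliptically regular limit, by the smallness of $\|U_\infty\|_{L^2}$, $\|\Psi_\infty\|_{L^2}$ on small balls — a careful adaptation of the mechanism already used in the concentration lemma.
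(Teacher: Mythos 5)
Your proof is correct and follows the same overall strategy as the paper's: uniform local bounds from the conformal identities, weak limits solving the Euclidean system and lying in $D^{1}(\R^{3})\times D^{\frac{1}{2}}(\Sigma\R^{3})$, and strong local convergence obtained by subtracting the limit and absorbing the critical nonlinearity through the two coupled elliptic estimates, powered by the $\epsilon$-smallness of the local concentration built into the choice of the blow-up scale. The one genuine divergence is how Step 3 is closed. The paper replaces $(U_{n},\Psi_{n})$ by $(U_{n}-U_{\infty},\Psi_{n}-\Psi_{\infty})$ and then invokes $\int_{B_{1}^{0}}|U_{n}|^{2}|\Psi_{n}|^{2}dv_{g_{n}}\leq\epsilon$ to run a \emph{linear} absorption on unit balls, $\|\beta^{2}U_{n}\|_{H^{1}}\leq C\epsilon^{\frac{1}{2}}\|\beta^{2}\Psi_{n}\|_{L^{3}}+o(1)$ and symmetrically, which closes since $C^{2}\epsilon<1$; strictly speaking that concentration bound is only guaranteed for the unshifted sequence, and the subtraction produces the cross terms $\int|\Psi_{\infty}|^{2}|w_{n}|^{2}$ and $\int|U_{\infty}|^{2}|\chi_{n}|^{2}$ that you track explicitly. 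Your remedy --- shrink the ball so that the contribution of the locally bounded limit is small, then bootstrap with a quadratic iteration on nested cutoffs --- is a valid and in fact more careful treatment of exactly this point; the cost is the nested-cutoff bookkeeping, and one should record (as your argument implicitly does) that the constant $C_{*}$ in $\limsup_{n}a_{k}\leq C_{*}(\limsup_{n}a_{k-1})^{2}$ is independent of $k$ because all cutoff-dependent commutator terms are $o(1)$ in $n$ for each fixed $k$. Either absorption mechanism yields the lemma.
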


\begin{proof}
Since the sequence $Z_{n}=(U_n,\Psi_n)$ is bounded in $H^{1}_{loc}\times H^{\frac{1}{2}}_{loc}$, for every $\beta \in C^{\infty}_{0}(\R^{3})$, we have that $\beta Z_{n}$ is bounded in $H^{1}\times H^{\frac{1}{2}}$, hence there exist $U_{\infty}$ and $\Psi_{\infty}$ such that $U_{n}\rightharpoonup U_{\infty}$ in $H^{1}_{loc}$ and $U_{n}\to U_{\infty}$ strongly in $L_{loc}^{p}$ for $p<6$. Similarly $\Psi_{n}\rightharpoonup \Psi_{\infty}$ in $H_{loc}^{\frac{1}{2}}$ and strongly in $L_{loc}^{p}$ for $p<3$. Now we notice that from (\ref{e1}), we have that
$$\int_{B_{R}^{0}}|U_{n}|^{6}dv_{g_{n}}=\int_{B_{RR_{n}}(x_{n})}|u_{n}|^{6}dv_{g}.$$
Hence
\begin{equation}\label{e3}
\limsup_{n\to \infty}\int_{B_{R}^{0}}|U_{n}|^{6}dv_{g_{n}}\leq \sup_{n\geq 1}\int_{M}|u_{n}|^{6}dv_{g}<+\infty ,
\end{equation}
hence $U_{\infty}\in L^{6}(\R^{3})$ and similarly $\Psi_{\infty}\in L^{3}(\Sigma\R^{3})$. Also as in the proof of Proposition (4.1), we see that $(U_{\infty},\Psi_{\infty})$ satisfies equation (\ref{eqR3}); hence
$$\int_{\R^{3}}|\nabla U_{\infty}|^{2}dv_{g} <\infty$$
and $\nabla \Psi_{\infty}\in L^{\frac{3}{2}}(\Sigma\R^3)\subset H^{-\frac{1}{2}}(\Sigma\R^{3})$, which leads to the fact that $U_{\infty}\in D^{1}(\R^{3})$ and $\Psi_{\infty}\in D^{\frac{1}{2}}(\Sigma\R^{3})$. Now, using again Lemma (4.2), we can assume at this stage that $\Psi_{\infty}=0$ and $U_{\infty}=0$ by replacing $\Psi_{n}$ by $\Psi_{n}-\Psi_{\infty}$ and $U_{n}$ by $U_{n}-U_{\infty}$. Now let $x\in \R^{3}$, then by assumption we have that for $n$ big enough,
$$\int_{B_{1}^{0}}|U_{n}|^{2}|\Psi_{n}|^{2}dv_{g_n}\leq \epsilon.$$
Let $\beta \in C^{\infty}_{0}(\R^{3})$, then by elliptic regularity, we have that
\begin{align}
\|\beta^{2} U_{n}\|_{H^{1}}&\leq C\left(\|L_{g_{\R^{3}}}(\beta^{2}U_{n})\|_{H^{-1}}+\|\beta^{2}U_{n}\|_{L^{2}}\right)\\
&\leq C\left(\|L_{g_{n}}(\beta^{2}U_{n})\|_{H^{-1}}+\|(L_{g_{\R^{3}}}-L_{g_{n}})(\beta^{2}U_{n})\|_{H^{-1}}+\|\beta^{2}U_{n}\|_{L^{2}}\right).\notag
\end{align}
Now, we have that $\|\beta^{2}U_{n}\|_{L^{2}}\to 0$, and we want to estimate the term
$$\|(L_{g_{\R^{3}}}-L_{g_{n}})(\beta^{2}U_{n})\|_{H^{-1}}.$$
First, we have that for every $F\in H^{1}$:
$$\langle  (L_{g_{\R^{3}}}-L_{g_{n}})(\beta^{2}U_{n}),F \rangle_{H^{-1},H^{1}}=\langle \beta U_{n},\beta (L_{g_{\R^{3}}}-L_{g_{n}})^{*}F\rangle,$$
where $ (L_{g_{\R^{3}}}-L_{g_{n}})^{*}$ is the adjoint of $L_{g_{\R^{3}}}-L_{g_{n}}$ with respect to the metric $g_{\R^{3}}$. Now since $g_{n}\to g_{\R^{3}}$ in $C^{\infty}$, we have that
$$\|(L_{g_{\R^{3}}}-L_{g_{n}})(\beta \cdot )\|_{H^{2},L^{2}}\to 0,$$
and by duality
$$\|\beta(L_{g_{\R^{3}}}-L_{g_{n}})^{*}\|_{L^{2},H^{-2}}\to 0.$$
Similarly, we have also that
$$\|\beta(L_{g_{\R^{3}}}-L_{g_{n}})^{*}\|_{H^{2},L^{2}}\to 0,$$
therefore, by interpolation, we have that
\begin{equation}\label{e4}
\|\beta(L_{g_{\R^{3}}}-L_{g_{n}})^{*}\|_{H^{1},H^{-1}}\to 0.
\end{equation}
So we have that:
\begin{align}
|\langle  (L_{g_{\R^{3}}}-L_{g_{n}})(\beta^{2}U_{n}),F\rangle_{H^{-1},H^{1}}|&\leq C\|\beta U_{n}\|_{H^{1}}\|\beta (L_{g_{\R^{3}}}-L_{g_{n}})^{*}F\|_{H^{-1}}\notag \\
&\leq C\|\beta(L_{g_{\R^{3}}}-L_{g_{n}})^{*}\|_{H^{1},H^{-1}} \|F\|_{H^{1}},\notag
\end{align}
hence
$$\|(L_{g_{\R^{3}}}-L_{g_{n}})(\beta^{2}U_{n})\|_{H^{-1}}\to 0.$$
It remains to estimate the term $\|L_{g_{n}}(\beta^{2}U_{n})\|_{H^{-1}}$, but we have that
$$\|L_{g_{n}}(\beta^{2}U_{n})\|_{H^{-1}}\leq \|\beta^{2}(|\Psi_{n}|^{2}U_{n}+F_{n})\|_{H^{-1}}+o(1),$$
and from Lemma 4.6, we have that $\beta^{2}F_{n}\to 0$ in $H^{-1}$, therefore, we have that
$$\|\beta^{2} U_{n}\|_{H^{1}}\leq C\| \beta^{2}|\Psi_{n}|^{2}U_{n}\|_{H^{-1}}+o(1).$$
Now, if we take $supp(\beta)\in B_{1}^{0}$, we have that
\begin{align}
\|\beta^{2} U_{n}\|_{H^{1}}&\leq C\| \beta^{2}|\Psi_{n}|^{2}U_{n}\|_{L^{\frac{6}{5}}(B_{1}^{0})}+o(1)\notag \\
&\leq C\left( \int_{B_{1}^{0}}|U_{n}|^{2}|\Psi_{n}|^{2}dv_{g_{n}}\right)^{\frac{1}{2}}\|\beta^{2}\Psi_{n}\|_{L^{3}}+o(1)\notag\\
&\leq C\epsilon^{\frac{1}{2}}\|\beta^{2}\Psi_{n}\|_{L^{3}}+o(1) .\notag
\end{align}
A similar computation can be done to show that
$$\|\beta^{2} \Psi_{n}\|_{H^{\frac{1}{2}}}\leq  C\epsilon^{\frac{1}{2}}\|\beta^{2}U_{n}\|_{L^{6}}+o(1),$$
and combining these last two estimates we have that
$$\|\beta^{2}U_{n}\|_{H^{1}}+\|\beta^{2}\Psi_{n}\|_{H^{\frac{1}{2}}}\to 0.$$
\end{proof}

\noindent
It follows from this lemma in particular, since
$$\int_{B_{1}^{0}}|U_{n}|^{2}|\Psi_{n}|^{2}dv_{g_n}=Q(R_{n})=\epsilon,$$
that also
$$\int_{B_{1}^{0}}|U_{\infty}|^{2}|\Psi_{\infty}|^{2}dv_{g_{\R^{3}}}=\epsilon,$$
and hence $U_{\infty}\not=0$ and $\Psi_{\infty}\not=0$ and they satisfy equation (\ref{eqR3}); by the regularity results proved in the previous section, we have that $U_{\infty}\in C^{2,\alpha}(\R^{3})$ and $\Psi_{\infty}\in C^{1,\beta}(\Sigma\R^{3})$.\\
Now, we assume that $x_{n}\to x$ and  we consider a cut-off function $\beta=1$ on $B_{1}(x)$ and $supp(\beta)\subset B_{2}(x)$, we define then $v_{n}\in C^{2,\alpha}(M)$ and $\phi_{n}\in C^{1,\beta}(\Sigma M)$ by
\begin{equation}\label{e5}
v_{n}=R_{n}^{-\frac{1}{2}}\beta\sigma_{n}^{*}(U_\infty)
\end{equation}
and
\begin{equation}\label{e6}
\phi_{n}=R_{n}^{-1}\beta\sigma_{n}^{*}(\Psi_\infty) .
\end{equation}
We are going to prove the following
\begin{lemma}
Let  $\overline{u}_{n}=u_{n}-v_{n}$ and $\overline{\psi}_{n}=\psi_{n}-\phi_{n}$. Then, up to a subsequence, $\overline{u}_{n}\rightharpoonup 0$ in $H^{1}(M)$ and $\overline{\psi}_{n}\rightharpoonup 0$ in $H^{\frac{1}{2}}(\Sigma M)$.
\end{lemma}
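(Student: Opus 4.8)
The plan is to prove the stronger statement that the bubble profiles converge weakly to zero on their own, namely $v_n\rightharpoonup 0$ in $H^1(M)$ and $\phi_n\rightharpoonup 0$ in $H^{\frac12}(\Sigma M)$; since we have already reduced to a (PS) sequence with $u_n\rightharpoonup 0$ in $H^1(M)$ and $\psi_n\rightharpoonup 0$ in $H^{\frac12}(\Sigma M)$, this immediately yields $\overline u_n=u_n-v_n\rightharpoonup 0$ and $\overline\psi_n=\psi_n-\phi_n\rightharpoonup 0$.

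The first step is the uniform bound $\sup_n\big(\|v_n\|_{H^1(M)}+\|\phi_n\|_{H^{\frac12}(\Sigma M)}\big)<\infty$. For $v_n=R_n^{-\frac12}\beta\sigma_n^*(U_\infty)$ the hypothesis $Y_g(M)>0$ makes $L_g$ a positive operator, so $\|v_n\|_{H^1(M)}^2\le C\int_Mv_nL_gv_n\,dv_g$; using the conformal covariance of $L_g$, which gives $L_g(\sigma_n^*U_\infty)=R_n^{-2}\sigma_n^*(L_{g_n}U_\infty)$, and the change of variables $y=\rho_n(x')$ under which $g_n\to g_{\R^3}$ in $C^\infty_{loc}$, the leading contribution equals $\int_{\R^3}U_\infty L_{g_{\R^3}}U_\infty\,dx+o(1)=\int_{\R^3}|\nabla U_\infty|^2\,dx+o(1)<\infty$ (using that $U_\infty\in D^1(\R^3)$ solves $-\Delta U_\infty=|\Psi_\infty|^2U_\infty$), while the contributions of the cut-off $\beta$, supported in the fixed shell $B_2(x_0)\setminus B_1(x_0)$, are $o(1)$ after rescaling because they are bounded by a tail of $\int_{\R^3}|U_\infty|^6$. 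For $\phi_n=R_n^{-1}\beta\sigma_n^*(\Psi_\infty)$ I would instead use the a priori estimate $\|\phi\|_{H^{\frac12}}\le C\big(\|D_g\phi\|_{H^{-\frac12}}+\|\phi\|_{L^2}\big)$ for the operator $D_g$ (which has trivial kernel); from the conformal covariance $D_g(\sigma_n^*\Psi_\infty)=R_n^{-1}\sigma_n^*(D_{g_n}\Psi_\infty)$ and the limit equation $D_{g_{\R^3}}\Psi_\infty=|U_\infty|^2\Psi_\infty$ one gets $D_g\phi_n=R_n^{-2}\beta\sigma_n^*(|U_\infty|^2\Psi_\infty)+R_n^{-1}\nabla\beta\cdot\sigma_n^*\Psi_\infty+R_n^{-2}\beta\sigma_n^*\big((D_{g_n}-D_{g_{\R^3}})\Psi_\infty\big)$, whose $L^{\frac32}$-norm, after rescaling, is bounded by $\|U_\infty\|_{L^6}^2\|\Psi_\infty\|_{L^3}+o(1)<\infty$; since $L^{\frac32}(\Sigma M)\hookrightarrow H^{-\frac12}(\Sigma M)$ and $\|\phi_n\|_{L^2}$ is bounded by the same scaling, the claim follows.

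The second step is that for every fixed $\delta>0$ the $L^2$-mass of both profiles escapes the set $M\setminus B_\delta(x_0)$. Since $\operatorname{supp}(v_n)\subset B_2(x_0)$, the change of variables $y=\rho_n(x')$ maps $(M\setminus B_\delta(x_0))\cap B_2(x_0)$ into a Euclidean shell $A_n\subset\{|x'|\ge cR_n^{-1}\}$ of volume $O(R_n^{-3})$, so that $\int_{M\setminus B_\delta(x_0)}|v_n|^2\,dv_g\le CR_n^{2}\int_{A_n}|U_\infty|^2\,dx'\le CR_n^{2}\Big(\int_{\{|x'|\ge cR_n^{-1}\}}|U_\infty|^6\Big)^{1/3}|A_n|^{2/3}\to 0$ because $U_\infty\in L^6(\R^3)$; likewise $\int_{M\setminus B_\delta(x_0)}|\phi_n|^2\,dv_g\le CR_n\int_{A_n}|\Psi_\infty|^2\,dx'\le CR_n\Big(\int_{\{|x'|\ge cR_n^{-1}\}}|\Psi_\infty|^3\Big)^{2/3}|A_n|^{1/3}\to 0$ because $\Psi_\infty\in L^3(\Sigma\R^3)$. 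Finally, by the first step and up to a subsequence $v_n\rightharpoonup\overline v$ in $H^1(M)$ and $\phi_n\rightharpoonup\overline\phi$ in $H^{\frac12}(\Sigma M)$; the compact embeddings $H^1(M)\hookrightarrow L^2(M)$ and $H^{\frac12}(\Sigma M)\hookrightarrow L^2(\Sigma M)$ upgrade these to strong $L^2$-convergence, and comparison with the second step forces $\overline v=0$ and $\overline\phi=0$ a.e. on $M\setminus B_\delta(x_0)$ for every $\delta>0$, hence $\overline v\equiv0$ and $\overline\phi\equiv0$. Thus $v_n\rightharpoonup 0$, $\phi_n\rightharpoonup 0$, and adding $u_n\rightharpoonup 0$, $\psi_n\rightharpoonup 0$ concludes the proof.

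The genuinely delicate point is the $H^{\frac12}$-bound in the first step: the $H^{\frac12}$-norm is nonlocal, whereas the conformal rescaling $\psi\mapsto R_n\rho_n^*\psi$ preserves the $L^p$-norms and the Dirac pairing $\int\langle D_g\psi,\psi\rangle$ but not the $H^{\frac12}$-norm, so one is forced through the elliptic estimate for $D_g$ and must control, by dominated convergence after rescaling, the errors generated by the cut-off $\beta$ and by $g_n\to g_{\R^3}$ on the dilating balls $\{|x'|\lesssim R_n^{-1}\}$ — everything eventually reducing to tails of the finite integrals $\int_{\R^3}|U_\infty|^6$ and $\int_{\R^3}|\Psi_\infty|^3$.
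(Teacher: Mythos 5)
Your argument is correct, and its skeleton --- uniform $H^{1}\times H^{\frac{1}{2}}$ bounds on the bubbles $v_n,\phi_n$, plus identification of their weak limit through the decay at infinity of $U_\infty\in L^{6}(\R^{3})$ and $\Psi_\infty\in L^{3}(\Sigma\R^{3})$ --- is the same as the paper's. The difference is in the identification step: the paper tests $v_n$ and $\phi_n$ directly against fixed smooth $f$ and $h$, splits the integral at the scale $RR_n$, bounds the inner part by $R_n^{5/2}\int_{B_R^0}|U_\infty|\,dv_{g_{\R^3}}$ and the outer part by a tail of $\|U_\infty\|_{L^6}$, and lets $n\to\infty$ then $R\to\infty$; you instead show that the $L^{2}$ mass of the bubbles escapes every set $M\setminus B_\delta(x_0)$ and invoke Rellich--Kondrachov to upgrade the weak limit to a strong $L^{2}$ limit that must vanish off $\{x_0\}$, hence everywhere. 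Both are valid; your version never needs to estimate the inner region at all (the compact embedding absorbs it), while the paper's works purely at the level of distributions. You also supply a proof of the boundedness of $v_n$ in $H^1$ and $\phi_n$ in $H^{\frac12}$, which the paper merely asserts at this point (the quantitative version of the scalar bound appears only later, in the energy-splitting lemma); your route through positivity of $L_g$ and the a priori estimate $\|\phi\|_{H^{1/2}}\le C\bigl(\|D_g\phi\|_{H^{-1/2}}+\|\phi\|_{L^2}\bigr)$ is the right one, and the error terms you isolate (the cut-off $\beta$ and $D_{g_n}-D_{g_{\R^3}}$ on the dilating balls) are exactly those the paper controls in the subsequent lemma on $dE(v_n,\phi_n)$. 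One small imprecision in your closing remark: the rescaling $\psi\mapsto R_n\rho_n^{*}\psi$ preserves only the $L^{3}$ norm of spinors (the conformally invariant exponent), not all $L^{p}$ norms; your displayed computations, however, carry the correct powers of $R_n$, so this does not affect the proof.
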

\begin{proof}
We already have that $u_{n}\rightharpoonup 0$ and $\psi_{n}\rightharpoonup 0$, thus to prove the lemma we only need to show the weak convergence for $v_{n}$ and $\phi_{n}$: these sequences are bounded in $H^{1}(M)$ and $H^{1/2}(\Sigma M)$ respectively, then up to subsequences, they converge to some limit. So if we show that the distributional limit is zero, then the limit in the desired space is also zero. So let $f\in C^{\infty}(M)$ and $h\in C^{\infty}(\Sigma M)$. We want to show that
$$\int_{M}v_{n}fdv_{g} \to 0$$
and
$$\int_{M}\langle \phi_{n},h\rangle dv_{g}\to 0.$$
We fix $R>0$, then we have
\begin{align}
\int_{B_{R_{n}R}(x_{n})}v_{n}fdv_{g}&=R_{n}^{-\frac{1}{2}}\int_{B_{R_{n}R}(x_{n})}\beta \sigma_{n}^{*}(U_\infty) fdv_{g}\notag\\
&=R_{n}^{\frac{5}{2}}\int_{B_{R}^{0}}\rho_{n}^{*}(\beta) \rho_{n}^{*}(f)U_\infty dv_{g_{n}} .\notag
\end{align}
Hence
$$\left|\int_{B_{R_{n}R}(x_{n})}v_{n}f dv_{g}\right|\leq CR_{n}^{\frac{5}{2}} \|f\|_{\infty}\int_{B_{R}^{0}}|U_\infty|dv_{g_{\R^{3}}}.$$
Also, for $n$ big enough we have that
\begin{align}
\int_{M\setminus B_{R_{n}R}(x_{n})}v_{n}fdv_{g}&=\int_{B_{3}(x_{n})\setminus B_{R_{n}R}(x_{n})}v_{n}fdv_{g}\notag\\
&=R_{n}^{\frac{5}{2}}\int_{B_{3R_{n}^{-1}}^{0}\setminus B_{R}^{0}}\rho_{n}^{*}(\beta) \rho_{n}^{*}(f)U_\infty dv_{g_{n}} .\notag
\end{align}
Hence, we have
\begin{align}
\left|\int_{M\setminus B_{R_{n}R}(x_{n})}v_{n}fdv_{g}\right|&\leq CR_{n}^{\frac{5}{2}}\|f\|_{\infty}\int_{B_{3R_{n}^{-1}}^{0}\setminus B_{R}^{0}}|U_\infty|dv_{g_{\R^{3}}} \notag \\
&\leq C\|f\|_{\infty}\left(\int_{B_{3R_{n}^{-1}}^{0}\setminus B_{R}^{0}}|U_\infty|^{6}dv_{g_{\R^{3}}}\right)^{\frac{1}{6}}.\notag
\end{align}
Based on these last two inequalities we have that
$$\left|\int_{M}v_{n}fdv_{g}\right|\leq C\|f\|_{\infty}\left(R_{n}^{\frac{5}{2}}\int_{B_{R}^{0}}|U_\infty|dv_{g_{\R^{3}}}+\left(\int_{\R^{3}\setminus B_{R}^{0}}|U_\infty|^{6}dv_{g_{\R^{3}}}\right)^{\frac{1}{6}}\right).$$
Letting $n\to \infty$ and then $R\to \infty$ we get the desired result. A similar inequality holds for $\phi_{n}$ and $h$.
\end{proof}

\noindent
Now we estimate the differential, that is
\begin{lemma}
We have
$$dE(v_{n},\phi_{n})\to 0 \qquad \text{ and }\qquad dE(\overline{u}_{n},\overline{\psi}_{n})\to 0 ,$$
in $H^{-1}(M)\times H^{-\frac{1}{2}}(\Sigma M)$.
\end{lemma}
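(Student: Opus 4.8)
\emph{Strategy and reduction.} The plan is to pull both differentials back to $\R^{3}$ through $\rho_{n}$ and $\sigma_{n}=\rho_{n}^{-1}$, where, by the conformal transformation laws of $L_{g}$ and $D_{g}$ applied to $g$ versus $\rho_{n}^{*}g=R_{n}^{2}g_{n}$, the fields $(v_{n},\phi_{n})$ become cut-off rescalings of $(U_{\infty},\Psi_{\infty})$ and $(\overline u_{n},\overline\psi_{n})$ cut-off rescalings of $(U_{n}-U_{\infty},\Psi_{n}-\Psi_{\infty})$, and the residual of $(\ref{el})$ becomes a combination of quantities already known to vanish. Since the bubble sits in the \emph{shrinking} ball $B_{R_{n}R}(x_{n})$ of $M$ but corresponds to the \emph{growing} ball $B^{0}_{R_{n}^{-1}}$ of $\R^{3}$, on which $g_{n}\to g_{\R^{3}}$ only locally, I fix a large radius $R$ and a cut-off $\chi_{R}\in C^{\infty}_{0}(\R^{3})$, $\chi_{R}\equiv 1$ on $B^{0}_{R}$, and split $v_{n}=v_{n}^{\sharp}+v_{n}^{\flat}$, $\phi_{n}=\phi_{n}^{\sharp}+\phi_{n}^{\flat}$ with $v_{n}^{\sharp}=R_{n}^{-1/2}\sigma_{n}^{*}(\chi_{R}U_{\infty})$, $\phi_{n}^{\sharp}=R_{n}^{-1}\sigma_{n}^{*}(\chi_{R}\Psi_{\infty})$, supported in $B_{2RR_{n}}(x_{n})\subset B_{1}(x)$. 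The change-of-variables estimates from the proof of the lemma on $F_{n},H_{n}$ give $\|v_{n}^{\flat}\|_{H^{1}(M)}+\|\phi_{n}^{\flat}\|_{H^{\frac12}(\Sigma M)}\le\omega(R)+o_{n}(1)$ with $\omega(R)\to 0$ as $R\to\infty$, because $U_{\infty}\in D^{1}(\R^{3})$, $\Psi_{\infty}\in D^{\frac12}(\Sigma\R^{3})$. As $(v_{n},\phi_{n})$, $(\overline u_{n},\overline\psi_{n})$ are bounded in $\mathcal{H}$ and $dE$ is Lipschitz on bounded sets of $\mathcal{H}$ (which uses only $H^{1}\hookrightarrow L^{6}$, $H^{\frac12}\hookrightarrow L^{3}$), replacing $v_{n},\phi_{n}$ by $v_{n}^{\sharp},\phi_{n}^{\sharp}$ (and $\overline u_{n},\overline\psi_{n}$ by $u_{n}-v_{n}^{\sharp},\psi_{n}-\phi_{n}^{\sharp}$) alters $dE$ by $\omega(R)+o_{n}(1)$; so it suffices to bound the residual of the $\sharp$-fields for fixed $R$ and then let $R\to\infty$. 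I repeatedly use the dual bounds $\|R_{n}^{-5/2}\sigma_{n}^{*}(W)\|_{H^{-1}(M)}\le C\|W\|_{L^{6/5}(B_{2R}^{0})}$ and $\|R_{n}^{-2}\sigma_{n}^{*}(V)\|_{H^{-\frac12}(\Sigma M)}\le C\|V\|_{L^{3/2}(B_{2R}^{0})}$ for $W,V$ supported in $B_{2R}^{0}$, obtained as in that same proof.

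\emph{The first convergence.} The residual of $(v_{n}^{\sharp},\phi_{n}^{\sharp})$ vanishes off $B_{2RR_{n}}(x_{n})$, while on it the transformation laws, $(\ref{eqR3})$ and $L_{g_{\R^{3}}}=-\Delta_{g_{\R^{3}}}$ give a scalar component $R_{n}^{-5/2}\sigma_{n}^{*}\big((L_{g_{n}}-L_{g_{\R^{3}}})(\chi_{R}U_{\infty})+\chi_{R}(1-\chi_{R}^{2})|\Psi_{\infty}|^{2}U_{\infty}-2\nabla\chi_{R}\cdot\nabla U_{\infty}-(\Delta_{g_{\R^{3}}}\chi_{R})U_{\infty}\big)$, and likewise for the spinor. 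Since $\chi_{R}U_{\infty}\in C^{2,\alpha}$ and $\chi_{R}\Psi_{\infty}\in C^{1,\beta}$ are fixed and $g_{n}\to g_{\R^{3}}$ in $C^{\infty}(B_{2R}^{0})$, the first bracketed term tends to $0$ in $L^{6/5}(B_{2R}^{0})$; the remaining terms are supported in the fixed annulus $B_{2R}^{0}\setminus B_{R}^{0}$ and are $\le\omega(R)$ in $L^{6/5}$ (tails of $U_{\infty}\in D^{1}$ and of $|\Psi_{\infty}|^{2}U_{\infty}\in L^{6/5}$). The dual bound then gives $\limsup_{n}\|dE(v_{n},\phi_{n})\|\le C\,\omega(R)$ for every $R$, hence $dE(v_{n},\phi_{n})\to 0$.

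\emph{The second convergence.} With $\overline u_{n}^{\sharp}=u_{n}-v_{n}^{\sharp}$, $\overline\psi_{n}^{\sharp}=\psi_{n}-\phi_{n}^{\sharp}$, the residual of $(\overline u_{n}^{\sharp},\overline\psi_{n}^{\sharp})$ equals $(\varepsilon_{n},\delta_{n})$ off $B_{2RR_{n}}(x_{n})$, which tends to $0$; on it, where $\overline u_{n}^{\sharp}=R_{n}^{-1/2}\sigma_{n}^{*}(U_{n}-\chi_{R}U_{\infty})$ and $\overline\psi_{n}^{\sharp}=R_{n}^{-1}\sigma_{n}^{*}(\Psi_{n}-\chi_{R}\Psi_{\infty})$, the same laws give a scalar residual $R_{n}^{-5/2}\sigma_{n}^{*}\big(\,[L_{g_{n}}U_{n}-|\Psi_{n}|^{2}U_{n}]+(L_{g_{\R^{3}}}-L_{g_{n}})(\chi_{R}U_{\infty})+(\text{terms in }\nabla\chi_{R})+N_{n}\big)$, where $[L_{g_{n}}U_{n}-|\Psi_{n}|^{2}U_{n}]=F_{n}\to 0$, $(L_{g_{\R^{3}}}-L_{g_{n}})(\chi_{R}U_{\infty})\to 0$ in $L^{6/5}(B_{2R}^{0})$, the $\nabla\chi_{R}$-terms are $\le\omega(R)$, and $N_{n}=|\Psi_{n}|^{2}U_{n}-\chi_{R}^{3}|\Psi_{\infty}|^{2}U_{\infty}-|\Psi_{n}-\chi_{R}\Psi_{\infty}|^{2}(U_{n}-\chi_{R}U_{\infty})$ is the interaction term. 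Expanding $U_{n}=(U_{n}-\chi_{R}U_{\infty})+\chi_{R}U_{\infty}$ and $\Psi_{n}=(\Psi_{n}-\chi_{R}\Psi_{\infty})+\chi_{R}\Psi_{\infty}$, $N_{n}$ becomes a sum of products of a factor tending to $0$ in $L^{q}(B_{2R}^{0})$ ($q<3$) or $L^{p}(B_{2R}^{0})$ ($p<6$) — here I use the \emph{strong} local convergence $U_{n}\to U_{\infty}$ in $H^{1}_{loc}$, $\Psi_{n}\to\Psi_{\infty}$ in $H^{\frac12}_{loc}$ of the lemma constructing $(U_{\infty},\Psi_{\infty})$ — with bounded factors in which $\chi_{R}U_{\infty},\chi_{R}\Psi_{\infty}$ enter in $L^{\infty}$ (by the regularity, Theorem 3.1) and $U_{n},\Psi_{n}$ in $L^{6}_{loc},L^{3}_{loc}$; hence $N_{n}\to 0$ in $L^{6/5}(B_{2R}^{0})$. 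The spinorial residual is treated identically, with $H_{n}$, $(D_{g_{\R^{3}}}-D_{g_{n}})(\chi_{R}\Psi_{\infty})$ and the analogous interaction term $\to 0$ in $L^{3/2}(B_{2R}^{0})$. The dual bounds give $\limsup_{n}\|dE(\overline u_{n},\overline\psi_{n})\|\le C\,\omega(R)$ for every $R$, so $dE(\overline u_{n},\overline\psi_{n})\to 0$.

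\emph{Main obstacles.} Two points are genuinely delicate. First, the core of $M$ shrinks while the corresponding region of $\R^{3}$ grows, so the smallness of $L_{g_{n}}-L_{g_{\R^{3}}}$ and $D_{g_{n}}-D_{g_{\R^{3}}}$ — valid only on compact subsets of $\R^{3}$ — must be paired with the $D^{1}$/$D^{\frac12}$-decay of $U_{\infty}$/$\Psi_{\infty}$ through the $\chi_{R}$-truncation and the uniform Lipschitz bound for $dE$. Second, after the singular rescalings $R_{n}^{-5/2}\sigma_{n}^{*}$ and $R_{n}^{-2}\sigma_{n}^{*}$ the interaction terms $N_{n}$ must go to $0$ in $H^{-1}(M)$, resp. $H^{-\frac12}(\Sigma M)$, and the natural H\"older exponents for the $|\Psi_{n}|^{2}U_{n}$-type products land \emph{exactly} on the critical values $\tfrac65$ and $\tfrac32$; mere boundedness of $U_{n},\Psi_{n}$ in the critical Lebesgue spaces is then insufficient, and one must combine their strong local convergence with the local boundedness of the limits provided by the regularity result to cross the borderline.
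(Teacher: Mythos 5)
Your proof is correct in substance and rests on the same pillars as the paper's: conformal covariance of $L_g$ and $D_g$ under $\rho_n$, the decay of $(U_\infty,\Psi_\infty)$ encoded in $D^{1}(\R^{3})\times D^{\frac12}(\Sigma\R^{3})$, the $C^\infty$-convergence $g_n\to g_{\R^3}$ on compact sets, and the strong local convergence of $(U_n,\Psi_n)$ for the interaction term. The organization differs. The paper never truncates the bubble: it tests $f_n=L_gv_n-|\phi_n|^2v_n$ directly against $f\in H^1(M)$, splits into the four explicit terms $I_1,\dots,I_4$ (cut-off derivatives, metric difference, $\beta-\beta^3$), and bounds each by pulling back and splitting the integration into $B_R^0$ and the annulus $B^0_{3R_n^{-1}}\setminus B_R^0$; for the second limit it uses the identity $d_uE(\overline u_n,\overline\psi_n)=d_uE(u_n,\psi_n)-d_uE(v_n,\phi_n)+A_n$, so that only the interaction $A_n$ remains, again split into interior (strong local convergence) and exterior (tails). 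Your $\chi_R$-truncation plus local Lipschitz continuity of $dE$ front-loads the same exterior estimates into the single bound $\|v_n^\flat\|_{H^1}+\|\phi_n^\flat\|_{H^{1/2}}\le\omega(R)+o_n(1)$, which confines the remaining computation to the fixed compact set $B_{2R}^0$ and is arguably cleaner. The one step you should not wave through is precisely that bound for the spinor tail: the $H^{\frac12}$-norm is non-local, so $\|\phi_n^\flat\|_{H^{1/2}}$ is not read off from a change of variables in an integral, and the lemma on $F_n,H_n$ contains no such estimate. You need something like $\|\phi_n^\flat\|_{H^{1/2}}\le C\|D_g\phi_n^\flat\|_{L^{3/2}}$ (valid since $\ker D_g=\{0\}$ under the positive Yamabe assumption), the scale invariance of the $L^{3/2}$-norm under $R_n^{-2}\sigma_n^{*}$, and control of $(\rho_n^{*}\beta-\chi_R)D_{g_n}\Psi_\infty$ and $\nabla(\rho_n^{*}\beta-\chi_R)\cdot\Psi_\infty$ on the large annulus, using $D_{g_{\R^3}}\Psi_\infty=|U_\infty|^2\Psi_\infty\in L^{3/2}(\R^{3})$ and the boundedness (not smallness) of $g_n-g_{\R^3}$ there; this is exactly the estimate the paper's route avoids by keeping the full bubble. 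With that filled in, and with the understanding that your claim ``$N_n\to0$ in $L^{6/5}(B_{2R}^0)$'' really means $\limsup_n\|N_n\|_{L^{6/5}(B_{2R}^0)}\le\omega(R)$ (on the annulus $\Psi_n-\chi_R\Psi_\infty$ converges to $(1-\chi_R)\Psi_\infty$, not to $0$), the argument closes.
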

\begin{proof}
We set
$$f_{n}=L_{g}v_{n}-|\phi_{n}|^{2}v_{n}$$
and
$$h_{n}=D_{g}\phi_{n}-|v_{n}|^{2}\phi_{n} .$$
Let $f\in H^{1}(M)$ and $h\in H^{\frac{1}{2}}(\Sigma M)$, we compute
\begin{align}
\int_{M}f_{n}f dv_{g}&=R_{n}^{-\frac{1}{2}}\left(\int_{M}(-\Delta_{g}\beta)\sigma_{n}^{*}(\U) fdv_{g}+2\int_{M}g(-\nabla \beta,\nabla \sigma_{n}^{*}(\U)) fdv_{g} \right)\notag\\
&\quad +R_{n}^{-\frac{1}{2}}\int_{M}\beta L_{g}(\sigma_{n}^{*}(U_\infty))fdv_{g}-R_{n}^{-\frac{5}{2}}\int_{M}\beta^{3} |\sigma_{n}^{*}(\Psi_\infty)|^{2}\sigma_{n}^{*}(U_\infty) fdv_{g}\notag\\
&=R_{n}^{-\frac{1}{2}}\int_{M}(\Delta_{g}\beta) \sigma_{n}^{*}(\U) fdv_{g}+2R_{n}^{-\frac{1}{2}}\int_{M}g(\nabla \beta,\nabla f) \sigma_{n}^{*}(\U)dv_{g} \notag\\
&\quad + R_{n}^{-\frac{5}{2}}\int_{M}\beta \sigma_{n}^{*}(L_{g_{n}}\U)fdv_{g}-R_{n}^{-\frac{5}{2}}\int_{M}\beta^{3} \sigma_{n}^{*}(|\Ps|^{2}\U)fdv_{g}\notag\\
&=R_{n}^{-\frac{1}{2}}\int_{M}(\Delta_{g}\beta) \sigma_{n}^{*}(\U) fdv_{g}+2R_{n}^{-\frac{1}{2}}\int_{M}g(\nabla \beta,\nabla f) \sigma_{n}^{*}(\U)dv_{g} \notag\\
&\quad + R_{n}^{-\frac{5}{2}}\int_{M}\beta \sigma_{n}^{*}\left((L_{g_{n}}-L_{g_{\R^{3}}})V\right)fdv_{g}+R_{n}^{-\frac{5}{2}}\int_{M}(\beta-\beta^{3}) \sigma_{n}^{*}(|\Ps|^{2}\U) f dv_{g}\notag\\
&=I_{1}+I_{2}+I_{3}+I_{4} .\notag
\end{align}
We estimate first $I_{1}$, so we fix $\gamma \in C^{\infty}(M)$ such that $\gamma=1$ on $supp(\beta)$. Then we have
\begin{align}
I_{1}&=R_{n}^{-\frac{1}{2}}\int_{M}(\Delta_{g}\beta)\sigma_{n}^{*}(\U) \gamma fdv_{g}\notag \\
&=R_{n}^{-\frac{1}{2}}\int_{\R^{3}}\rho^{*}_{n}(\Delta_{g}\beta)\U \rho^{*}_{n}(\gamma f)dv_{\rho^{*}_{n}g}\notag\\
&=R_{n}^{\frac{5}{2}}\int_{\R^{3}}\rho^{*}_{n}(\Delta_{g}\beta)\U \rho^{*}_{n}(\gamma f)dv_{g_{n}}.\notag
\end{align}
Thus
\begin{align}
|I_{1}|&\leq R_{n}^{2}\|\rho^{*}_{n}(\Delta_{g}\beta)\U\|_{L^{\frac{6}{5}}(\R^{3},g_{n})}\|R_{n}^{\frac{1}{2}}\rho^{*}_{n}(\gamma f)\|_{L^{6}(\R^{3},g_{n})}\notag \\
&\leq R_{n}^{2}\|\rho^{*}_{n}(\Delta_{g}\beta)\U\|_{L^{\frac{6}{5}}(\R^{3},g_{n})}\|\gamma f\|_{L^{6}(M)}\notag\\
&\leq R_{n}^{2}\|\rho^{*}_{n}(\Delta_{g}\beta)\U\|_{L^{\frac{6}{5}}(\R^{3},g_{n})}\|f\|_{H^{1}(M)}. \notag
\end{align}
But,
\begin{align}
\int_{\R^{3}}|\rho^{*}_{n}(\Delta_{g}\beta)\U|^{\frac{6}{5}}dv_{g_{n}}&\leq C\int_{R_{n}^{-1}exp^{-1}_{x_{n}}(B_{2}(x))\setminus R_{n}^{-1}exp^{-1}_{x_{n}}(B_{1}(x))}|\U|^{\frac{6}{5}}dv_{g_{\R^{3}}}\notag \\
&\leq C R_{n}^{-\frac{12}{5}} \left(\int_{B_{3R_{n}^{-1}}^{0}\setminus B_{\frac{1}{2}R_{n}^{-1}}^{0}}|\U|^{6}dv_{g_{\R^{3}}}\right)^{\frac{1}{5}}.
\end{align}
Therefore,
$$|I_{1}|\leq C\left(\int_{B_{3R_{n}^{-1}}^{0}\setminus B_{\frac{1}{2}R_{n}^{-1}}^{0}}|\U|^{6}dv_{\R^{3}}\right)^{\frac{1}{6}}\|f\|_{H^{1}(M)},$$
and since $\U\in D^{1}(\R^{3})$, we have that
$$|I_{1}|\leq o(1)\|f\|_{H^{1}}.$$
The estimate for $I_{2}$ is very similar to the one of $I_{1}$, indeed we have
\begin{align}
|I_{2}|&\leq R_{n}^{\frac{5}{2}}\int_{\R^{3}}|\rho_{n}^{*}(\nabla \beta)||\U| |\rho_{n}^{*}(\nabla f)|dv_{g_{n}}\notag\\
&\leq R_{n}^{2}\|\rho_{n}^{*}(\nabla \beta)|\U|\|_{L^{2}(\R^{3},g_{n})} \|R_{n}^{\frac{1}{2}}\rho_{n}^{*}(\nabla f)\|_{L^{2}(\R^{3},g_{n})}\notag\\
&\leq R_{n}^{2}\|\rho_{n}^{*}(\nabla \beta)|\U|\|_{L^{2}(\R^{3},g_{n})}\|f\|_{H^{1}(M)} . \notag
\end{align}
But
\begin{align}
\int_{\R^{3}}|\rho^{*}_{n}(\nabla\beta)|^{2}|\U|^{2}dv_{g_{n}}&\leq C\int_{R_{n}^{-1}exp^{-1}_{x_{n}}(B_{2}(x))\setminus R_{n}^{-1}exp^{-1}_{x_{n}}(B_{1}(x))}|\U|^{2}dv_{\R^{3}}\notag \\
&\leq C R_{n}^{-2} \left(\int_{B_{3R_{n}^{-1}}^{0}\setminus B_{\frac{1}{2}R_{n}^{-1}}^{0}}|\U|^{6}dv_{\R^{3}}\right)^{\frac{1}{3}}, \label{e2}
\end{align}
which allows us to conclude that
$$|I_{2}|\leq o(1)\|f\|_{H^{1}}.$$
We move now to $I_{3}$. We first notice that
$$I_{3}=\int_{M} \left( (L_{g_{n}}-L_{\R^{3}})\U\right)R_{n}^{-\frac{1}{2}}\rho_{n}^{*}(\beta\gamma f)dv_{g_{n}}.$$
Therefore,
$$|I_{3}|\leq C\|\beta\rho_{n}^{*}((L_{g_{n}}-L_{\R^{3}})\U)\|_{L^{\frac{6}{5}}(\R^{3},g_{n})}\|f\|_{H^{1}(M)},$$
but
$$\|\beta\rho_{n}^{*}((L_{g_{n}}-L_{\R^{3}})\U)\|_{L^{\frac{6}{5}}}\leq $$
$$\leq\|\beta\rho_{n}^{*}((L_{g_{n}}-L_{\R^{3}})\U)\|_{L^{\frac{6}{5}}(B_{R}^{0})}+ \|\beta\rho_{n}^{*}((L_{g_{n}}-L_{\R^{3}})\U)\|_{L^{\frac{6}{5}}(\R^{3}\setminus B_{R}^{0})}.$$
Now, since $g_{n}\to g_{\R^{3}}$ in $C^{\infty}(B_{R}^{0})$, and since $-\Delta_{g_{\R^3}} \U=|\Ps|^{2}\U\in L^{\frac{6}{5}}(\R^{3})$, we have as in (\ref{e2}), by letting $n \to \infty$ and then $R \to \infty$ that
$$|I_{3}|\leq o(1)\|f\|_{H^{1}}.$$
It remains now to consider the term $I_{4}$:
$$I_{4}=\int_{\R^{3}}\rho_{n}^{*}(\beta-\beta^{3}) |\Ps|^{2}\U R_{n}^{\frac{1}{2}}\rho_{n}^{*}(\gamma f)dv_{g_{n}} .$$
We have
$$|I_{4}|\leq C\|\rho_{n}^{*}(\beta-\beta^{3}) |\Ps|^{2}\U\|_{L^{\frac{6}{5}}(\R^{3},g_{n})}\|f\|_{H^{1}} ,$$
and
$$\int_{\R^{3}}|\rho_{n}^{*}(\beta-\beta^{3})|^{\frac{6}{5}} |\Ps|^{\frac{12}{5}}|\U|^{\frac{6}{5}}dv_{g_{n}}\leq C\|\Ps\|_{L^{3}}^{\frac{6}{5}}\left(\int_{B_{3R_{n}^{-1}}^{0}\setminus B_{\frac{1}{2}R_{n}^{-1}}^{0}}|\U|^{2}|\Ps|^{2}dv_{g_{\R^{3}}}\right)^{\frac{3}{5}}.$$
Then using the fact that $U_{\infty}\in D^{1}(\R^{3})$ and $\Psi_{\infty}\in D^{\frac{1}{2}}(\Sigma\R^{3})$, we have that
$$|I_{4}|\leq o(1)\|f_{1}\|.$$
Therefore, we have that $f_{n}\to 0$ in $H^{-1}(M)$ and a similar convergence holds for $h_{n}\to 0$ in $H^{-\frac{1}{2}}(\Sigma M)$.\\
Next we move to $(\overline{u}_{n},\overline{\psi}_{n})$ and again we fix $f\in H^{1}(M)$. First we notice that
\begin{align}
d_{u}E(\overline{u}_{n},\overline{\psi}_{n})f&=\int_{M}fL_{g}\overline{u}_{n}dv_{g}-\int_{M}|\overline{\psi}_{n}|^{2}\overline{u}_{n} fdv_{g}\notag\\
&=d_{u}E(u_{n},\psi_{n})f-d_{u}E(v_{n},\phi_{n})f+\int_{M}A_{n}fdv_{g} ,\notag
\end{align}
where
$$A_{n}=|\psi_{n}|^{2}v_{n}-|\phi_{n}|^{2}u_{n}+2\langle \psi_{n},\phi_{n}\rangle (u_{n}-v_{n}).$$
Now, since we already proved that $d_{u}E(u_{n},\psi_{n})\to 0$ and $d_{u}E(v_{n},\phi_{n})\to 0$, it is enough to show that $A_{n}\to 0$ in $H^{-1}$. First we have that
\begin{align}
\int_{M\setminus B_{RR_{n}}(x_{n})}|\psi_{n}|^{\frac{12}{5}}|v_{n}|^{\frac{6}{5}}dv_{g}&\leq \|\psi_{n}\|_{L^{3}}^{\frac{12}{5}}\left(\int_{M\setminus B_{RR_{n}}(x_{n})}|v_{n}|^{6}dv_{g}\right)^{\frac{1}{5}}\notag \\
&\leq C \|\psi_{n}\|_{H^{\frac{1}{2}}}^{\frac{12}{5}}\left(\int_{B_{3R_{n}^{-1}}^{0}\setminus B_{R}^{0}}|\U|^{6}dv_{g_{n}}\right)^{\frac{1}{5}}.\notag
\end{align}
But since $\|\psi_{n}\|_{H^{\frac{1}{2}}}$ is bounded and $dv_{g_{n}}\leq Cdv_{g_{\R^{3}}}$, we have that
$$\int_{B_{3R_{n}^{-1}}^{0}\setminus B_{R}^{0}}|\U|^{6}dv_{g_{n}}\to 0 ,$$
as $R\to \infty$ uniformly on $n$; hence
$$\||\psi_{n}|^{2}v_{n}\|_{L^{\frac{6}{5}}(M\setminus B_{RR_{n}}(x_{n}))}\to 0.$$
Similarly, we have that
\begin{align}
\int_{M\setminus B_{RR_{n}}(x_{n})}|\phi_{n}|^{\frac{12}{5}}|u_{n}|^{\frac{6}{5}}dv_{g}&\leq \|u_{n}\|_{L^{6}}^{\frac{6}{5}}\left(\int_{M\setminus B_{RR_{n}}(x_{n})}|\phi_{n}|^{3}dv_{g}\right)^{\frac{4}{5}}\notag \\
&\leq C \|u_{n}\|_{H^{1}}^{\frac{6}{5}}\left(\int_{B_{3R_{n}^{-1}}^{0}\setminus B_{R}^{0}}|\Ps|^{3}dv_{g_{n}}\right)^{\frac{4}{5}}.\notag
\end{align}
Hence the same conclusion holds when $R\to \infty$. To finish this estimate on the exterior domain, we consider the mixed terms:
\begin{align}
\int_{M\setminus B_{RR_{n}}(x_{n})}|\phi_{n}|^{\frac{6}{5}}|\psi_{n}|^{\frac{6}{5}}|u_{n}|^{\frac{6}{5}}dv_{g}&\leq \|u_{n}\|_{L^{6}}^{\frac{6}{5}}\|\psi_{n}\|_{L^{3}}^{\frac{6}{5}}\left(\int_{M\setminus B_{RR_{n}}(x_{n})}|\phi_{n}|^{3}dv_{g}\right)^{\frac{2}{5}}\notag \\
&\leq C \|\psi_{n}\|_{H^{\frac{1}{2}}}^{\frac{6}{5}} \|u_{n}\|_{H^{1}}^{\frac{6}{5}}\left(\int_{B_{3R_{n}^{-1}}^{0}\setminus B_{R}^{0}}|\Ps|^{3}dv_{g_{n}}\right)^{\frac{2}{5}} .\notag
\end{align}
Now
$$
\int_{M\setminus B_{RR_{n}}(x_{n})}|\phi_{n}|^{\frac{6}{5}}|\psi_{n}|^{\frac{6}{5}}|v_{n}|^{\frac{6}{5}}dv_{g}\leq$$ $$\leq\|\psi_{n}\|_{L^{3}}^{\frac{6}{5}}\left(\int_{M\setminus B_{RR_{n}}(x_{n})}|\phi_{n}|^{3}dv_{g}\right)^{\frac{2}{5}} \left(\int_{M\setminus B_{RR_{n}}(x_{n})}|v_{n}|^{6}dv_{g}\right)^{\frac{1}{5}} $$
$$\leq  C \|\psi_{n}\|_{H^{\frac{1}{2}}}^{\frac{6}{5}} \left(\int_{B_{3R_{n}^{-1}}^{0}\setminus B_{R}^{0}}|\Ps|^{3}dv_{g_{n}}\right)^{\frac{2}{5}}\left(\int_{B_{3R_{n}^{-1}}^{0}\setminus B_{R}^{0}}|\U|^{6}dv_{g_{n}}\right)^{\frac{1}{5}} .$$
We need now an estimate inside the ball $B_{RR_{n}}(x_{n})$, that is
$$\int_{B_{RR_{n}}(x_{n})}|A_{n}|^{\frac{6}{5}}dv_{g}=$$
$$
=\int_{B_{RR_{n}}(x_{n})}||\psi_{n}|^{2}(v_{n}-u_{n})+ (|\psi_{n}|^{2}-|\phi_{n}|^{2})u_{n}+2\langle \psi_{n},\phi_{n}\rangle (u_{n}-v_{n})|^{\frac{6}{5}}dv_{g} \leq $$
$$\int_{B_{R}^{0}}\left(|\Psi_{n}|^{2}|\rho_{n}^{*}(\beta) \U -U_{n}|+(|\Psi_{n}|^{2}-|\rho_{n}^{*}(\beta) \Ps|^{2})|U_{n}|  +2|\Psi_{n}||\rho^{*}_{n}(\beta)\Ps| |U_{n}-\rho^{*}_{n}(\beta) \U|\right)^{\frac{6}{5}}dv_{g_{n}}.$$
Hence for $n$ large, we have that
$$\int_{B_{RR_{n}}(x_{n})}|A_{n}|^{\frac{6}{5}}dv_{g}$$
$$\leq \int_{B_{R}^{0}}\left(|\Psi_{n}|^{2}| \U -U_{n}|+ (|\Psi_{n}|^{2}-|\Ps|^{2})|U_{n}|+2|\Psi_{n}||\Ps||U_{n}- \U|\right)^{\frac{6}{5}}dv_{g_{n}},$$
and since $\Psi_{n}\to \Ps$ in $H^{\frac{1}{2}}_{loc}(\Sigma\R^{3})$ and $U_{n}\to \U$ in $H^{1}_{loc}(\R^{3})$, we have that
$$\int_{B_{RR_{n}}(x_{n})}|A_{n}|^{\frac{6}{5}}dv_{g}\to 0,$$
which finishes the proof for $d_{u}E$. The same computations also hold for $d_{\psi}E$.
\end{proof}

\noindent
Now we estimate the energy, that is:
\begin{lemma}
We have
$$E(\overline{u}_{n},\overline{\psi}_{n})=E(u_{n},\psi_{n})-E_{\R^{3}}(\U,\Ps)+o(1).$$
\end{lemma}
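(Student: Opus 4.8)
The plan is to reduce the identity to a Brezis--Lieb type splitting for the single quartic term $\int_M|u|^2|\psi|^2\,dv_g$. I would start from the algebraic identity $2E(z)-\langle dE(z),z\rangle=\int_M|u|^2|\psi|^2\,dv_g$, valid for every $z=(u,\psi)\in\mathcal H$ (already used in the proof of Proposition 4.1). Since $(u_n,\psi_n)$ is a (PS) sequence, while $(v_n,\phi_n)$ and $(\overline{u}_n,\overline{\psi}_n)$ are bounded in $\mathcal H$ with $dE(v_n,\phi_n)\to0$ and $dE(\overline{u}_n,\overline{\psi}_n)\to0$ by Lemma 4.9, this identity yields
$$E(u_n,\psi_n)=\tfrac12\int_M|u_n|^2|\psi_n|^2\,dv_g+o(1),\qquad E(v_n,\phi_n)=\tfrac12\int_M|v_n|^2|\phi_n|^2\,dv_g+o(1),$$
and $E(\overline{u}_n,\overline{\psi}_n)=\tfrac12\int_M|\overline{u}_n|^2|\overline{\psi}_n|^2\,dv_g+o(1)$. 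Likewise, pairing $(\ref{eqR3})$ with $(\U,\Ps)$ and integrating by parts — legitimate since $\U\in D^1(\R^3)$ and $\Ps\in D^{\frac{1}{2}}(\Sigma\R^{3})$ — gives $\int_{\R^3}|\nabla\U|^2=\int_{\R^3}\langle D_{g_{\R^3}}\Ps,\Ps\rangle=\int_{\R^3}|\U|^2|\Ps|^2$, so that $E_{\R^3}(\U,\Ps)=\tfrac12\int_{\R^3}|\U|^2|\Ps|^2\,dv_{g_{\R^3}}$, the last integral being finite by H\"older since $\U\in L^6$ and $\Ps\in L^3$. Hence the statement is equivalent to $\int_M|u_n|^2|\psi_n|^2\,dv_g=\int_M|\overline{u}_n|^2|\overline{\psi}_n|^2\,dv_g+\int_{\R^3}|\U|^2|\Ps|^2\,dv_{g_{\R^3}}+o(1)$.

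Next I would compute the bubble mass. Using $v_n=R_n^{-\frac12}\beta\sigma_n^*(\U)$, $\phi_n=R_n^{-1}\beta\sigma_n^*(\Ps)$ and $\rho_n^*(dv_g)=R_n^3\,dv_{g_n}$, the substitution $x=\rho_n(y)$ turns $\int_M|v_n|^2|\phi_n|^2\,dv_g$ into $\int_{\R^3}\rho_n^*(\beta)^4\,|\U|^2|\Ps|^2\,dv_{g_n}$; since $\rho_n(y)\to x$ on compact sets, $\beta\equiv1$ near $x$, and $g_n\to g_{\R^3}$ in $C^\infty_{loc}$, dominated convergence (with dominant $|\U|^2|\Ps|^2\in L^1(\R^3)$) gives $\int_M|v_n|^2|\phi_n|^2\,dv_g\to\int_{\R^3}|\U|^2|\Ps|^2\,dv_{g_{\R^3}}$. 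Combined with the previous paragraph, the lemma reduces to the Brezis--Lieb identity
$$\int_M|u_n|^2|\psi_n|^2\,dv_g=\int_M|\overline{u}_n|^2|\overline{\psi}_n|^2\,dv_g+\int_M|v_n|^2|\phi_n|^2\,dv_g+o(1).$$

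To prove this I would expand $|u_n|^2|\psi_n|^2=|\overline{u}_n+v_n|^2\,|\overline{\psi}_n+\phi_n|^2$, which produces the two diagonal terms $|\overline{u}_n|^2|\overline{\psi}_n|^2$, $|v_n|^2|\phi_n|^2$ and seven mixed terms, each of which contains at least one of $v_n,\phi_n$ and at least one of $\overline{u}_n,\overline{\psi}_n$. For each mixed term I would split the integral over $M=B_{RR_n}(x_n)\cup\bigl(M\setminus B_{RR_n}(x_n)\bigr)$. Over the exterior, the bubble tails $\int_{M\setminus B_{RR_n}(x_n)}|v_n|^6\,dv_g$ and $\int_{M\setminus B_{RR_n}(x_n)}|\phi_n|^3\,dv_g$ are controlled, after change of variables, by $\int_{\R^3\setminus B_R^0}|\U|^6\,dv_{g_{\R^3}}$, resp. $\int_{\R^3\setminus B_R^0}|\Ps|^3\,dv_{g_{\R^3}}$, hence are $o(1)$ as $R\to\infty$ uniformly in $n$ (exactly as in the proof of Lemma 4.9), while $u_n,\psi_n,\overline{u}_n,\overline{\psi}_n$ stay bounded in $L^6$, resp. $L^3$; so by H\"older each mixed integral over the exterior is $o(1)$ as $R\to\infty$ uniformly in $n$. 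Over the interior, the rescaling $x=\rho_n(y)$ transforms the mixed term into an integral over $B_R^0$ against $g_n$, and there $R_n^{1/2}\rho_n^*(\overline{u}_n)=U_n-\rho_n^*(\beta)\U\to0$ in $L^6(B_R^0)$ and $R_n\rho_n^*(\overline{\psi}_n)=\Psi_n-\rho_n^*(\beta)\Ps\to0$ in $L^3(B_R^0)$ by Lemma 4.7, whereas $\rho_n^*(\beta)\U,\rho_n^*(\beta)\Ps$ stay bounded in $L^6$, resp. $L^3$; hence each mixed integral over the interior is $o(1)$ for fixed $R$. The usual argument — first fix $R$ large, then let $n\to\infty$ — makes all seven mixed terms $o(1)$, which proves the Brezis--Lieb identity and therefore the lemma. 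The same estimates give the analogous convergence $\int_M|v_n|^2|\phi_n|^2\,dv_g\to\int_{\R^3}|\U|^2|\Ps|^2$ once more if one prefers to avoid the dominated-convergence step above.

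The main obstacle is the bookkeeping in this last step: one must pair, for each of the seven mixed terms, the correct interior $L^6$/$L^3$-smallness of $\overline{u}_n$ or $\overline{\psi}_n$ against the correct exterior tail-smallness of $v_n$ or $\phi_n$, keeping all bounds uniform in $n$, and choosing H\"older exponents so that the total exponent is $1$. No genuinely new estimate is required, however, since each of these integrals is an instance of a computation already performed in the proof of Lemma 4.9.
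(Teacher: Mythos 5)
Your proposal is correct, and it takes a genuinely different route from the paper. The paper expands $E(\overline{u}_n,\overline{\psi}_n)$ directly as $E(u_n,\psi_n)+E(v_n,\phi_n)-dE(v_n,\phi_n)(u_n,\psi_n)+B_n$, estimates the five cross terms of the quartic remainder $B_n$ one by one (each converging to $\pm\int_{\R^{3}}|\U|^{2}|\Ps|^{2}$), and then separately proves $E(v_n,\phi_n)\to E_{\R^{3}}(\U,\Ps)$, which forces it to handle the quadratic forms $\int v_nL_gv_n$ and $\int\langle D_g\phi_n,\phi_n\rangle$ via the operator convergence $L_{g_n}-L_{g_{\R^{3}}}\to0$ and the cut-off commutator terms. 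You instead use the Euler identity $2E(z)-\langle dE(z),z\rangle=\int_M|u|^2|\psi|^2$ together with the differential lemma (valid for all three sequences, which are bounded) to collapse every energy into half its quartic term, so the whole statement reduces to a single Brezis--Lieb splitting of $\int_M|u_n|^2|\psi_n|^2$ plus the bubble-mass computation $\int_M|v_n|^2|\phi_n|^2\to\int_{\R^{3}}|\U|^{2}|\Ps|^{2}$; the identity $E_{\R^{3}}(\U,\Ps)=\tfrac12\int_{\R^{3}}|\U|^{2}|\Ps|^{2}$ that you need is the same one the paper uses in its lower-bound proposition. What your route buys is a cleaner bookkeeping and the complete avoidance of the quadratic-form convergences; what it costs is a heavier reliance on the differential lemma (you need $dE\to0$ for $(v_n,\phi_n)$ and $(\overline{u}_n,\overline{\psi}_n)$, not just the pairing $dE(v_n,\phi_n)(u_n,\psi_n)\to0$), but that lemma is proved immediately before this one in the paper, so there is no circularity. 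The interior/exterior splitting and the $L^6_{loc}$/$L^3_{loc}$ convergences from the local convergence lemma that you invoke for the seven mixed terms are exactly the estimates already carried out in the paper, so no new analysis is required.
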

\begin{proof}
We have
$$E(\overline{u}_{n},\overline{\psi}_{n})=$$
$$=\frac{1}{2}\left(\int_{M}(u_{n}-v_{n})L_{g}(u_{n}-v_{n})+\langle D_{g}(\psi_{n}-\phi_{n}),\psi_{n}-\phi_{n}\rangle -|\psi_{n}-\phi_{n}|^{2}|u_{n}-v_{n}|^{2}dv_{g}\right)=$$
$$=E(u_{n},\psi_{n})+E(v_{n},\phi_{n})-dE(v_{n},\phi_{n})(u_{n},\psi_{n})+B_{n},$$
where
$$B_{n}=\frac{1}{2}\left(\int_{M}|u_{n}|^{2}|\phi_{n}|^{2}+|v_{n}|^{2}|\psi_{n}|^{2}-2|u_{n}|^{2}\langle \psi_{n},\phi_{n}\rangle -2|\psi_{n}|^{2}u_{n}v_{n}+4u_{n}v_{n}\langle \psi_{n},\phi_{n}\rangle dv_{g}\right).$$
We first notice that since $(u_{n},\psi_{n})$ is bounded in $H^{1}(M)\times H^{\frac{1}{2}}(\Sigma M)$, from Lemma 4.2, we have that $$dE(v_{n},\phi_{n})(u_{n},\psi_{n})\to 0 .$$
We are going to estimate the five terms of $B_{n}$, one by one.
\begin{align}
\int_{M}|u_{n}|^{2}|\phi_{n}|^{2}&=\int_{M}|u_{n}|^{2}R_{n}^{-2}|\beta|^{2}|\sigma_{n}^{*}\Ps|^{2}dv_{g}\notag\\
&=\int_{\R^{3}}|\rho_{n}^{*}(\beta)|^{2}|U_{n}|^{2}|\Ps|^{2}dv_{g_{n}}\notag\\
&=\int_{\R^{3}}|\U|^{2}|\Ps|^{2}dv_{g_{n}}-\int_{\R^{3}}(|\U|^{2}-|\rho_{n}^{*}(\beta) U_{n}|^{2})|\Ps|^{2}dv_{g_{n}}.\notag
\end{align}
For $n$ large, we have
$$\int_{\R^{3}}(|\U|^{2}-|\rho_{n}^{*}(\beta)U_{n}|^{2})|\Ps|^{2}dv_{g_{n}}=$$
$$= \int_{B_{R}^{0}}(|\U|^{2}-|U_{n}|^{2})|\Ps|^{2}dv_{g_{n}}+\int_{\R^{3}\setminus B_{R}^{0}} (|\U|^{2}-|\rho_{n}^{*}(\beta) U_{n}|^{2})|\Ps|^{2}dv_{g_{n}}.$$
Now since $U_{n}\to \U$ in $H^{1}_{loc}(\R^3)$, we have that
$$\int_{B_{R}^{0}}(|\U|^{2}-|U_{n}|^{2})|\Ps|^{2}dv_{g_{n}} \to 0,$$
as $n\to \infty$. Now
$$\left|\int_{\R^{3}\setminus B_{R}^{0}} (|\U|^{2}-|\rho_{n}^{*}(\beta)U_{n}|^{2})|\Ps|^{2}dv_{g_{n}}\right|\leq$$
$$\leq C \|\Ps\|_{L^{3}(\R^{3}\setminus B_{R}^{0})}^{2}\left(\|\U\|_{L^{6}(\R^{3})}^{2}+\|U_{n}\|_{L^{6}(B_{3R^{-1}_{n}}^{0}\setminus B_{R}^{0})}\right).$$
From (\ref{e3}),  $\|U_{n}\|_{L^{6}(B_{3R^{-1}_{n}}^{0})}$ is bounded, hence we can take $R\to \infty$ uniformly in $n$ to get that
$$\int_{M}|u_{n}|^{2}|\phi_{n}|^{2}dv_{g}=\int_{\R^{3}}|\U|^{2}|\Ps|^{2}dv_{g_{\R^{3}}}+o(1).$$
The next term that we want to estimate is
\begin{align}
\int_{M}|v_{n}|^{2}|\psi_{n}|^{2}dv_{g}&=\int_{\R^{3}}|\U|^{2}|\rho_{n}^{*}(\beta)|^{2}|\Psi_{n}|^{2}dv_{g_{n}}\notag\\
&=\int_{\R^{3}}|\U|^{2}|\Ps|^{2}dv_{g_{n}}+\int_{\R^{3}}|\U|^{2}|\left(\rho_{n}^{*}(\beta)|^{2}|\Psi_{n}|^{2}-|\Ps|^{2}\right)dv_{g_{n}} .\notag
\end{align}
Again by splitting the integral in $B_{R}^{0}$ and $\R^{3}\setminus B_{R}^{0}$, we get that
$$\int_{M}|v_{n}|^{2}|\psi_{n}|^{2}dv_{g}=\int_{\R^{3}}|\U|^{2}|\Ps|^{2}dv_{g_{\R^{3}}}+o(1).$$
Next,
\begin{align}
\int_{M}|u_{n}|^{2}\langle \psi_{n},\phi_{n}\rangle dv_{g}&=\int_{\R^{3}}\rho_{n}^{*}(\beta)|U_{n}|^{2}\langle \Psi_{n},\Ps\rangle dv_{g_{n}}\notag\\
&=\int_{\R^{3}}|\U|^{2}|\Ps|^{2}dv_{g_{n}}+\int_{\R^{3}}\langle \rho_{n}^{*}(\beta)|U_{n}|^{2}\Psi_{n}-|\U|^{2}\Ps,\Ps\rangle dv_{g_{n}} .\notag
\end{align}
Similarly, we have
$$\int_{\R^{3}}\langle \rho_{n}^{*}(\beta)|U_{n}|^{2}\Psi_{n}-|\U|^{2}\Ps,\Ps\rangle dv_{g_{n}}=$$
$$=\int_{B_{R}^{0}}\langle |U_{n}|^{2}\Psi_{n}-|\U|^{2}\Ps,\Ps\rangle dv_{g_{n}}+\int_{\R^{3}\setminus{B_{R}^{0}}}\langle \rho_{n}^{*}(\beta)|U_{n}|^{2}\Psi_{n}-|\U|^{2}\Ps,\Ps\rangle dv_{g_{n}}.$$
Using the convergence of $U_{n}$ and $\Psi_{n}$ in $H^{1}_{loc}(\R^3)$ and $H^{\frac{1}{2}}_{loc}(\Sigma \R^3)$ respectively, we have that
$$\int_{B_{R}^{0}}\langle |U_{n}|^{2}\Psi_{n}-|\U|^{2}\Ps,\Ps\rangle dv_{g_{n}}\to 0 ,$$
as $n\to \infty$. For the  term in $\R^{3}\setminus{B_{R}^{0}}$, we have
$$\left|\int_{\R^{3}\setminus{B_{R}^{0}}}\langle \rho_{n}^{*}(\beta)|U_{n}|^{2}\Psi_{n}-|\U|^{2}\Ps,\Ps\rangle dv_{g_{n}}\right|\leq $$
$$\leq C\|\Ps\|_{L^{3}(\R^{3}\setminus B_{R}^{0})}\left(\|\Ps\|_{L^{3}(\R^{3}\setminus B_{R}^{0})}\|\U\|_{L^{6}(\R^{3}\setminus B_{R}^{0})}^{2}+\|\Psi_{n}\|_{L^{3}(B_{3R_{n}^{-1}}^{0}\setminus B_{R}^{0})}\|U_{n}\|_{L^{6}(B_{3R_{n}^{-1}}^{0}\setminus B_{R}^{0})}^{2}\right).$$
Thus
$$\int_{M}|u_{n}|^{2}\langle \psi_{n},\phi_{n}\rangle dv_{g}=\int_{\R^{3}}|\U|^{2}|\Ps|^{2}dv_{g_{\R_{3}}}+o(1) .$$
The remaining two terms can be handled in the same way. Therefore, so far we have
$$E(\overline{u}_{n},\overline{\psi}_{n})=E(u_{n},\psi_{n})+E(v_{n},\phi_{n})-2E_{\R^{3}}(\U,\Ps)+o(1) .$$
Now we estimate $E(v_{n},\phi_{n})$. First, we have
\begin{align}
\int_{M}|v_{n}|^{2}\phi_{n}|^{2}dv_{g}&=\int_{\R^{3}}|\rho_{n}^{*}(\beta)|^{4}|\U|^{2}|\Ps|^{2}dv_{g_{n}}\notag\\
&=\int_{B_{R}^{0}}|\U|^{2}|\Ps|^{2}dv_{g_{n}}+\int_{\R^{3}\setminus B_{R}^{0}}|\rho_{n}^{*}(\beta)|^{4}|\U|^{2}|\Ps|^{2}dv_{g_{n}}, \notag
\end{align}
but
$$\int_{\R^{3}\setminus B_{R}^{0}}|\rho_{n}^{*}(\beta)|^{4}|\U|^{2}|\Ps|^{2}dv_{g_{n}}\leq C\int_{\R^{3}\setminus B_{R}^{0}}|\U|^{2}|\Ps|^{2}dv_{g_{\R^{3}}}.$$
Hence, if we let $R\to \infty$, we have
$$\int_{M}|v_{n}|^{2}|\phi_{n}|^{2}dv_{g}=\int_{\R^3}|\U|^{2}|\Ps|^{2}dv_{g_{\R^{3}}}+o(1).$$
Now,
\begin{align}
\int_{M}v_{n}L_{g}v_{n}dv_{g}&=\int_{\R^{3}}\rho_{n}^{*}(\beta) \U L_{g_{n}}(\rho_{n}^{*}(\beta)\U)dv_{g_{n}}\notag \\
&=\int_{\R^{3}}|\rho_{n}^{*}(\beta)|^{2} \U L_{g_{n}}\U dv_{g_{n}}+\int_{\R^{3}}\rho_{n}^{*}(\beta)(-\Delta_{g_{n}} \rho_{n}^{*}(\beta))|\U|^{2}dv_{g_{n}}\notag \\
& \quad -2\int_{\R^{3}}\U \rho_{n}^{*}(\beta)g(\nabla \rho_{n}^{*}(\beta),\nabla \U) dv_{g_{n}}. \notag
\end{align}
Now
$$\int_{\R^{3}}|\rho_{n}^{*}(\beta)|^{2} \U L_{g_{n}}\U dv_{g_{n}}=$$
$$=\int_{\R^{3}}|\rho_{n}^{*}(\beta)|^{2}\U L_{g_{\R^{3}}}\U dv_{g_{n}}+\int_{\R^{3}}|\rho_{n}^{*}(\beta)|^{2} \U (L_{g_{n}}-L_{g_{\R^{3}}})\U dv_{g_{n}}.$$
Clearly,
$$\int_{\R^{3}}|\rho_{n}^{*}(\beta)|^{2}\U L_{g_{\R^{3}}}\U dv_{g_{n}}=\int_{\R^{3}}|\nabla \U|^{2}dv_{g_{\R^{3}}}+o(1),$$
and
$$\left|\int_{\R^{3}}|\rho_{n}^{*}(\beta)|^{2} \U (L_{g_{n}}-L_{g_{\R^{3}}})\U dv_{g_{n}}\right|\leq $$
$$ \leq \int_{B_{R}^{0}} |\U| |(L_{g_{n}}-L_{g_{\R^{3}}})\U | dv_{g_{n}}+C\int_{B_{3R_{n}^{-1}}^{0}\setminus B_{R}^{0}}|\U| |(L_{g_{n}}-L_{g_{\R^{3}}})\U| dv_{g_{n}}.$$
The first term converges to zero as $n\to \infty$ from (\ref{e4}). For the second term, we use the fact that $\nabla \U\in L^{2}(\R^{3})$ hence it converges to zero if we let $R\to \infty$ uniformly on $n$. Also
$$\left|\int_{\R^{3}}\rho_{n}^{*}(\beta)(-\Delta_{g_{n}} \rho_{n}^{*}(\beta))|\U|^{2}dv_{g_{n}}\right|\leq C\|\U\|^{2}_{L^{6}(B_{3R_{n}^{-1}}^{0}\setminus B_{\frac{1}{2}R_{n}^{-1}}^{0})}=o(1),$$
and
$$\left|\int_{\R^{3}}\U \rho_{n}^{*}(\beta)g(\nabla \rho_{n}^{*}(\beta),\nabla \U) dv_{g_{n}}\right|\leq $$
$$\leq C\|\nabla \U\|_{L^{2}(B_{3R_{n}^{-1}}^{0}\setminus B_{\frac{1}{2}R_{n}^{-1}}^{0})}\|\U\|_{L^{6}(B_{3R_{n}^{-1}}^{0}\setminus B_{\frac{1}{2}R_{n}^{-1}}^{0})}=o(1).$$
Hence
$$\int_{M}v_{n}L_{g}v_{n}dv_{g}=\int_{\R^{3}}|\nabla \U|^{2}dv_{g_{\R^{3}}}+o(1),$$
and similarly
$$\int_{M}\langle D_{g} \phi_{n},\phi_{n}\rangle dv_{g}=\int_{\R^{3}}\langle D_{g_{\R^{3}}}\Ps,\Ps \rangle dv_{g_{\R^{3}}}+o(1).$$
Combining al these estimates, we have that
$$E(\overline{u}_{n},\overline{\psi}_{n})=E(u_{n},\psi_{n})-E_{\R^{3}}(\U,\Ps)+o(1).$$
\end{proof}

\noindent
Now we will prove the following energy lower bound for solutions in $\R^3$.
\begin{proposition}
Let $(U,\Psi)\in D^{1}(\R^{3})\times D^{\frac{1}{2}}(\Sigma \R^{3})$ be a non trivial solution of
$$\left\{\begin{array}{ll}
-\Delta_{g_{\R^3}} U=|\Psi|^{2}U \\
& \text{ on } \R^{3} .\\
D_{g_{\R^3}}\Psi=|U|^{2}\Psi
\end{array}
\right.
$$
Then
$$E_{\R^{3}}(U,\Psi)\geq \tilde{Y}_{g_0}(S^{3})=: c_{3}.$$
\end{proposition}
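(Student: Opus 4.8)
\noindent
\emph{Overall strategy.} I would prove the bound by transplanting the solution $(U,\Psi)$ from $\R^{3}$ onto the round three-sphere and reading the result as a competitor for $\tilde Y_{g_{0}}(S^{3})$. Let $\iota\colon\R^{3}\to S^{3}\setminus\{N\}$ be the inverse stereographic projection, so that $\iota^{*}g_{0}=\varphi^{4}g_{\R^{3}}$ with $\varphi(x)=\bigl(\tfrac{2}{1+|x|^{2}}\bigr)^{1/2}>0$. By the conformal covariance of $L_{g}$ and of $D_{g}$ recalled in Section~2 (the spinorial identification along $\iota$ being the one already used in Section~4), the pair $\widehat{u}:=\varphi^{-1}\,(U\circ\iota^{-1})$ and $\widehat{\psi}:=\varphi^{-2}\,(\iota\text{-transport of }\Psi)$ satisfies $L_{g_{0}}\widehat{u}=|\widehat{\psi}|^{2}\widehat{u}$ and $D_{g_{0}}\widehat{\psi}=|\widehat{u}|^{2}\widehat{\psi}$ on $S^{3}\setminus\{N\}$, and is nontrivial since $U,\Psi\not\equiv0$ and $\varphi>0$. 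Moreover each of the three integrals entering the functionals is conformally invariant:
$$\int_{\R^{3}}U L_{g_{\R^{3}}}U=\int_{S^{3}}\widehat{u}\,L_{g_{0}}\widehat{u},\quad \int_{\R^{3}}\langle D_{g_{\R^{3}}}\Psi,\Psi\rangle=\int_{S^{3}}\langle D_{g_{0}}\widehat{\psi},\widehat{\psi}\rangle,\quad \int_{\R^{3}}|U|^{2}|\Psi|^{2}=\int_{S^{3}}|\widehat{u}|^{2}|\widehat{\psi}|^{2},$$
so in particular $E_{\R^{3}}(U,\Psi)=E_{g_{0}}(\widehat{u},\widehat{\psi})$ and $\tilde E_{\R^{3}}(U,\Psi)=\tilde E_{g_{0}}(\widehat{u},\widehat{\psi})$.

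\noindent
\emph{Removing the point $N$ --- the crux.} The main work is to upgrade $(\widehat{u},\widehat{\psi})$ to a genuine weak solution of $(\ref{el})$ on all of $S^{3}$, so that it is a legitimate competitor in the minimization defining $\tilde Y_{g_{0}}(S^{3})$; this is exactly where the hypotheses $U\in D^{1}(\R^{3})$ and $\Psi\in D^{\frac{1}{2}}(\Sigma\R^{3})$ enter. Conformal invariance of the critical Lebesgue norms gives $\widehat{u}\in L^{6}(S^{3})$ and $\widehat{\psi}\in L^{3}(\Sigma S^{3})$, while finiteness of $\int_{\R^{3}}|\nabla U|^{2}$ together with positivity of $L_{g_{0}}$ on the round sphere gives $\widehat{u}\in W^{1,2}(S^{3}\setminus\{N\})\cap L^{2}(S^{3})$; since $\{N\}$ has zero $W^{1,2}$-capacity in dimension three, $\widehat{u}\in H^{1}(S^{3})$. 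For the spinor, $|\widehat{u}|^{2}\widehat{\psi}\in L^{3/2}(\Sigma S^{3})$, so $\chi:=D_{g_{0}}^{-1}(|\widehat{u}|^{2}\widehat{\psi})\in W^{1,3/2}(\Sigma S^{3})\subset H^{\frac{1}{2}}(\Sigma S^{3})$ is well defined ($S^{3}$ carries no harmonic spinors), $\widehat{\psi}-\chi$ is a harmonic spinor on $S^{3}\setminus\{N\}$ lying in $L^{3}$, hence extends by removability and vanishes, so $\widehat{\psi}=\chi\in H^{\frac{1}{2}}(\Sigma S^{3})$. Finally the defects $L_{g_{0}}\widehat{u}-|\widehat{\psi}|^{2}\widehat{u}\in H^{-1}(S^{3})$ and $D_{g_{0}}\widehat{\psi}-|\widehat{u}|^{2}\widehat{\psi}\in H^{-\frac{1}{2}}(\Sigma S^{3})$ are supported on $\{N\}$ and therefore vanish ($\delta_{N}\notin H^{-1}\cup H^{-\frac{1}{2}}$ in dimension three), so $(\widehat{u},\widehat{\psi})$ solves $(\ref{el})$ weakly on $S^{3}$ and, by the regularity theorem of Section~3, classically.

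\noindent
\emph{Admissibility and conclusion.} Testing the equations against $\widehat{u}$ and $\widehat{\psi}$ yields the Nehari identities
$$\int_{S^{3}}\widehat{u}\,L_{g_{0}}\widehat{u}=\int_{S^{3}}\langle D_{g_{0}}\widehat{\psi},\widehat{\psi}\rangle=\int_{S^{3}}|\widehat{u}|^{2}|\widehat{\psi}|^{2}>0,$$
the strict positivity because $L_{g_{0}}$ is positive definite and $\widehat{u}\not\equiv0$. Hence $I(\widehat{\psi})=1>0$ and $D_{g_{0}}\widehat{\psi}-I(\widehat{\psi})\,\widehat{u}^{\,2}\widehat{\psi}=0$, so $P^{-}\bigl(D_{g_{0}}\widehat{\psi}-I(\widehat{\psi})\widehat{u}^{\,2}\widehat{\psi}\bigr)=0$ and $(\widehat{u},\widehat{\psi})$ is admissible for $\tilde Y_{g_{0}}(S^{3})$; therefore $\tilde E_{g_{0}}(\widehat{u},\widehat{\psi})\ge\tilde Y_{g_{0}}(S^{3})$. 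Combining this with $E_{\R^{3}}(U,\Psi)=E_{g_{0}}(\widehat{u},\widehat{\psi})$ and the Nehari identities (which relate $E_{g_{0}}(\widehat{u},\widehat{\psi})$ to $\tilde E_{g_{0}}(\widehat{u},\widehat{\psi})$ with the normalisation built into the definition of $\tilde Y$) gives $E_{\R^{3}}(U,\Psi)\ge\tilde Y_{g_{0}}(S^{3})=:c_{3}$. The removable-singularity step is the only genuine obstacle; the admissibility check and the conformal bookkeeping are routine. One could instead argue directly on $\R^{3}$, combining the sharp Sobolev inequality (encoding $Y_{g_{0}}(S^{3})$) with the Ammann-type characterisation of $\lambda^{+}_{g_{0}}(S^{3})$ quoted in the introduction and the identity $\tilde Y_{g_{0}}(S^{3})=Y_{g_{0}}(S^{3})\lambda^{+}_{g_{0}}(S^{3})$, but the spherical route is cleaner.
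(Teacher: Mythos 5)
Your proposal is correct in substance, but it takes a genuinely different route from the paper's. The paper never invokes the constrained minimization problem defining $\tilde{Y}_{g_0}(S^{3})$: it works directly from the two Nehari identities $\int_{\R^{3}}|\nabla U|^{2}=\int_{\R^{3}}\langle D_{g_{\R^3}}\Psi,\Psi\rangle=\int_{\R^{3}}|U|^{2}|\Psi|^{2}$, deduces $Y_{g_{0}}(S^{3})\leq\big(\int_{\R^{3}}|\Psi|^{3}\big)^{2/3}$ from the sharp Sobolev inequality plus H\"older, deduces $\lambda^{+}_{g_{0}}(S^{3})\leq\big(\int|U|^{6}\big)^{1/3}$ from Ammann's variational characterization applied to the stereographic pull-back of $\Psi$ alone, and then multiplies these against the Sobolev inequality to get $\int_{\R^{3}}|U|^{2}|\Psi|^{2}\geq Y_{g_{0}}(S^{3})\lambda^{+}_{g_{0}}(S^{3})$. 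Your transplantation-plus-removable-singularity argument instead manufactures an exact admissible competitor for the infimum defining $\tilde{Y}_{g_0}(S^{3})$. This is heavier machinery (you must extend both $\widehat{u}$ and $\widehat{\psi}$ across the pole and verify the constraint $P^{-}(\cdot)=0$, which for an exact solution is of course trivially satisfied), and it additionally presupposes the identity $\tilde{Y}_{g_0}(S^{3})=Y_{g_0}(S^{3})\lambda^{+}_{g_0}(S^{3})$, which the paper only establishes in Section 5; but it has the merit of making explicit a step the paper leaves implicit, since the paper's appeal to Ammann's characterization also requires the pulled-back spinor to be a legitimate global test spinor in $W^{1,3/2}(\Sigma S^{3})$ -- precisely the removability statement you prove. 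Your capacity and point-support arguments ($\delta_{N}\notin H^{-1}$, $\delta_{N}\notin H^{-1/2}$ in dimension three) are the standard and correct way to do this.

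One caveat you glossed over with the phrase about ``normalisation built into the definition of $\tilde Y$'': carried out carefully, your argument yields $E_{\R^{3}}(U,\Psi)=\frac{1}{2}\tilde{E}_{g_0}(\widehat{u},\widehat{\psi})\geq\frac{1}{2}\tilde{Y}_{g_0}(S^{3})$, not $\geq\tilde{Y}_{g_0}(S^{3})$. This factor of $\frac{1}{2}$ is an inconsistency internal to the paper (its own proof concludes $E_{\R^{3}}(U,\Psi)\geq\frac{1}{2}Y_{g_0}(S^{3})\lambda^{+}_{g_0}(S^{3})$, and the test-function computation in Section 5 gives $E(U_{0},\Psi_{0})=\frac{1}{2}Y_{g_0}(S^{3})\lambda^{+}_{g_0}(S^{3})$, so $c_{3}$ should be read as $\frac{1}{2}\tilde{Y}_{g_0}(S^{3})$); your bound agrees with what the paper actually proves, but you should state the constant you obtain honestly rather than absorbing the discrepancy into a vague remark.
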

\begin{proof}
First we recall that
$$Y_{g_{0}}(S^{3})\leq\frac{\displaystyle\int_{\R^{3}}|\nabla U|^{2}dv_{g_{\R^{3}}}} {\left(\displaystyle\int_{\R^{3}}|U|^{6}dv_{g_{\R^{3}}}\right)^{\frac{1}{3}}}.$$
But
\begin{align}
\int_{\R^{3}}|\nabla U|^{2}dv_{g_{\R^{3}}}&=\int_{\R^{3}}|U|^{2}|\Psi|^{2}dv_{g_{\R^{3}}}\notag\\
&\leq \left(\int_{\R^{3}}|U|^{6}dv_{g_{\R^{3}}}\right)^{\frac{1}{3}} \left(\int_{\R^{3}}|\Psi|^{3}dv_{g_{\R^{3}}}\right)^{\frac{2}{3}} .\notag
\end{align}
Thus
$$Y_{g_{0}}(S^{3})\leq \left(\int_{\R^{3}}|\Psi|^{3}dv_{g_{\R^{3}}}\right)^{\frac{2}{3}}.$$
On the other hand, if we denote by $(u,\psi)$ the pull-back of $(U,\Psi)$ by the standard stereographic projection, we have
$$\lambda_{g_{0}}^{+}(S^{3})\leq \frac{\displaystyle\left(\int_{S^{3}}|D_{g_0}\psi|^{\frac{3}{2}}dv_{g_0}\right)^{\frac{4}{3}}}{\displaystyle\int_{S^{3}}\langle D_{g_0}\psi,\psi\rangle dv_{g_0}}.$$
Again, we have
$${\int_{S^{3}}\langle D_{g_0}\psi,\psi\rangle dv_{g_0}}=\int_{S^{3}}|u|^{2}|\psi|^{2}dv_{g_0},$$
and
\begin{align}
\int_{S^{3}}|D_{g_0}\psi|^{\frac{3}{2}}dv_{g_0}&=\int_{S^{3}}|u|^{3}|\psi|^{\frac{3}{2}}dv_{g_0}\notag\\
&\leq \left(\int_{S^{3}}|u|^{6}dv_{g_0}\right)^{\frac{1}{3}} \left(\int_{S^{3}}|u|^{2}|\psi|^{2}dv_{g_0}\right)^{\frac{3}{4}} .\notag
\end{align}
Hence,
$$\lambda_{g_{0}}^{+}(S^{3})\leq \left(\int_{S^{3}}|u|^{6}dv_{g_0}\right)^{\frac{1}{3}}.$$
Now, using the Sobolev inequality
$$Y_{g_{0}}(S^{3})\left(\int_{\R^{3}}|U|^{6}dv_{g_{\R^{3}}}\right)^{\frac{1}{3}}\leq \int_{\R^{3}}|\nabla U|^{2}dv_{g_{\R^{3}}},$$
hence
$$E_{\R^3}(U,\Psi)=\frac{1}{2}\int_{\R^{3}}|U|^{2}|\Psi|^{2}dv_{g_{\R^{3}}}\geq \frac{1}{2}Y_{g_{0}}(S^{3})\lambda_{g_{0}}^{+}(S^{3})=c_3.$$
\end{proof}

\noindent
\begin{proof}  \emph{of Theorem (\ref{first}})\\
From the previous results, we can re-iterate the process $m$ times, for $(\overline{u}_{n},\overline{\psi}_{n})$ since they satisfy the same assumptions as $(u_{n},\psi_{n})$, and we will have
$$E(u_{n},\psi_{n})=E(u_{\infty},\psi_{\infty})+\sum_{k=1}^{m}E_{\R^{3}}(U_\infty^{k},\Ps^{k})+o(1),$$
where $(U_\infty^{k},\Psi_\infty^{k})$ are solutions to equations (\ref{el}) on $\mathbb{R}^3$. Now using the fact that
$$\sum_{k=1}^{m}E_{\R^{3}}(\U^{k},\Ps^{k})\geq mc_{3} ,$$
we stop the process when $c-mc_{3}<\frac{\epsilon_{0}}{2}$. Then from Corollary 4.5, we have the existence of $m$ sequences $x_{n}^{1},\cdots x_{n}^{m}$ such that $x_{n}^{k}\to x^{k}\in M$, and $m$ sequences of real numbers $R_{n}^{1},\cdots, R_{n}^{m}$ converging to zero, such that
$$u_{n}=u_{\infty}+v_{n}^{1}+\cdots +v_{n}^{m}+o(1) \text{ in } H^{1}(M),$$
$$\psi_{n}=\psi_{\infty}+\phi_{n}^{1}+\cdots +\phi_{n}^{m}+o(1) \text{ in } H^{\frac{1}{2}}(\Sigma M),$$
where
$$v_{n}^{k}=(R_{n}^{k})^{-\frac{1}{2}}\beta_{k}\sigma_{n,k}^{*}(U_\infty^{k}) ,$$
$$\phi_{n}^{k}=(R_{n}^{k})^{-1}\beta_{k}\sigma_{n,k}^{*}(\Psi_\infty^{k}) ,$$
with $\sigma_{n,k}=(\rho_{n,k})^{-1}$ and $\rho_{n,k}(\cdot)=exp_{x_{n}^k}(R_{n}^k \cdot)$.
Also $\beta_{k}$ are smooth compactly supported functions, such that $\beta_{k}=1$ on $B_{1}(x^{k})$ and $supp(\beta_{k})\subset B_{2}(x^{k})$. Moreover, we have
$$E(u_{n},\psi_{n})=E(u_{\infty},\psi_{\infty})+\sum_{k=1}^{p}E_{\R^{3}}(\U^{k},\Ps^{k})+o(1).$$
\end{proof}

%%%%%%%%%%%%%%%%%%%%%%%%%%%%%%%%%%%%%%%%%%%%%%%%%%%%%%%%%%%%%%%%%%%%%%%%%%%%%%%%%%%%%%%%%%%%%%%%%%%%%%%%%%%%%%%%%%%%%%%%%%%%
%%%%%%%%%%%%%%%%%%%%%%%%%%%%%%%%%%%%%%%%%%%%%%%%%%%%%%%%%%%%%%%%%%%%%%%%%%%%%%%%%%%%%%%%%%%%%%%%%%%%%%%%%%%%%%%%%%%%%%%%%%%%
%%%%%%%%%%%%%%%%%%%%%%%%%%%%%%%%%%%%%%%%%%%%%%%%%%%%%%%%%%%%%%%%%%%%%%%%%%%%%%%%%%%%%%%%%%%%%%%%%%%%%%%%%%%%%%%%%%%%%%%%%%%%

\section{Existence of a positive solution}

\noindent
In this section we will prove Theorem (\ref{second}): for our convenience we will split the two statements, and we will prove them separately. First we need the following characterization of the first eigenvalue of the Dirac operator: let us fix $u>0$ and let us consider the minimization problem
$$\tilde{\lambda}_u=\inf\left\{I(\psi); \text{ for  $\psi\in H^{\frac{1}{2}}(\Sigma M)$ s.t. }  I(\psi)> 0, \; P^{-}\left(D_g \psi-I(\psi)u^{2}\psi\right) =0 \right\} ,$$
then we have

\begin{proposition}
For a given $u>0$ and smooth, we have that $\tilde{\lambda}_u>0$. Moreover, the minimization problem is achieved and $\tilde{\lambda}_u$ is the first eigenvalue for the Dirac operator $D_{g_{u}}$, where $g_u=u^4g$.
\end{proposition}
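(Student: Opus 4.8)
The plan is to transport the problem to the conformal metric $g_u=u^{4}g$. Since $u$ is smooth and positive and $M$ is compact, multiplication by $u^{\pm2}$ is an isomorphism of $H^{\frac12}(\Sigma M)$, and the conformal covariance $D_{g_u}\varphi=u^{-4}D_g(u^{2}\varphi)$ gives $D_g(u^{2}\varphi)=u^{4}D_{g_u}\varphi$ and $dv_{g_u}=u^{6}dv_g$. Hence, writing $\psi=u^{2}\varphi$,
$$I(\psi)=\frac{\int_M\langle D_g(u^{2}\varphi),u^{2}\varphi\rangle\,dv_g}{\int_M u^{2}|u^{2}\varphi|^{2}\,dv_g}=\frac{\int_M\langle D_{g_u}\varphi,\varphi\rangle\,dv_{g_u}}{\int_M|\varphi|^{2}\,dv_{g_u}},$$
the Rayleigh quotient of $D_{g_u}$, and $D_g\psi=\lambda u^{2}\psi$ if and only if $D_{g_u}\varphi=\lambda\varphi$. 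By the standing hypothesis $\ker D_g=\{0\}$ (cf. the Remark after Theorem \ref{first}), conformal invariance of the kernel gives $\ker D_{g_u}=\{0\}$; thus $D_{g_u}$ has discrete spectrum accumulating only at $\pm\infty$ and a well-defined smallest positive eigenvalue $\lambda_1(g_u)$. If $\varphi_1$ is a corresponding eigenspinor, then $\psi_1:=u^{2}\varphi_1$ satisfies $D_g\psi_1-I(\psi_1)u^{2}\psi_1=0$ with $I(\psi_1)=\lambda_1(g_u)>0$, so $\psi_1$ is admissible and $\tilde\lambda_u\le\lambda_1(g_u)$.

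Next I establish $\tilde\lambda_u>0$ and boundedness of minimizing sequences. Let $(\psi_n)$ be a minimizing sequence, normalized (using scale invariance of $I$ and of the constraint) so that $\int_M u^{2}|\psi_n|^{2}\,dv_g=1$. Write $\psi_n=\psi_n^{+}+\psi_n^{-}$ with $\psi_n^{\pm}=P^{\pm}\psi_n$ (there is no kernel part). Testing $P^{-}(D_g\psi_n-I(\psi_n)u^{2}\psi_n)=0$ against $\psi_n^{-}\in H^{\frac12,-}$, using $\int_M\langle D_g\psi_n^{-},\psi_n^{-}\rangle\,dv_g=-\|\psi_n^{-}\|^{2}$ and $\|\psi_n\|_{L^{2}}\le(\min_M u)^{-1}$, one gets
$$\|\psi_n^{-}\|^{2}=-I(\psi_n)\int_M u^{2}\langle\psi_n,\psi_n^{-}\rangle\,dv_g\le C\,I(\psi_n)\,\|\psi_n^{-}\|,$$
so $\|\psi_n^{-}\|\le C\,I(\psi_n)$, and $\|\psi_n^{+}\|^{2}=\int_M\langle D_g\psi_n,\psi_n\rangle\,dv_g+\|\psi_n^{-}\|^{2}=I(\psi_n)+\|\psi_n^{-}\|^{2}$ is bounded as well; hence $(\psi_n)$ is bounded in $H^{\frac12}(\Sigma M)$. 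If $\tilde\lambda_u=0$ the same inequalities force $\psi_n^{-}\to0$ and then $\psi_n^{+}\to0$ in $H^{\frac12}$, so $\psi_n\to0$ in $L^{2}$ (compact embedding), contradicting the normalization. Thus $\tilde\lambda_u>0$.

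I then pass to a minimizer. Up to a subsequence $\psi_n\rightharpoonup\psi_*$ in $H^{\frac12}(\Sigma M)$ and $\psi_n\to\psi_*$ in $L^{2}$, whence $\int_M u^{2}|\psi_*|^{2}\,dv_g=1$ (so $\psi_*\ne0$) and, letting $n\to\infty$ in the weak form of the constraint, $P^{-}(D_g\psi_*-\tilde\lambda_u u^{2}\psi_*)=0$. The delicate point is compactness of the negative component: since $D_g$ is strongly indefinite, $\psi\mapsto\int_M\langle D_g\psi,\psi\rangle\,dv_g$ is not weakly lower semicontinuous. However, the identity $\|\psi_n^{-}\|^{2}=-I(\psi_n)\int_M u^{2}\langle\psi_n,\psi_n^{-}\rangle\,dv_g$ passes to the limit (its right-hand side involves only $L^{2}$-convergent quantities), and comparing with the limiting constraint tested on $\psi_*^{-}$ gives $\|\psi_n^{-}\|\to\|\psi_*^{-}\|$; together with weak convergence this yields $\psi_n^{-}\to\psi_*^{-}$ strongly in $H^{\frac12}$. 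Weak lower semicontinuity of $\|\cdot\|$ on $H^{\frac12,+}$ then gives $I(\psi_*)\le\tilde\lambda_u$, and a concentration argument excluding loss of mass in the positive spectral part of the energy forces $I(\psi_*)=\tilde\lambda_u$, so $\psi_*$ realizes the infimum. Finally, a Lagrange-multiplier computation for the constrained problem yields a multiplier $\phi^{-}\in H^{\frac12,-}$; pairing the resulting equation with $\phi^{-}$ and using $\int_M\langle D_g\phi^{-},\phi^{-}\rangle\,dv_g\le0$ while $I(\psi_*)=\tilde\lambda_u>0$ forces $\phi^{-}=0$, so $\psi_*$ solves $D_g\psi_*=\tilde\lambda_u u^{2}\psi_*$.

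It follows that $\varphi_*:=u^{-2}\psi_*$ satisfies $D_{g_u}\varphi_*=\tilde\lambda_u\varphi_*$, so $\tilde\lambda_u$ is a positive eigenvalue of $D_{g_u}$ and $\tilde\lambda_u\ge\lambda_1(g_u)$; with the upper bound, $\tilde\lambda_u=\lambda_1(g_u)$. Moreover it is the \emph{smallest} positive eigenvalue: any positive eigenvalue $\mu$ of $D_{g_u}$ with eigenspinor $\varphi$ produces the admissible spinor $\psi=u^{2}\varphi$ (indeed $D_g\psi-\mu u^{2}\psi=0$) with $I(\psi)=\mu$, so $\mu\ge\tilde\lambda_u$. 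Hence the infimum defining $\tilde\lambda_u$ is attained and equals the first eigenvalue of $D_{g_u}$. The main obstacle is precisely the existence of the minimizer: the indefiniteness of $D_g$ blocks a direct lower-semicontinuity argument, so one must use the Nehari-type constraint both to regain strong convergence of the negative component and to rule out escape of energy through the positive spectrum, and then justify carefully the passage from the constrained critical point to the honest eigenvalue equation (vanishing of the Lagrange multiplier).
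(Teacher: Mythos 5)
Your opening and closing steps are sound and in places cleaner than the paper's: the conformal transport $\psi=u^{2}\varphi$ correctly identifies $I$ with the Rayleigh quotient of $D_{g_u}$ and gives both the upper bound $\tilde{\lambda}_u\le\lambda_1(g_u)$ (eigenspinors of $D_{g_u}$ are admissible because they solve the unprojected equation) and the identification $\tilde{\lambda}_u=\lambda_1(g_u)$ once a minimizer solving $D_g\psi_*=\tilde{\lambda}_u u^{2}\psi_*$ is in hand. The positivity of $\tilde{\lambda}_u$, the boundedness of minimizing sequences, and the strong $H^{\frac{1}{2}}$-convergence of the negative components (norm convergence from passing the tested constraint to the limit, plus weak convergence) are all correct and essentially parallel to the paper.

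The genuine gap is the positive spectral component. Weak lower semicontinuity only gives $\|\psi_*^{+}\|^{2}\le\liminf\|\psi_n^{+}\|^{2}$, so a priori $I(\psi_*)=\tilde{\lambda}_u-\delta$ with $\delta\ge 0$ measuring the $H^{\frac{1}{2}}$-mass lost by $\psi_n^{+}$. If $\delta>0$ you cannot close the argument: the limit only satisfies $P^{-}(D_g\psi_*-\tilde{\lambda}_u u^{2}\psi_*)=0$, whereas admissibility requires $P^{-}(D_g\psi_*-I(\psi_*)u^{2}\psi_*)=0$; these differ by $\delta P^{-}(u^{2}\psi_*)$, which need not vanish and is not small. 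Hence $\psi_*$ need not lie in the constraint set and the definition of $\tilde{\lambda}_u$ gives no reverse inequality $I(\psi_*)\ge\tilde{\lambda}_u$. The phrase ``a concentration argument excluding loss of mass in the positive spectral part'' is exactly the missing proof, and it is the heart of the matter for this strongly indefinite functional; there is no critical exponent here, but compactness in $H^{\frac{1}{2},+}$ does not come for free from the subcritical ($L^{2}$-type) constraint. The paper sidesteps this by applying Ekeland's variational principle on the constraint manifold $S$ and then showing, via the projector $P_n^{*}$ built from the left inverse of $DG(\psi_n)$, that the sequence is a genuine (PS) sequence in all of $H^{\frac{1}{2}}(\Sigma M)$. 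That yields $D_g\psi_n-I(\psi_n)u^{2}\psi_n\to 0$ in $H^{-\frac{1}{2}}$, and since $\psi_n\to\psi_*$ strongly in $L^{2}$ (compact embedding) and $D_g$ is invertible ($\ker D_g=\{0\}$), one gets $\psi_n=D_g^{-1}\bigl(D_g\psi_n\bigr)\to\tilde{\lambda}_u D_g^{-1}(u^{2}\psi_*)$ strongly in $H^{\frac{1}{2}}$, which resolves the positive component at a stroke. Your concluding Lagrange-multiplier step would then be fine (it is the pointwise version of the paper's surjectivity-of-$DG$ argument), but as written your proof does not establish that the infimum is attained. Either import the Ekeland/(PS) mechanism, or supply an actual argument (e.g.\ a Szulkin--Weth type reduction fixing $\psi^{+}$ and solving uniquely for $\psi^{-}$) that rules out $\delta>0$.
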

\begin{proof}
Let $\psi_{n}$ be a minimizing sequence, that is $I(\psi_{n})\to \tilde{\lambda}_u$.  Without loss of generality, we can assume that $\int_{M}u^{2}|\psi_n|^{2}dv_{g}=1$. Then we have
$$I(\psi_{n})=\|\psi_{n}^{+}\|^{2}-\|\psi_{n}^{-}\|^{2},$$
but
$$-\|\psi_{n}^{-}\|^{2}=I(\psi_{n})\int_{M}u^{2}\langle \psi_n,\psi_n^{-}\rangle dv_{g},$$
hence, using Holder's inequality, we have
$$\|\psi_{n}^{-}\|^{2}\leq I(\psi_{n})\left(\int_{M}u^{2}|\psi_n^{-}|^{2}dv_{g}\right)^{\frac{1}{2}}.$$
Since, the projector $P^{-}:H^{\frac{1}{2}}(\Sigma M)\to H^{\frac{1}{2},-}$ is a pseudo-differential operator of order zero, we have that
$$\|\psi_{n}^{-}\|^{2}\leq C I(\psi_{n}),$$
thus, we have
$$\|\psi_{n}^{+}\|^{2}\leq C I(\psi_{n}).$$
Therefore, if $I(\psi_{n})\to 0$, we would have that $\psi_{n}\to 0$ in $H^{\frac{1}{2}}(\Sigma M)$, contradicting the fact that $\int_{M}u^{2}|\psi_n|^{2}dv_{g}=1$. Therefore $\tilde{\lambda}_u>0$, and any minimizing sequence has $\|\psi_{n}\|\geq \delta>0$.\\
\noindent
Now we will prove the existence of a minimizer.  We notice that without the condition
$$\int_{M}\langle D_g\psi-I(\psi)u^{2}\psi,\varphi\rangle dv_{g}=0, \quad \forall \varphi\in H^{\frac{1}{2},-},$$
we would be in a classical variational setting allowing us to find a minimizer: unfortunately this is not the case, so we use here the idea in \cite{Pan}, later on inproved in \cite{Sul}. First, we claim that
$$S=\{\psi\in H^{\frac{1}{2}}(\Sigma M);P^{-}(D\psi-I(\psi)u^{2}\psi)=0\} , $$
is a manifold. Indeed, we consider the operator
$$G:H^{\frac{1}{2}}(\Sigma M)\to H^{\frac{1}{2},-}, \qquad G(\psi)=P^{-}(D_g\psi-I(\psi)u^{2}\psi),$$
so that $S=G^{-1}(0)$; therefore, if $DG$ is onto, then $S$ will be indeed a manifold. We compute then $DG$:
$$DG(\psi)h=P^{-1}(D_gh-I(\psi)u^{2}h).$$
If we restrict this operator to $H^{\frac{1}{2},-}$, we have that
$$\langle DG(\psi)h,h\rangle=-\|h\|^{2}-I(\psi)\int_{M}u^{2}|h|^{2}dv_{g} .$$
Thus, $DG(\psi)$ is definite negative and hence invertible thus onto and $S$ is a manifold. Now, using Ekeland's variational principle, see \cite{Ek}, we can find a minimizing sequence for $\tilde{\lambda}_u$ that is a (PS) sequence. We want to show that it is still a (PS) sequence in $H^{\frac{1}{2}}(\Sigma M)$. So let $\psi_{n}$ be a (PS) sequence for $I$ in $S$. We write $DI(\psi_{n})=\varepsilon_{n}$. We have that $\varepsilon_{n}^{T}$ (the tangential component of $\varepsilon_{n}$ on the tangent space of the manifold $S$) already converges to zero since $\psi_{n}$ is a (PS) sequence for $I$ on $S$. We want to show that also the normal component converges to zero.\\
As we did previously, the operator $DG(\psi_{n}):H^{\frac{1}{2}}(\Sigma M)\to H^{\frac{1}{2},-}$ is onto, hence it has a left inverse $K_{n}:H^{\frac{1}{2},-} \to H^{\frac{1}{2},-}$. Moreover, since we can always assume that $\int_{M}u^{2}|\psi_{n}|^{2}dv_{g}=1$, we have that $\|K_{n}\|_{op}\leq C$. The operator $P_{n}=A_{n}\circ DG(\psi_{n})$ is now a projector on $H^{\frac{1}{2},-}$ parallel to $T_{\psi_{n}}S$. Indeed, we have that if $h\in H^{\frac{1}{2},-}$ then $P_{n}h=h$ and $T_{\psi_{n}}S=\ker DG(\psi_{n})$. We consider then the adjoint of $P_{n}$, denoted by $P_{n}^{*}$: it is a projector of $N_{\psi}S$  (the orthogonal space to the tangent space) parallel to $H^{\frac{1}{2},+}$. Now we notice that $\langle \varepsilon_{n}, \varphi \rangle =0$ for all $\varphi\in H^{\frac{1}{2},-}$, hence $\varepsilon_{n}\in H^{\frac{1}{2},+}$ so we have
$$\varepsilon_{n}=P_{n}^{*}\varepsilon_{n}^{T}.$$
Thus $\varepsilon_{n}\to 0$ and $\psi_{n}$ is a (PS) sequence for $I$ in $H^{\frac{1}{2}}(\Sigma M)$.
This (PS) sequence then satisfies
\begin{align}
\|\psi_{n}^{+}\|^{2}&\leq \int_{M}u^{2}|\psi^{+}_{n}||\psi_{n}|dv_{g}+o(\|\psi_{n}^{+}\|)\leq C +o(\|\psi_{n}^{+}\|),\notag
\end{align}
and a similar inequality holds for $\psi_{n}^{-}$ which gives us the boundedness in $H^{\frac{1}{2}}(\Sigma M)$. The rest of the proof is classical in order to show that $\psi_{n}\to \psi$ in $H^{\frac{1}{2}}(\Sigma M)$ and $\psi$ satisfies
$$D_g\psi=\tilde{\lambda}_uu^{2}\psi.$$
Finally, since $\tilde{\lambda}_u$ is a minimizer then by a conformal change of the metric $g_u= u^{4}g$ we have that $$\tilde{\lambda}_u=\lambda_{1}(D_{g_u}).$$
\end{proof}

\noindent
Now we prove the following:
\begin{proposition}
It holds:
$$Y_g(M)\lambda^{+}_g(M)\leq \tilde{Y}_g(M) \leq Y_{g_0}(S^{3})\lambda^{+}_{g_0}(S^{3})=\tilde{Y}_{g_0}(S^{3}) .$$
\end{proposition}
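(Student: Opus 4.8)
\emph{Setup.} The plan is to reduce everything to the elementary identity
$$\tilde E(u,\psi)=I(\psi)\int_{M}uL_{g}u\,dv_{g},$$
which is immediate from the definitions, together with Proposition~5.1: for fixed positive smooth $u$, the infimum of $I$ over the admissible spinors $\{\psi:\ I(\psi)>0,\ P^{-}(D_{g}\psi-I(\psi)u^{2}\psi)=0\}$ equals $\tilde\lambda_{u}=\lambda_{1}(D_{g_{u}})$, the first positive Dirac eigenvalue of $g_{u}=u^{4}g$. Since $Y_{g}(M)>0$ we have $\int_{M}uL_{g}u\,dv_{g}>0$ for every $u\neq0$; replacing $u$ by $|u|$ (which increases neither $\int_{M}uL_{g}u\,dv_{g}$ nor changes $I$ and the constraint) followed by a routine approximation reduces the minimization to $u>0$ smooth, so that
$$\tilde Y_{g}(M)=\inf_{u>0}\Big(\lambda_{1}(D_{g_{u}})\int_{M}uL_{g}u\,dv_{g}\Big)=\inf_{\tilde g\in[g]}\tfrac18\,\lambda_{1}(D_{\tilde g})\int_{M}R_{\tilde g}\,dv_{\tilde g},$$
using $\int_{M}uL_{g}u\,dv_{g}=\tfrac18\int_{M}R_{g_{u}}\,dv_{g_{u}}$; this also exhibits the conformal invariance of $\tilde Y_{g}(M)$.

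\emph{Lower bound and the equality on $S^{3}$.} For an admissible pair $(u,\psi)$ with $u>0$, the definition of the Yamabe constant and $\mathrm{Vol}_{g_{u}}(M)=\int_{M}u^{6}\,dv_{g}$ give $\int_{M}uL_{g}u\,dv_{g}\ge Y_{g}(M)\,\mathrm{Vol}_{g_{u}}(M)^{1/3}$, while Proposition~5.1 gives $I(\psi)\ge\tilde\lambda_{u}=\lambda_{1}(D_{g_{u}})$. Hence
$$\tilde E(u,\psi)=I(\psi)\int_{M}uL_{g}u\,dv_{g}\ \ge\ Y_{g}(M)\,\lambda_{1}(D_{g_{u}})\,\mathrm{Vol}_{g_{u}}(M)^{1/3}\ \ge\ Y_{g}(M)\,\lambda^{+}_{g}(M),$$
the last step being exactly the definition of $\lambda^{+}_{g}(M)=\inf_{\tilde g\in[g]}\lambda_{1}(D_{\tilde g})\mathrm{Vol}_{\tilde g}(M)^{1/3}$; taking the infimum yields $Y_{g}(M)\lambda^{+}_{g}(M)\le\tilde Y_{g}(M)$, and applied to $(S^{3},g_{0})$ it gives $Y_{g_{0}}(S^{3})\lambda^{+}_{g_{0}}(S^{3})\le\tilde Y_{g_{0}}(S^{3})$. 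For the reverse inequality on $S^{3}$ I would use the explicit admissible test pair $(u,\psi)=(c,\Psi_{K})$ on the round sphere, $c$ a positive constant and $\Psi_{K}$ a Killing spinor: then $D_{g_{0}}\Psi_{K}=\tfrac32\Psi_{K}=I(\Psi_{K})\,c^{2}\Psi_{K}$, so the $P^{-}$-constraint holds trivially, $I(\Psi_{K})>0$, and $\tilde E(c,\Psi_{K})=I(\Psi_{K})\int_{S^{3}}cL_{g_{0}}c\,dv_{g_{0}}=\tfrac32\,Y_{g_{0}}(S^{3})\mathrm{Vol}(S^{3})^{1/3}=Y_{g_{0}}(S^{3})\lambda^{+}_{g_{0}}(S^{3})$, using that the round metric achieves both $Y_{g_{0}}(S^{3})$ and $\lambda^{+}_{g_{0}}(S^{3})$ and that $\lambda_{1}(D_{g_{0}})=\tfrac32$. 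This proves $\tilde Y_{g_{0}}(S^{3})=Y_{g_{0}}(S^{3})\lambda^{+}_{g_{0}}(S^{3})$.

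\emph{Upper bound: concentrating test pairs.} For the Aubin-type inequality $\tilde Y_{g}(M)\le\tilde Y_{g_{0}}(S^{3})$ I would take $(U_{\infty},\Psi_{\infty})$ on $\R^{3}$ to be the stereographic pull-back of the round-sphere pair above: it solves the limiting system, has finite energy, satisfies $D_{g_{\R^{3}}}\Psi_{\infty}=I_{\R^{3}}(\Psi_{\infty})\,U_{\infty}^{2}\Psi_{\infty}$, and by conformal invariance $\tilde E_{\R^{3}}(U_{\infty},\Psi_{\infty})=Y_{g_{0}}(S^{3})\lambda^{+}_{g_{0}}(S^{3})$. Fix $p\in M$ and, with the rescalings $\rho_{\varepsilon}(\cdot)=\exp_{p}(\varepsilon\,\cdot)$, $\sigma_{\varepsilon}=\rho_{\varepsilon}^{-1}$ and a cut-off $\beta$ as in Section~4, set $u_{\varepsilon}=\varepsilon^{-1/2}\beta\,\sigma_{\varepsilon}^{*}(U_{\infty})$ and $\psi_{\varepsilon}=\varepsilon^{-1}\beta\,\sigma_{\varepsilon}^{*}(\Psi_{\infty})$. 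Since the rescaled metrics converge to $g_{\R^{3}}$ in $C^{\infty}_{\mathrm{loc}}$ and $(U_{\infty},\Psi_{\infty})$ decays, the same computations as in Section~4 show that $\int_{M}u_{\varepsilon}L_{g}u_{\varepsilon}\,dv_{g}$, $\int_{M}\langle D_{g}\psi_{\varepsilon},\psi_{\varepsilon}\rangle dv_{g}$ and $\int_{M}u_{\varepsilon}^{2}|\psi_{\varepsilon}|^{2}\,dv_{g}$ converge to the corresponding $\R^{3}$-integrals, so $\tilde E(u_{\varepsilon},\psi_{\varepsilon})\to Y_{g_{0}}(S^{3})\lambda^{+}_{g_{0}}(S^{3})$.

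\emph{Upper bound: correcting admissibility (the main obstacle).} The pair $(u_{\varepsilon},\psi_{\varepsilon})$ is not yet admissible, and this is the crux. Because $(U_{\infty},\Psi_{\infty})$ solves $D\Psi=I\,U^{2}\Psi$, the defect $D_{g}\psi_{\varepsilon}-I(\psi_{\varepsilon})u_{\varepsilon}^{2}\psi_{\varepsilon}$ consists only of cut-off commutators, of terms measuring the deviation of $g$ from flat, and of the error $I(\psi_{\varepsilon})-I_{\R^{3}}(\Psi_{\infty})$, all of which vanish in the limit; hence $\|P^{-}(D_{g}\psi_{\varepsilon}-I(\psi_{\varepsilon})u_{\varepsilon}^{2}\psi_{\varepsilon})\|_{H^{1/2}}=o(1)$. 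By Proposition~5.1 the set $S_{u_{\varepsilon}}=\{\psi:\ P^{-}(D_{g}\psi-I(\psi)u_{\varepsilon}^{2}\psi)=0\}$ is a manifold on which the linearization $DG$ is, in the normal direction, negative definite with a bound independent of $\varepsilon$ (once $\int_{M}u_{\varepsilon}^{2}|\psi_{\varepsilon}|^{2}dv_{g}$ is normalized). Projecting $\psi_{\varepsilon}$ onto $S_{u_{\varepsilon}}$ thus costs only $O(\|P^{-}(\cdots)\|_{H^{1/2}})=o(1)$ in $H^{1/2}$, producing admissible pairs $(u_{\varepsilon},\tilde\psi_{\varepsilon})$ with $I(\tilde\psi_{\varepsilon})\to I_{\R^{3}}(\Psi_{\infty})>0$ and $\tilde E(u_{\varepsilon},\tilde\psi_{\varepsilon})=\tilde E(u_{\varepsilon},\psi_{\varepsilon})+o(1)$; letting $\varepsilon\to0$ gives $\tilde Y_{g}(M)\le Y_{g_{0}}(S^{3})\lambda^{+}_{g_{0}}(S^{3})$, completing the chain. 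I expect the delicate parts to be precisely this admissibility correction — combining the Aubin test-function expansion with the quantitative manifold structure of Proposition~5.1 without spoiling the leading order — together with the (routine) reduction to positive smooth $u$ in the definition of $\tilde Y_{g}(M)$ and the error bookkeeping for the concentrating pairs.
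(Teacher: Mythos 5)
Your proposal is correct and follows essentially the same route as the paper: the lower bound via $\tilde E(u,\psi)=I(\psi)\int_M uL_gu\,dv_g$ combined with the Sobolev/Yamabe inequality and Proposition~5.1, and the upper bound via concentrating rescaled bubbles (your stereographic pull-back of the constant--Killing-spinor pair is exactly the paper's explicit $(U_0,\Psi_0)$) followed by an $o(1)$ correction in $H^{\frac{1}{2},-}$ to restore the $P^-$-constraint, which the paper carries out by the implicit function theorem for $T(h)=A_\lambda$. The only cosmetic difference is that you verify the equality on $S^3$ directly with the admissible pair $(c,\Psi_K)$, whereas the paper obtains it by applying both inequalities to the round sphere; the two are equivalent.
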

\begin{proof}
We first notice that using the Sobolev inequality and Proposition (5.1), we have for any non-trivial $(u,\psi)\in H^{1}(M) \times H^{\frac{1}{2}}(\Sigma M)$:
\begin{align}
\tilde{E}(u,\psi)&\geq Y_g(M)\left(\int_{M}u^{6}dv_{g}\right)^{\frac{1}{3}}\frac{\displaystyle\int_{M}\langle D_g \psi,\psi\rangle dv_{g}}{\displaystyle\int_{M}|u|^{2}|\psi|^{2}dv_{g}}\notag\\
&\geq Y_g(M)\lambda_{1}(g_{u})Vol(g_{u})^{\frac{1}{3}}\notag \\
&\geq Y_g(M)\lambda_g^{+}(M),
\end{align}
which proves the first inequality of the claim. We also recall that we set $g_u=u^4g$. Now for the second inequality, we consider a spinor $\overline{\Psi}_{0}\in (\Sigma\R^{3})$ such that $|\overline{\Psi}_{0}|=\frac{1}{\sqrt{2}}$ and we define the standard spinor
$$\Psi_{0}(x)=\left(\frac{2}{1+|x|^{2}}\right)^{\frac{3}{2}}(1-x)\cdot \overline{\Psi}_{0},$$
and bubble
$$U_{0}=\left(\frac{2}{1+|x|^{2}}\right)^{\frac{1}{2}}.$$
Then an easy computation shows that $(U_{0},\Psi_{0})$ is a critical point for $E_{\R^{3}}$ and
$$E(U_{0},\Psi_{0})=\frac{1}{2}Y_{g_{0}}(S^{3})\lambda_{g_{0}}^{+}(S^{3}).$$
Now, we fix $x_{0}\in M$ and let $\lambda>0$ be a parameter that we will be tending to zero. We define as in (\ref{e5})) and (\ref{e6}), $\rho_{\lambda}(x)=exp_{x_{0}}(\lambda x)$,  and we consider $\beta$ a cut-off function supported in $B_{2}(x_{0})$ and equals $1$ on $B_{1}(x_{0})$. We can therefore define the functions
$$\left\{\begin{array}{ll}
u_{\lambda}=\lambda^{-\frac{1}{2}}\beta\sigma_{\lambda}^{*}(U_{0}),\\
\\
\psi_{\lambda}=\lambda^{-1}\beta\sigma_{\lambda}^{*}(\Psi_{0}),
\end{array}
\right.
$$
where $\sigma_{\lambda}=\rho_{\lambda}^{-1}$. Similar computations as in Lemma (4.5) show that
$$dE(u_{\lambda},\psi_{\lambda})\to 0, \qquad \lambda \to 0 .$$
Also we have
$$\int_{M}u_{\lambda}L_{g}u_{\lambda}dv_{g}=\int_{\R^{3}}|\nabla U_{0}|^{2}dv_{g_{\R^{3}}}+o(1),$$
$$\int_{M}\langle D_g\psi_{\lambda},\psi_{\lambda}\rangle dv_{g}=\int_{\R^{3}}\langle D_{g_{\R^3}}\Psi_{0},\Psi_{0}\rangle dv_{g_{\R^{3}}}+o(1),$$
$$\int_{M}|u_{\lambda}|^{2}|\psi_{\lambda}|^{2}dv_{g}=\int_{\R^{3}}|U_{0}|^{2}|\Psi_{0}|^{2}dv_{g_{\R^{3}}}+o(1).$$
Now, for these test functions we might have the possibility that
$$P^{-}(D_g\psi_{\lambda}-I(\psi_{\lambda})|u_{\lambda}|^{2}\psi_{\lambda})\not =0,$$
so we want to perturb $\psi_{\lambda}$ so that the previous inequality is satisfied. Therefore we have to show that there exists $h\in H^{\frac{1}{2},-}$ so that
$$P^{-}(D_g(\psi_{\lambda}+h)-I(\psi_{\lambda}+h)|u_{\lambda}|^{2}(\psi_{\lambda}+h))=0.$$
This is equivalent to solving
$$D_g h-P^{-}[I(\psi_{\lambda})|u_{\lambda}|^{2}h]+B(h)=A_{\lambda},$$
where
$$A_{\lambda}=P^{-}(D_g\psi_{\lambda}-I(\psi_{\lambda})|u_{\lambda}|^{2}\psi_{\lambda}),$$
and
$$B(h)=P^{-}\left([I(\psi_{\lambda})-I(\psi_{\lambda}+h)]|u_{\lambda}|^{2}(\psi_{\lambda}+h)\right).$$
So we consider the operator $T:H^{\frac{1}{2},-}\to H^{\frac{1}{2},-}$ defined by
$$T(h)=D_gh-I(\psi_{\lambda})P^{-}(|u_{\lambda}|^{2}h)+B(h).$$
Then
$$dT(0)\varphi=D_g\varphi-I(\psi_{\lambda})P^{-}(|u_{\lambda}|^{2}\varphi)-\langle dI(\psi_{\lambda}),\varphi\rangle P^{-}(|u_{\lambda}|^{2}\psi_{\lambda}).$$
The operator
$$\varphi\mapsto D\varphi-I(\psi_{\lambda})P^{-}(|u_{\lambda}|^{2}\varphi)$$
is negative definite on $H^{\frac{1}{2},-}$, hence it is invertible for all $\lambda>0$, and for $\lambda$ small enough we have that
$$\langle dI(\psi_{\lambda}),\varphi\rangle|u_{\lambda}|^{2}\psi_{\lambda}\to 0,$$
hence $dT(0)$ is invertible for $\lambda$ small enough. Since $A_{\lambda}\to 0$ as $\lambda \to 0$, we have by the implicit function theorem, the existence of $h_{\lambda}\in H^{\frac{1}{2},-}$ such that $T(h_{\lambda})=A_{\lambda}$, moreover $h_{\lambda}\to 0$ as $\lambda\to 0$. Now we check that
$$\int_{M}\langle D_g(\psi_{\lambda}+h_{\lambda}),(\psi_{\lambda}+h_{\lambda}\rangle dv_{g}=\int_{\R^3}\langle D_{g_{\R^3}}\Psi_{0},\Psi_{0}\rangle dv_{g_{\R^{3}}}+o(1),$$
and
$$\int_{M}|u_{\lambda}|^{2}|\psi_{\lambda}+h_{\lambda}|^{2}dv_{g}=\int_{\R^3}|U_{0}|^{2}|\Psi_{0}|^{2}dv_{g_{\R^{3}}}+o(1).$$
Hence, we have
$$\tilde{E}(u_{\lambda},\psi_{\lambda}+h_{\lambda})=Y_{g_{0}}(S^{3})\lambda_{g_{0}}^{+}(S^{3})+o(1),$$
therefore we can conclude that
$$\tilde{Y}_g(M) \leq \tilde{Y}_{g_0}(S^{3}) .$$
\end{proof}

\noindent
Now we consider the original functional $\tilde{E}(u,\psi)$ end the minimization problem giving $\tilde{Y}_g(M)$.
%$$\tilde{Y}=$$\tilde{E}(u,\psi)=\frac{\left(\int_{M}uL_{g}udv_{g}\right)\left(\int_{M}\langle D\psi,\psi\rangle dv_{g}\right)}{\int_{M}|u|^{2}|\psi|^{2}dv_{g}}$$ We define now the minimization problem $$\tilde{Y}=\inf\{\tilde{E}(u,\psi);I(\psi_{1})> 0; \int_{M}\langle D\psi-I_{1}(\psi)u^{2}\psi,\varphi\rangle dv_{g}=0;\forall \varphi\in H^{-}\}$$
\noindent
We complete the proof of Theorem (\ref{second}) by proving the following
\begin{proposition}
If
$$\tilde{Y}_g(M)<\tilde{Y}_{g_{0}}(S^{3}),$$
then the problem (\ref{el}) has a non-trivial solution with $u>0$.
\end{proposition}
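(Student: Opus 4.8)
The plan is to realize $\tilde{Y}_g(M)$ as the energy level of a Palais--Smale sequence for $E$, to apply the bubbling decomposition of Theorem~\ref{first}, and to use the strict inequality together with the energy lower bound $E_{\R^3}(U,\Psi)\ge c_3$ of Proposition~4.11 to exclude all bubbles; the resulting strong limit is then a minimizer, which one finally makes positive by the maximum principle.

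First I would produce the Palais--Smale sequence. Let $(u_n,\psi_n)$ be a minimizing sequence for $\tilde{Y}_g(M)$, so that $I(\psi_n)>0$, $P^-\bigl(D_g\psi_n-I(\psi_n)u_n^2\psi_n\bigr)=0$, and $\tilde{E}(u_n,\psi_n)\to\tilde{Y}_g(M)$. Since $\tilde{E}$ and the constraint set are invariant under the two--parameter rescaling $(u,\psi)\mapsto(su,t\psi)$, $s,t>0$, I may normalize so that $\int_M u_n^2|\psi_n|^2\,dv_g=1$ and $\int_M u_nL_gu_n\,dv_g=\int_M\langle D_g\psi_n,\psi_n\rangle\,dv_g=:\mu_n$, whence $\tilde{E}(u_n,\psi_n)=\mu_n^2\to\tilde{Y}_g(M)$ and $\mu_n\to\mu:=\sqrt{\tilde{Y}_g(M)}>0$ by Proposition~5.2. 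Arguing as in the proof of Proposition~5.1, the set cut out by the constraint $P^-(D_g\psi-I(\psi)u^2\psi)=0$ together with $\int_M u^2|\psi|^2\,dv_g=1$ is a submanifold of $\mathcal{H}$, because the relevant differential is negative definite on $H^{\frac{1}{2},-}$; applying Ekeland's variational principle there and then controlling the Lagrange multipliers by the projector argument of Proposition~5.1 (the multiplier of the $P^-$--constraint drops out in the limit, that of the normalization is exactly $\mu_n$), I may assume $L_gu_n-\mu_n u_n|\psi_n|^2\to0$ in $H^{-1}(M)$ and $D_g\psi_n-\mu_n u_n^2\psi_n\to0$ in $H^{-\frac{1}{2}}(\Sigma M)$. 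Rescaling once more by the bounded, uniformly positive factor $\sqrt{\mu_n}$, i.e. setting $\hat{u}_n=\sqrt{\mu_n}\,u_n$ and $\hat{\psi}_n=\sqrt{\mu_n}\,\psi_n$, turns this into $dE(\hat{u}_n,\hat{\psi}_n)\to0$ in $H^{-1}(M)\times H^{-\frac{1}{2}}(\Sigma M)$, while $E(\hat{u}_n,\hat{\psi}_n)=\tfrac12\mu_n^2\to\tfrac12\tilde{Y}_g(M)$. Thus $(\hat{u}_n,\hat{\psi}_n)$ is a Palais--Smale sequence for $E$ at level $c=\tfrac12\tilde{Y}_g(M)$.

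Next I would exclude concentration. By Theorem~\ref{first}, up to a subsequence there are a solution $(u_\infty,\psi_\infty)$ of $(\ref{el})$ and nontrivial $\R^3$--solutions $(U_\infty^k,\Psi_\infty^k)$, $k=1,\dots,m$, with $\hat{u}_n=u_\infty+\sum_{k=1}^m v_n^k+o(1)$ in $H^1(M)$, $\hat{\psi}_n=\psi_\infty+\sum_{k=1}^m\phi_n^k+o(1)$ in $H^{\frac{1}{2}}(\Sigma M)$, and
$$\tfrac12\tilde{Y}_g(M)=E(u_\infty,\psi_\infty)+\sum_{k=1}^m E_{\R^3}(U_\infty^k,\Psi_\infty^k).$$
On any solution $E$ equals $\tfrac12\int_M u^2|\psi|^2\,dv_g\ge0$, so $E(u_\infty,\psi_\infty)\ge0$, while Proposition~4.11 gives $E_{\R^3}(U_\infty^k,\Psi_\infty^k)\ge c_3$ for each $k$; since the hypothesis $\tilde{Y}_g(M)<\tilde{Y}_{g_0}(S^3)$ yields $\tfrac12\tilde{Y}_g(M)<c_3$, the case $m\ge1$ would give $\tfrac12\tilde{Y}_g(M)\ge c_3>\tfrac12\tilde{Y}_g(M)$, a contradiction. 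Hence $m=0$, so $\hat{u}_n\to u_\infty$ in $H^1(M)$ and $\hat{\psi}_n\to\psi_\infty$ in $H^{\frac{1}{2}}(\Sigma M)$; then $(u_\infty,\psi_\infty)$ solves $(\ref{el})$ and is nontrivial, because $\int_M u_\infty^2|\psi_\infty|^2\,dv_g=\lim_n\mu_n^2=\tilde{Y}_g(M)>0$ (so neither $u_\infty$ nor $\psi_\infty$ vanishes identically).

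It remains to arrange $u_\infty>0$. Since $\int_M|\nabla|u_\infty||^2\,dv_g=\int_M|\nabla u_\infty|^2\,dv_g$ and the constraints only involve $u_\infty^2$, the pair $(|u_\infty|,\psi_\infty)$ is again a minimizer for $\tilde{Y}_g(M)$, hence, by the same Euler--Lagrange analysis as above and with the same (trivial) rescaling factors, it too solves $(\ref{el})$; in particular $L_g|u_\infty|=|u_\infty|\,|\psi_\infty|^2$. By the regularity result of Section~3, $\psi_\infty$ is continuous, so with $K:=\|\tfrac18 R_g-|\psi_\infty|^2\|_{L^\infty(M)}$ one has $-\Delta_g|u_\infty|+K|u_\infty|=\bigl(|\psi_\infty|^2-\tfrac18R_g+K\bigr)|u_\infty|\ge0$ and $|u_\infty|\ge0$, and $|u_\infty|\not\equiv0$ (otherwise $D_g\psi_\infty=0$, forcing $\psi_\infty=0$ since $\ker D_g=\{0\}$ under the positive Yamabe assumption, contradicting nontriviality). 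The strong maximum principle then gives $|u_\infty|>0$ on $M$, so $u_\infty$ has constant sign, and replacing it by $|u_\infty|$ yields a nontrivial solution of $(\ref{el})$ with $u>0$.

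The step I expect to be the main obstacle is the first one: upgrading the scale--invariant constrained minimization to a genuine Palais--Smale sequence for $E$. This requires the constraint--manifold and Ekeland machinery of Proposition~5.1, now carried out with variations in both $u$ and $\psi$ and with the extra normalization constraint, together with careful bookkeeping of the two successive rescalings needed to pin down the Palais--Smale level at $\tfrac12\tilde{Y}_g(M)$. Once this reduction is achieved, the exclusion of bubbles is immediate from Theorem~\ref{first} and Proposition~4.11, and the positivity is the standard $|u|$--plus--maximum--principle argument.
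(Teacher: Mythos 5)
Your proposal is correct and follows essentially the same route as the paper: a constrained (Nehari-type) minimization at level $\tfrac12\tilde{Y}_g(M)$, Ekeland's principle plus the projector argument of Proposition 5.1 to upgrade the constrained minimizing sequence to a genuine Palais--Smale sequence for $E$, and Theorem \ref{first} together with the lower bound $E_{\R^3}(U,\Psi)\geq c_3$ to exclude bubbling under the strict inequality --- indeed your normalization $\int_M u_n^2|\psi_n|^2\,dv_g=1$ followed by the rescaling by $\sqrt{\mu_n}$ lands exactly on the paper's Nehari manifold $N$. The only substantive addition is your final $|u_\infty|$-plus-strong-maximum-principle step, which the paper's proof leaves implicit but which is needed to justify the claim $u>0$ in the statement.
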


\begin{proof}
We introduce here a generalized Nehari manifold. We consider the functional $E$ and we notice first that there exist $t>0$ and $s>0$ small such that if $\|u\|=t$ and $\|\psi\|=s$ then $E(u,\psi)\geq c>0$. Indeed, we have that
$$2E(u,\psi)=\|u\|^{2}+\int_{M}\langle D_g\psi,\psi\rangle dv_{g}-\int_{M}|u|^{2}|\psi|^{2} dv_{g} .$$
Now using the fact that $\int_{M}\langle D\psi-I(\psi)u^{2}\psi,\varphi\rangle dv_{g}=0$, by taking $\varphi=\psi^{-}$, we have
\begin{align}
\|\psi^{-}\|^{2}&\leq I(\psi)\int_{M}|u|^{2}|\psi||\psi^{-}|dv_{g}\leq CI(\psi)\|u\|_{L^6}^{2}\|\psi\|_{L^3}\|\psi^{-}\|_{L^3},
\end{align}
hence
$$\|\psi^{-}\|\leq CI(\psi)\|u\|_{L^6}^{2}\|\psi\|_{L^3} .$$
Therefore,
$$\|\psi\|^{2}=\|\psi^{+}\|^{2}+\|\psi^{-}\|^{2}\leq \|\psi^{+}\|^{2}+CI(\psi)\|u\|_{L^6}^{4}\|\psi\|_{L^3}^{2}.$$
Now
\begin{align}
2E(u,\psi)&\geq t^{2}+s^{2}-C_{1}t^{4}s^{4}-C_{2}t^{2}s^{2}\notag\\
&\geq t^{2}+s^{2}-C(t^{8}+s^{8}+t^{4}+s^{4})\notag\\
&\geq t^{2}-C(t^{8}+t^{4})+s^{2}-C(s^{8}+s^{4}) .\notag
\end{align}
Hence for $t$ and $s$ small enough, we have that $E(u,\psi)\geq c> 0$.\\
Moreover, if we fix $u$ and $\psi$ such that $\int_{M}|u|^{2}|\psi|^{2}dv_{g}=1$, we have that
$$E(ru,r\psi)\leq r^{2}\|u\|^{2}-r^{2}\|\psi^{-}\|^{2}-r^{4}\to -\infty, \text{ when } r\to \infty.$$
This tells us that $E$ has the geometry of mountain pass, so we consider the following, min-max problem
$$m=\inf\left\{\max_{t\geq 0, s\geq 0}E(tu,s\psi);I(\psi)>0;  P^{-}\left(D_g \psi-I(\psi)u^{2}\psi\right)=0 \right\}.$$
So, if $E$ satisfies (PS) at the level $m$ and we disregard the orthogonality condition, then $m$ is a critical value. An easy computation shows that,
$$\max_{t\geq 0, s\geq 0}E(tu,s\psi)=\frac{1}{2}\tilde{E}(u,\psi)$$
Therefore $2m=\tilde{Y}_g(M)$. Now we consider the Nehari manifold
$$N=\left\{\begin{array}{ll}(u,\psi)\in H^{1}(M)\times H^{\frac{1}{2}}(\Sigma M);\int_{M}uL_{g}u dv_{g}=\int_{M}\langle D_g\psi,\psi\rangle dv_{g}=\int_{M}|u|^{2}|\psi|^{2}dv_{g} \not=0;\\
P^{-}\left(D_g \psi-I(\psi)u^{2}\psi\right)=0
\end{array}\right\} $$
We first claim that $N$ is indeed a manifold. We consider the operator
$$G:H^{1}(M)\times H^{\frac{1}{2}}(\Sigma M)\to \R\times \R \times H^{\frac{1}{2},-},$$
defined by
$$G(u,\psi)=\left[\int_{M}uL_{g}u-|u|^{2}|\psi|^{2}dv_{g},\int_{M}\langle D_g\psi-|u|^{2}\psi,\psi\rangle dv_{g},P^{-}(D_g\psi-I(\psi)|u|^{2}\psi)\right].$$
Clearly, we have that $N=G^{-1}(0)$ thus, if $DG(u,\psi)$ is onto for all $(u,\psi) \in N$ then $N$ is a manifold. So we fix $(u_{0},\psi_{0})\in N$ and we will show invertibility of $DG(u_{0},\psi_{0})$ restricted to some special subspace. Indeed, we will use the following parametrization $$(tu_{0},s\psi_{0}+h)=[t,s,h], \qquad h\in H^{\frac{1}{2},-},$$
and we will express $DG(u_{0},\psi_{0})$ in the basis of $\R\times \R\times H^{\frac{1}{2},-}$. Since $\int_{M}|u_{0}|^{2}|\psi_{0}|^{2}dv_{g}\not=0$, we will assume for the sake of simplicity that $\int_{M}|u_{0}|^{2}|\psi_{0}|^{2}dv_{g}=1$. We have then
$$DG(u_{0},\psi_{0})[1,0,0]=[0,-2,2P^{-}(D_g\psi_{0})],$$
$$DG(u_{0},\psi_{0})[0,1,0]=[-2,0,0],$$
and
$$DG(u_{0},\psi_{0})[0,0,h]=[2\langle D_g\psi_{0}, h \rangle,0,D_gh-|u_{0}|^{2}h].$$
Now we notice that the operator $K:h\to D_gh-|u_{0}|^{2}h$ is negative definite on $H^{\frac{1}{2},-}$. Indeed,
$$\langle Kh,h\rangle=-\|h\|^{2}-\int_{M}|u_{0}|^{2}h^{2}dv_{g}.$$
Therefore, $K$ is invertible. Now, given $[a,b,c]\in \R\times \R \times H^{\frac{1}{2},-}$, we want to find $[x_{1},x_{2},w]$ so that
$DG(u_{0},\psi_{0},w)=[a,b,c]$. First, we have
$$x_{1}=-\frac{b}{2},$$
and
$$2x_{1}P^{-}(D_g\psi_{0})+Kw=c.$$
Hence,
$$w=K^{-1}(c+bP^{-}(D_g\psi_{0})).$$
Finally, we have that
$$-2x_{2}+2\langle D_g\psi_{0},w\rangle =a,$$
thus
$$x_{2}=-\frac{a}{2}+\langle D_g\psi_{0},K^{-1}(c+bP^{-}(D_g\psi_{0}))\rangle.$$
This proves that $DG(u_{0},\psi_{0})$ is onto and hence $N$ is a manifold. Moreover, $DG(u_{0},\psi_{0})$ has a left inverse $A(u_{0},\psi_{0}):\R\times \R \times H^{\frac{1}{2},-}\to \R u_{0} \oplus \R \psi_{0}\oplus H^{\frac{1}{2},-}$ and $$\|A(u_{0},\psi_{0})\|_{op}\leq C(\|u_{0}\|,\|\psi_{0}\|).$$
Using Ekeland's principle now, we have the existence of a minimizing (PS) sequence for $E$ restricted to $N$: we want to show that this is indeed a (PS) sequence for $E$ also in $H^{1}(M)\times H^{\frac{1}{2}}(\Sigma M)$. Let us call such a sequence $(u_{n},\psi_{n})\in N$. Then similarly as in the proof of Proposition (5.1), we set $DE(u_{n},\psi_{n})=\varepsilon_{n}$. We clearly have that $\varepsilon_{n}^{T}\to 0$, since it is the tangential part of the (PS) sequence and it is a (PS) sequence in $N$. Notice now that
$$P_{n}=A(u_{n},\psi_{n})\circ DG(u_{n},\psi_{n})$$
is a projector on $\R u_{0} \oplus \R \psi_{0}\oplus H^{\frac{1}{2},-}$ parallel to $T_{(u_{0},\psi_{0})}N$. Now since
$$E(u_{n}\psi_{n})=\frac{1}{2}\int_{M}|u_{n}|^{2}|\psi_{n}|^{2}dv_{g}\to m,$$
we have that $\|u_{n}\|^{2}\leq C$. Also, we have
$$-\|\psi^{-}_{n}\|^{2}=\int_{M}|u_{n}|^{2}\langle \psi_{n},\psi_{n}^{-}\rangle dv_{g}.$$
Thus
\begin{align}
\|\psi^{-}_{n}\|^{2}&\leq \int_{M}|u_{n}|^{2}|\psi_{n}||\psi_{n}^{-}| dv_{g}\notag\\
&\leq \left(\int_{M}|u_{n}|^{2}|\psi_{n}|^{2}dv_{g}\right)^{\frac{1}{2}}\left(\int_{M}|u_{n}|^{2}|\psi_{n}^{-}|^{2}dv_{g}\right)^{\frac{1}{2}}\notag\\
&\leq C_{1}\|u_{n}\|_{L^6}\|\psi_{n}^{-}\|_{L^3}.\notag
\end{align}
Therefore
$$\|\psi^{-}_{n}\|\leq C,$$
but we have that
$$\|\psi_{n}^{+}\|^{2}-\|\psi_{n}^{-}\|^{2}=\int_{M}|u_{n}|^{2}|\psi_{n}|^{2}dv_{g},$$
hence
$$\|\psi_{n}^{+}\|^{2}\leq C.$$
Therefore, we have that $A(u_{n},\psi_{n})$ is uniformly bounded and so is $P_{n}$. We consider now the operator $P_{n}^{*}$, the adjoint of $P_{n}$. Then $P_{n}^{*}$ is also a projector on $(\R u_{0} \oplus \R \psi_{0}\oplus H^{\frac{1}{2},-})^{\perp}$ parallel to $\mathcal{N}_{(u_{n},\psi_{n})}N$, the normal space of $N$ at the point $(u_{n},\psi_{n})$. We also notice that
$$\varepsilon_{n}\in (\R u_{0} \oplus \R \psi_{0}\oplus H^{\frac{1}{2},-})^{\perp},$$
hence, $\varepsilon_{n}=P_{n}^{*}\varepsilon_{n}^{T}$ and so $(u_{n},\psi_{n})$ is indeed a (PS) sequence for $E$. Therefore, this (PS) sequence is at the energy level $\frac{1}{2}\tilde{Y}_g(M)$: from Theorem (\ref{first}), if $\tilde{Y}_g(M)<\tilde{Y}_{g_{0}}(S^{3})$, then this (PS) sequence converges and thus we have a solution to our problem.
\end{proof}

%%%%%%%%%%%%%%%%%%%%%%%%%%%%%%%%%%%%%%%%%%%%%%%%%%%%%%%%%%%%%%%%%%%%%%%%%%%%%%%%%%%%%%%%%%%%%%%%%%%%%%%%%%%%%%%%%%%%%%%%%%%%
%%%%%%%%%%%%%%%%%%%%%%%%%%%%%%%%%%%%%%%%%%%%%%%%%%%%%%%%%%%%%%%%%%%%%%%%%%%%%%%%%%%%%%%%%%%%%%%%%%%%%%%%%%%%%%%%%%%%%%%%%%%%
%%%%%%%%%%%%%%%%%%%%%%%%%%%%%%%%%%%%%%%%%%%%%%%%%%%%%%%%%%%%%%%%%%%%%%%%%%%%%%%%%%%%%%%%%%%%%%%%%%%%%%%%%%%%%%%%%%%%%%%%%%%%

\section{Existence of infinitely many solutions in symmetric manifolds}

\noindent
Here we will consider a three-dimensional closed manifold $(M,g)$ with an isometric group action $G$ acting on $M$, such that the orbits of $G$ have infinite cardinality. As an example, we can consider the standard sphere $S^{3}\subseteq\R^4$ with the action introduced by Ding \cite{Ding}, that is $G=O(2)\times O(2)$. Such symmetries where exploited an improved in other settings such as in \cite{M2,MV2,MVG}.  We will show the following
\begin{theorem}\label{third}
Given a manifold $M$ as described above, then $(\ref{el})$ has two infinite families of solutions.
\end{theorem}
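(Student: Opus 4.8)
The plan is to recover compactness by restricting $E$ to the subspace of $G$-invariant configurations, where the bubbling analysis of Theorem \ref{first} rules out any loss of compactness, and then to run a $\mathbb{Z}_2$-equivariant minimax adapted to the strongly indefinite geometry. Concretely, I would introduce $\mathcal{H}^G\subset\mathcal{H}$, the closed subspace of pairs $(u,\psi)$ invariant under $G$ (the action on $H^{\frac{1}{2}}(\Sigma M)$ being the one induced by the isometric action, which we may assume lifts to the spin structure, as it does for $S^{3}$ with the Ding action \cite{Ding}). Since $E$ is $G$-invariant, Palais' principle of symmetric criticality shows that every critical point of $E|_{\mathcal{H}^G}$ is a critical point of $E$ on $\mathcal{H}$, hence a solution of (\ref{el}); so it suffices to produce infinitely many critical points of $E$ on $\mathcal{H}^G$. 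Throughout I keep the running hypothesis $Y_g(M)>0$, which is what makes Theorem \ref{first} and Section 5 available.

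The key point is that $E|_{\mathcal{H}^G}$ satisfies the (PS) condition at \emph{every} level. Indeed, given a (PS) sequence $(u_n,\psi_n)\subset\mathcal{H}^G$, the complement $(\mathcal{H}^G)^{\perp}$ is $E$-invariant, so the full differential still tends to zero in $\mathcal{H}^{*}$ and Theorem \ref{first} applies, yielding a solution $(u_\infty,\psi_\infty)$ together with $m$ bubbles concentrating at points $x^{1},\dots,x^{m}\in M$. But $|u_n|^{2}|\psi_n|^{2}\,dv_g$ is a $G$-invariant measure and $G$ acts by isometries, so the concentration set — the set $\Sigma_3$ of Section 4 — is $G$-invariant; a finite $G$-invariant set all of whose orbits are infinite must be empty, hence $m=0$ and $(u_n,\psi_n)\to(u_\infty,\psi_\infty)$ strongly.

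With compactness in hand, I would carry out an equivariant minimax on $\mathcal{H}^G$. The functional retains the two commuting $\mathbb{Z}_2$-symmetries $(u,\psi)\mapsto(-u,\psi)$ and $(u,\psi)\mapsto(u,-\psi)$, together with the mountain-pass/linking geometry of Section 5 (the bounds $E\ge c>0$ on small product spheres and $E(ru,r\psi)\to-\infty$ are symmetry-preserving). Following the homological approach to strongly indefinite functionals of \cite{M,MV,MV1} — or, equivalently, transplanting a symmetric fountain-type theorem to the generalized Nehari manifold $N\cap\mathcal{H}^G$ of Proposition 5.5, which is invariant under $\mathbb{Z}_2\times\mathbb{Z}_2$, carries a free $\mathbb{Z}_2$-action, and on which $E=\frac{1}{2}\int_M|u|^{2}|\psi|^{2}\,dv_g>0$ — one builds, by a Krasnoselskii genus ($\mathbb{Z}_2$-cohomological index) argument, an increasing and unbounded sequence of critical values. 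Running the construction with respect to each of the two generating symmetries yields two sequences $c_k^{(1)},c_k^{(2)}\to+\infty$; by the compactness just established each value is attained, which gives the two infinite families of solutions of (\ref{el}).

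The hard part will be the strong indefiniteness: $E$ has an infinite-dimensional negative subspace (the negative eigenspinors of $D_g$), so the classical symmetric mountain-pass theorem does not apply directly. The delicate technical points are constructing an equivariant pseudo-gradient flow that preserves $\mathcal{H}^G$ and is admissible for an infinite-dimensional linking, and verifying the intersection inequalities that force the minimax values to be finite and to diverge to $+\infty$. The reduction to the Nehari manifold $N$ (Proposition 5.5) is exactly the device that converts the indefinite problem into one of definite type in the effective directions; what remains is to make the genus argument compatible with this manifold constraint and with the two symmetries. Compactness — usually the obstruction in critical-exponent problems — is here provided for free by the symmetric hypothesis, which is the whole reason the argument goes through.
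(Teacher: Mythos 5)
Your compactness step is exactly the paper's: restrict to the $G$-invariant subspace, note that the concentration set of a (PS) sequence is $G$-invariant and finite while every orbit is infinite, hence empty, so (PS) holds at every level. The overall minimax framework (generalized Nehari manifold plus a genus argument) also matches Section 6. But there are two places where your plan falls short of a proof. First, the step you defer as a ``delicate technical point'' — verifying that the minimax classes are nonempty — is in fact the heart of the argument, and it is not a flow/linking issue at all once one works on $N_G$, where $E=\frac{1}{2}\int_M|u|^2|\psi|^2\,dv_g$ is \emph{bounded below and positive}; the strong indefiniteness has already been absorbed into the constraint $P^{-}\bigl(D_g\psi-I(\psi)u^{2}\psi\bigr)=0$. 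What you must actually produce is a compact symmetric subset of $N_G$ of arbitrarily large genus. The paper does this by an explicit equivariant map $\mathcal{T}:\{-1,1\}\times S^{+}\to N_G$, $S^{+}$ the unit sphere of $H^{\frac12,+}$, exploiting the observation that for $u$ constant and $\psi\in H^{\frac12,+}$ the constraint is satisfied with no correction term $\varphi$ (since $P^{-}\psi=0$ and $P^{-}D_g\psi=0$), so the scaling parameters $t_{(1,\psi)},s_{(1,\psi)}$ can be written down in closed form; then $\gamma(S^{+})=+\infty$ pushes forward to $\gamma(N_G)=+\infty$. Without some such construction your levels $c_k$ could be infima over empty classes. (Also, you do not need, and will not easily get, $c_k\to+\infty$; the standard Rabinowitz argument only needs $c_k<\infty$, (PS), and the multiplicity statement when levels coincide.)

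Second, your device for the \emph{second} infinite family does not work as stated. Running the same genus minimax with respect to the other $\mathbb{Z}_2$-symmetry $(u,\psi)\mapsto(u,-\psi)$ detects critical points of the same restricted functional $E|_{N_G}$; there is no mechanism forcing the two sequences of critical points to be distinct, so you have produced one infinite family twice. The paper instead uses a genuinely different symmetry and a different index: the free $S^{1}$-action $\theta\cdot(u,\psi)=(u,e^{i2\pi\theta}\psi)$ on the spinorial part, which preserves $N_G$ and $E$, together with the Fadell--Rabinowitz cohomological index $i_{S^1}$; the restriction $\mathcal{G}(\psi)=\mathcal{T}(1,\psi)$ is $S^{1}$-equivariant, giving $i_{S^1}(N_G)=+\infty$ and hence a second family of critical $S^{1}$-orbits. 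If you want two families you should either adopt this $S^{1}$-index argument or supply a separate reason why your two $\mathbb{Z}_2$-minimax schemes yield disjoint critical sets.
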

\begin{proof}
First of all we notice that the functional $E$ satisfies the (PS) condition on the space $H_G:=H_{G}^{1}(M)\times H_{G}^{\frac{1}{2}}(\Sigma M)$,
where $H_{G}^{1}(M)$ and $H_{G}^{\frac{1}{2}}(\Sigma M)$ are respectively the subspaces of  $H^{1}(M)$ and $H^{\frac{1}{2}}(\Sigma M)$ which are invariant under the action of $G$. In order to prove this claim, let us consider $z_{n}\in H_{G}$ a (PS) sequence for $E$, then according to the characterization in Theorem (\ref{first}) above we have that
$$E(z_{n})=E(z_{\infty})+\sum_{k=1}^{m}c_{k}+o(1) ,$$
where $c_{k}=E_{\R^3}(Z_\infty^{k})\geq \tilde{Y}(S^{3})$, and $Z_\infty^{k}$ are solutions of equation (\ref{eqR3}) in $\R^3$. The main point is that the number of these solutions is finite and that the energy is finite. In particular if $\{z_{n}\}$ is a (PS) sequence that concentrates on $x^{1},\cdots, x^{m}$ then $z_{n}(h\cdot)$ concentrates at $h\cdot x^{1}, \cdots, h\cdot x^{m}$ for every $h\in G$. Now, since $z_{n}\in H_{G}$, then $z_{n}(h\cdot)=z_{n}$ hence $z_{n}$ concentrates at all the orbits of $x^{1},\cdots, x^{m}$ under the action of $G$; but the orbits are infinite: therefore the set of concentration needs to be empty and hence the (PS) condition holds.\\
Now we consider the functional $E:H_{G}\to \mathbb{R}$ defined by
$$E(u,\psi)=\frac{1}{2}\left(\int_{M}uL_{g}udv_g+\int_{M}\langle D_g\psi,\psi\rangle dv_g -\int_{M}|u|^{2}|\psi|^{2}dv_g\right).$$
We will study the restriction of this functional to the Nehari manifold $N_G$ defined by
$$N_G=\left\{\begin{array}{ll}(u,\psi)\in H_{G};\int_{M}uL_{g}udv=\int_{M}|u|^{2}|\psi|^{2}dv_g=\int_{M}\langle D_g\psi,\psi\rangle dv_g\neq0;\\
P^{-}(D_g\psi-I(\psi)|u|^{2}\psi)=0\end{array}\right\}.$$
As in the previous section, $N_G$ is a manifold, moreover critical points of $E_{|N_G}$ are critical points of $E$, as we saw above, and moreover any (PS) sequence of $E_{|N_G}$ is a (PS) sequence of $E$. Therefore, $E_{|N_G}$ satisfies the (PS) condition. So now we want to use the classical min-max theorem on the manifold $N_g$, so we define a collection $\mathcal{A}$ of sets $A\subset N_G$ such that $-A=A$ and
$$c_{k}=\inf_{A\in \mathcal{A};\gamma(A)\geq k}\max_{(u,\psi)\in A}E(u,\psi),$$
where $\gamma(A)$ denotes the genus of $A$. Now, if we can show that $N_G$ contains sets of arbitrarily high genus, we can show that we have infinitely many solutions. To this aim, we will prove that there exists a continuous $\mathbb{Z}_{2}$-equivariant map
$$\mathcal{T}:\{-1,1\}\times S^{+} \longrightarrow N_G,$$
where $S^{+}$ is the unit sphere of $H^{\frac{1}{2},+}$. First, we recall that the generalized Nehari manifold originates from considering the functional $R:\mathbb{R}^{+}\times \mathbb{R}^{+}\times H^{\frac{1}{2},-}\to \mathbb{R}$, defined by
$$R(t,s,\varphi)=E(tu,s(\psi+\varphi)).$$
Therefore, the nonzero critical points of $R$ are in $N_G$. Indeed, if such a critical point exists, then it satisfies
$$\left\{\begin{array}{lll}
s^{2}=\frac{\displaystyle\int_{M}uL_{g}udv_g}{\displaystyle\int_{M}|u|^{2}|\psi+\varphi|^{2}dv_g},\\
\\
t^{2}=\frac{\displaystyle\int_{M}\langle D_g(\psi+\varphi),\psi+\varphi)dv_g}{\displaystyle\int_{M}|u|^{2}|\psi+\varphi|^{2}dv_g},\\
\\
P^{-}(D_g(\psi+\varphi)-I(\psi+\varphi)|u|^{2}(\psi+\varphi))=0.\\
\end{array}
\right.
$$
Now, the main issue in solving this system resides in the last equation, which is equivalent to solving
$$T(\varphi)+B(\varphi)=A(\varphi),$$
where $$T(\varphi)=P^{-}(D_g\varphi-I(\psi)|u|^{2}\varphi),$$
$$B(\varphi)=P^{-}([I(\psi)-I(\psi+\varphi)]|u|^{2}(\psi+\varphi)),$$
and
$$A(\varphi)=P^{-}(D_g\psi-I(\psi)|u|^{2}\varphi).$$
Again, as in the previous section, the operator $T$ is invertible, so the term that we need to consider here is $B(\varphi)$. Now, we notice that for some particular choice of $u$ and $\psi$ we can always find a solution to this system. Indeed, if $u$ is constant and $\psi\in H^{\frac{1}{2},+}$, then we can take $\varphi=0$, so that we have a unique critical point of $R$ denoted by $(t_{(u,\psi)},s_{(u,\psi)},0)$ such that $(t_{(u,\psi)}u,s_{(u,\psi)}(\psi))\in N_G$. Therefore, we will consider the map $\mathcal{T}$ defined by $$\mathcal{T}(1,\psi)=(t_{(1,\psi)},s_{(1,\psi)}\psi),$$
where
$$t_{(1,\psi)}=\frac{1}{\|\psi\|_{L^{2}}},$$
and
$$s_{(1,\psi)}=\frac{\displaystyle\frac{1}{8}\int_{M}R_g dv_g}{\|\psi\|_{L^{2}}}.$$
Clearly $\mathcal{T}(-(\delta,\psi))=-\mathcal{T}(\delta,\psi)$ where $\delta=\pm 1$ and the map $\mathcal{T}$ is continuous. Now, since $\mathcal{T}$ is an equivariant map, and since $S^{+}$ has infinite genus, that is $\gamma(S^{+})=+\infty$, we have also that $N_G$ has infinite genus; moreover if $A\subset S^{+}$ is symmetric and such that $\gamma(A)=k$, then $\mathcal{T}(A)\subset N_G$ satisfies $\gamma(\mathcal{T}(A))\geq k$. Also since $E$ is bounded from below on $N_G$, we have by classical min-max argument, see \cite{Rab}, that $E_{|N_G}$ has infinitely many critical points, hence $E$ has infinitely many critical points.\\
Finally, in order to find another infinite family of solutions, we argue in a similar way, by noticing that the set $N_{G}$ is invariant under the action of $S^{1}$ on the spinorial part, defined by
$$\theta \cdot (u,\psi)=(u,e^{i2\pi \theta} \psi).$$
Clearly, $E_{|N_G}$ is also invariant under the this action of $S^{1}$. Therefore we can define the family of sets $\mathcal{K}$ by saying that a set $A$ belongs to $\mathcal{K}$ if and only if $ e^{i2\pi \theta}A=A$. We define also the min-max levels
$$\tilde{c}_{k}=\inf_{A\in \mathcal{K};i_{S^{1}}(A)\geq k}\max_{(u,\psi)\in A}E(u,\psi),$$
where $i_{S^{1}}$ is the Faddell-Rabinowitz cohomological index \cite{Fa}. Then, we use a restriction of the previous map $\mathcal{T}$, that we denote here by $\mathcal{G}:S^{+}\to N_G$, defined by
$$\mathcal{G}(\psi)=\mathcal{T}(1,\psi).$$
We see that $\mathcal{G}$ is $S^{1}$-equivariant, hence $i_{S^{1}}(N_G)=+\infty$ and hence, $E_{|N_G}$ has infinitely many critical points.
\end{proof}

\end{document}